\documentclass{amsart}

\usepackage{amsmath}
\usepackage{amssymb}
\usepackage{amsthm}
\usepackage{amsfonts}
\usepackage{dsfont}
\usepackage{color}
\usepackage{enumerate}
\usepackage[margin=38mm]{geometry}
\usepackage{graphicx}
\usepackage{braket}
\usepackage{tensor}

\newcommand{\reals}{\mathbb{R}}
\newcommand{\rationals}{\mathbb{Q}}
\newcommand{\complex}{\mathbb{C}}

\newcommand{\integers}{\mathbb{Z}}


\newcommand{\bracketc}[1]{\bigg[#1\bigg]}

\newcommand{\para}[1]{\left(#1\right)}
\newcommand{\paraa}[1]{\big(#1\big)}
\newcommand{\parab}[1]{\Big(#1\Big)}
\newcommand{\parac}[1]{\bigg(#1\bigg)}

\newcommand{\diag}{\operatorname{diag}}

\newcommand{\spacearound}[1]{\quad#1\quad}
\newcommand{\equivalent}{\spacearound{\Leftrightarrow}}
\renewcommand{\implies}{\spacearound{\Rightarrow}}

\newtheorem{theorem}{Theorem}[section]

\newtheorem{lemma}[theorem]{Lemma}
\newtheorem{proposition}[theorem]{Proposition}
\newtheorem{example}[theorem]{Example}
\theoremstyle{definition}
\newtheorem{definition}[theorem]{Definition}
\theoremstyle{remark}

\numberwithin{equation}{section}

\renewcommand{\emph}[1]{\textit{#1}}
\newcommand{\F}{\mathcal{F}}
\newcommand{\Fall}{F(\reals^D,\complex)}
\newcommand{\A}{\mathcal{A}}

\newcommand{\Ahq}{\A_{\hbar,q}}
\newcommand{\AhqD}{\A_{\hbar,q}^D}
\newcommand{\AhqDF}{\A_{\hbar,q}^D(\F)}

\renewcommand{\mid}{\mathds{1}}

\newcommand{\mathand}{\qquad\text{and}\qquad}
\newcommand{\qand}{\quad\text{and}\quad}

\newcommand{\Aut}{\operatorname{Aut}}
\newcommand{\Sh}{S_{\hbar}}
\newcommand{\Ssh}{\S_{\hbar}}
\newcommand{\ZD}{\integers^D}
\renewcommand{\S}{\mathcal{S}}

\newcommand{\Mat}{\operatorname{Mat}}
\newcommand{\Matbar}{\overline{\Mat}}

\newcommand{\Lt}{\tilde{\Lambda}}
\newcommand{\Et}{\tilde{E}}
\newcommand{\Rt}{\tilde{R}}
\newcommand{\Mt}{\tilde{M}}
\newcommand{\GL}{\operatorname{GL}}

\newcommand{\Shd}{\S^{\delta}_{\hbar}}

\newcommand{\deltat}{\tilde{\delta}}

\newcommand{\g}{\mathfrak{g}}

\title[On the construction of fuzzy spaces and modules over shift algebras]{On the construction of fuzzy spaces and\\ modules over shift algebras}

\author{Joakim Arnlind}
\address[Joakim Arnlind]{Dept. of Math.\\
Link\"oping University\\
581 83 Link\"oping\\
Sweden}
\email{joakim.arnlind@liu.se}

\author{Andreas Sykora}
\address[Andreas Sykora]{}
\email{syko@gelbes-sofa.de}

\thanks{}

\subjclass[2000]{}
\keywords{}

\begin{document}

\begin{abstract}
  We introduce shift algebras as certain crossed product algebras
  based on general function spaces and study properties, as well as
  the classification, of a particular class of modules depending on a
  set of matrix parameters. It turns out that the structure of these
  modules depends in a crucial way on the properties of the function
  spaces. Moreover, for a class of subalgebras related to compact
  manifolds, we provide a construction procedure for the corresponding
  fuzzy spaces, i.e. sequences of finite dimensional modules of
  increasing dimension as the deformation parameter tends to zero, as
  well as infinite dimensional modules related to fuzzy non-compact
  spaces.
\end{abstract}

\maketitle
\tableofcontents

\section{Introduction}

\noindent
Over the past decades, noncommutative geometry has emerged as a
crucial ingredient for physical theories describing the unification of
quantum mechanics and general relativity. At small length scales, or
high energies, there are good reasons to believe that space itself
becomes noncommutative (see
e.g. \cite{dfr:quantum.structure.spacetime}). For instance, through the
framework of spectral triples (\cite{c:ncgbook}) the standard
model of particles has been formulated in terms of noncommutative
geometry as the Spectral Standard Model
\cite{cc:spectral.action.principle}.

In parallel, the theory of ``Fuzzy spaces'' grew out of the quantization
of Membrane theory: a theory of quantum gravity built on the idea of
describing fundamental particles as two dimensional surfaces, rather
than one dimensional objects as in String theory. To quantize the
theory, one uses a regularization procedure where functions are
replaced by sequences of matrices (of increasing dimension), enabling
a straight-forward quantization (cf. \cite{h:phdthesis}).

Parallel to this, in string theory the so-called IKKT matrix model emerged,
which can be derived via a matrix regularization of the worldsheet action
functional \cite{ikkt:superstring} or can be seen as a compactification
of ten-dimensional super Yang-Mills theory to a point. Motivated by the IKKT
model and other matrix models, more complicated fuzzy spaces,
also in higher dimensions, were studied and used as physical models,
such as emergent gravity (\cite{s:emergent.intro, Steinacker_2014, 
Carow-Watamura:2004,Ramgoolam_2002,ss:covariant.spheres, Sperling_2019}). 

Hence, it becomes important to understand the geometry of such matrix
regularizations, as well as finding explicit examples. Although there
are general existence results (see e.g. \cite{bms:toeplitz}), showing
that one may find matrix regularizations for arbitrary (quantizable)
compact K\"ahler manifolds, it does not provide a deeper understanding
on how geometrical and topological properties of the manifold are
reflected in the regularization. Moreover, for a long time, the fuzzy
sphere and the fuzzy torus were more or less the only known explicit
examples.  In \cite{abhhs:noncommutative}, a one parameter class of
surfaces was considered, interpolating between spheres and tori
(including the singular point of topology change). For the first time,
this allowed for the explicit study of how geometrical deformation and
topology change affect the matrix regularization. It turned out the
topology change implied a drop in the dimension of the matrix
representation. By now there are many more examples of fuzzy spaces,
as well as a broader understanding of their geometry (see
e.g. \cite{s:membrane.topology,aht:spinning,s:emergent.intro,ahh:multilinear,s:fuzzy.construction.kit,ss:covariant.spheres,a:low.dimensional}. 

In many cases, matrix regularizations arise as finite dimensional
representations of a (noncommutative) algebra representing the
noncommutative space. For instance, this is the case in
\cite{abhhs:noncommutative}, where representations of an algebra
defined by cubic relations in the generators were considered. Fuzzy
spaces with a dimension higher than two can be constructed from
representations of finite dimensional Lie algebras, which are
interpreted as fuzzy homogeneous spaces.  For example, a fuzzy
4-hyperboloid is considered in \cite{Sperling_2019} and a "squashed"
fuzzy hyperboloid is interpreted as space time in
\cite{Steinacker_2018}. One of the main motivations for this paper is to
provide a simple way of constructing representations of fuzzy spaces
with a dimension higher than two.

In this paper, we study a class of \emph{shift algebras} defined as
twisted crossed product algebras via the action of $\integers^D$ on a
function algebra $\F$ consisting of complex valued functions on
$\reals^D$. If $\F$ is chosen to be the algebra of continuous functions,
then one recovers a crossed product algebra related to the
noncommutative cylinder (cf. \cite{vs:nc.cylinder}) along with a
particular choice of cocycle, in analogy with the algebra
constructed in \cite{al:projectors.cylinder}. However, for other
choices of $\F$, one obtains fundamentally different algebras such as
the noncommutative torus, when $\F$ is chosen to be the algebra of
constant functions.  In particular, we are interested in classes of
modules over shift algebras and a construction giving rise to fuzzy
spaces; i.e. sequences of finite dimensional modules of increasing
dimension as the deformation parameter $\hbar$ tends to zero.

The paper is organized as follows: In Section~\ref{sec:shift.algebras}
we define shift algebras and present a few results on isomorphisms for
different choices of
parameters. Section~\ref{sec:modules.shift.algebras} introduces a
class of modules, depending on a set of (matrix) parameters,
generalizing the construction in \cite{al:projectors.cylinder}, and
studies isomorphism classes with respect to different choices of
parameters. Moreover, for a subclass of modules, we classify all
simple modules in the case when the function algebra separates points.
In Section~\ref{sec:rep.subalgebras} we study modules over a general
class of subalgebras and present a construction procedure to generate
fuzzy spaces related to compact manifolds as well as non-compact manifolds.

In Section \ref{sec:shift.subalgebras.one.dimension} we consider
one-dimensional shift subalgebras and motivate that these algebras
relate to two-dimensional fuzzy spaces. We explicitly show that the
fuzzy sphere, a fuzzy catenoid and the fuzzy plane can be reproduced
with one-dimensional shift subalgebras. Section
\ref{sec:shift.subalgebras.two.dimensions} provides examples
illustrating the construction of fuzzy spaces with higher dimensional
shift subalgebras. In particular, we construct shift subalgebras which
can be interpreted as compact and non-compact four-dimensional level
sets immersed into $\reals^6$.

Section \ref{sec:shift.subalgebras.higher.dimensions} is devoted to
higher dimensional shift subalgebras related to Lie algebras. We show
that any Lie algebra with a finite dimensional representation can be
used to define elements in a shift subalgebra satisfying the
commutation relations of the Lie algebra.  The corresponding modules
can be interpreted as fuzzy homogeneous spaces.

\section{Shift algebras}\label{sec:shift.algebras}

\noindent
Let us recall some basic facts about crossed product algebras. Thus,
let $\mathcal{A}$ be a $\ast$-algebra and let $G$ be a discrete group
(of either finite or countably infinite cardinality). Given a group
action $\alpha: G \rightarrow\Aut(\mathcal{A})$, one can form the
crossed product algebra $\mathcal{A}\rtimes_\alpha G$. Moreover, given
a normalized $2$-cocyle $\omega:G\times G\to U(1)$, satisfying
\begin{align}\label{eq:cocycle.def} 
  &\omega_{g,h}\omega_{gh,k}=\omega_{g,hk}\omega_{h,k}\qquad
    \omega_{g,e}=\omega_{e,g} = 1
\end{align}
for $g,h,k\in G$, one can form the twisted crossed product algebra
$\A\rtimes_{\alpha,\omega} G$.  As a vector space, the twisted crossed product
algebra $\mathcal{A}\rtimes_{\alpha,\omega} G$ is defined as the set
of functions $G \rightarrow \mathcal{A}$ with compact support. When
the group is discrete, one may identify this set with the group ring
$\A[G]$ and every element $a\in\A\rtimes_{\alpha,\omega} G$ is written
as a formal (finite) linear combination
\begin{align}\label{eq:crossed.prod.element}
  a=\sum_{g\in G} a_gg
\end{align}
with $a_g\in\A$.  Multiplication is then defined by
\begin{align}\label{cpa_prod}
  a\cdot b &= \parab{ \sum_{g\in G} a_g g }\cdot \parab{ \sum_{h\in G} b_h h } =
  \sum_{g, h\in G} a_g \alpha_g(b_h) \omega_{g,h} gh
\end{align}
which may be written as
\begin{align*}
  a\cdot b &=\sum_{g\in G}\bracketc{\sum_{h\in G}a_h\alpha_h\paraa{b_{h^{-1}g}}\omega_{h,h^{-1}g}}g
  = \sum_{g\in G}\bracketc{\sum_{h\in G}a_{gh^{-1}}\alpha_{gh^{-1}}\paraa{b_{h}}\omega_{gh^{-1},h}}g.
\end{align*}
Furthermore, the algebra $\mathcal{A}\rtimes_{\alpha,\omega} G$ has an
involution defined by
\begin{align*} 
  a^\ast=\parab{ \sum_{g\in G} a_g g }^\ast
  = \sum_{g\in G}(\omega_{g^{-1},g})^{-1} \alpha_{g}(a_{g^{-1}}^\ast) g.
\end{align*} 
In the following, we will be interested in twisted crossed product
algebras where $\A$ is a subalgebra $\F$ of the $\ast$-algebra $\Fall$
of complex valued functions on $\reals^D$, and the group $G$ is chosen
to be $\ZD$. For $f\in\Fall$ and $\hbar>0$, define
$S_{\hbar,k}:\Fall\to\Fall$ as
\begin{align*}
  (S_{\hbar,k}f)(u_1,\ldots,u_D) = f(u_1,\ldots,u_k+\hbar,\ldots,u_D),
\end{align*}
and, for $\lambda\in\reals$, set
\begin{align*}
  (T_\lambda f)(u_1,\ldots,u_D)=f(\lambda u_1,\ldots,\lambda u_D).  
\end{align*}

\noindent 
The operators $S_{\hbar,k}$ allow one to define a group action
$\Sh:\ZD\to \Aut(\Fall)$ as
\begin{align*}
  \paraa{\Sh(k)f}(u_1,\ldots,u_D)
  &= \paraa{S^{k_1}_{\hbar,1}S^{k_2}_{\hbar,2}\cdots S^{k_D}_{\hbar,D}f}
    (u_1,\ldots,u_D)\\
  &=f(u_1+k_1\hbar,u_2+k_2\hbar,\ldots,u_D+k_D\hbar)
\end{align*}
for $k=(k_1,\ldots,k_D)\in\ZD$, and it is easy to check that $\Sh(k)$
is a $\ast$-automorphism fulfilling $\Sh(k)\Sh(l)=\Sh(k+l)$ for
$k,l\in\ZD$. For notational convenience we will write
$\Sh^k\equiv\Sh(k)$. Let us now introduce a 2-cocycle on $\ZD$.  For
$k=(k_1,\ldots,k_D),l=(l_1,\ldots,l_D)\in\ZD$ set
\begin{align}\label{eq:N.cocycle.def}
  N(k,l)= \sum_{m=2}^D\sum_{n=1}^{m-1}k_m l_n,
\end{align}
with the convention that $N(k,l)=0$ for $D=1$, and for arbitrary
$q\in\complex$ with $|q|=1$ define
\begin{align*}
  \omega(k,l) = q^{N(k,l)}.
\end{align*}
Since $N(k,l)$ is linear in both arguments, it follows immediately
that $\omega$ fulfills the 2-cocycle condition
\eqref{eq:cocycle.def}. For later convenience, we extend the
definition of $N$ to $\reals^D\times\reals^D$ using
\eqref{eq:N.cocycle.def} for $k,l\in\reals^D$.

\begin{definition}
  A $\ast$-subalgebra $\F\subseteq\Fall$ is called
  \emph{$\hbar$-invariant} if $S_{\hbar}^kf\in\F$ for all
  $k\in\integers^D$ and $f\in\F$.
\end{definition}

\noindent
In the following definition, we introduce type of algebras that will
be studied throughout the paper.

\begin{definition}\label{def:shift_alg}
  Let $\hbar>0$, $q\in\complex$ with $|q|=1$, and let $\F$ be a
  $\hbar$-invariant $\ast$-subalgebra of $\Fall$. The
  \emph{shift algebra $\AhqDF$} is defined as
  \begin{align*}
    \AhqDF = \F \rtimes_{S_\hbar,\omega} \ZD.
  \end{align*}
\end{definition}

\noindent
Note that when the function space $\F$ consists of continuous
functions, the construction above is similar to that of the
noncommutative cylinders found in \cite{vs:nc.cylinder} and
\cite{al:projectors.cylinder}. However, due to the freedom in choosing
the function algebra $\F$, Definition~\ref{def:shift_alg} also contains
compact manifolds, such as the noncommutative torus (when $\F$ is the
algebra of constant functions).

Following the notation in \eqref{eq:crossed.prod.element}, we 
write a generic element $f\in\AhqDF$ as
\begin{align*}
  f = \sum_{k\in\ZD}f_kU^k
  =\hspace{-4mm}\sum_{\,\,(k_1,\ldots,k_D)\in\ZD} f_{k_1\cdots k_D}
  U_1^{k_1}\cdots U_D^{k_D}
\end{align*}
with $U_1,\ldots,U_D$ being (multiplicative) generators of $\ZD$. To
simplify the notation, we shall often omit the explicit summation
symbol and assume that any repeated index is implicitly summed over
unless otherwise stated.  The algebra product is given as
\begin{align*}
  \parac{\sum_{k\in\ZD}f_kU^k}\parac{\sum_{l\in\ZD}g_lU^l}
  &= \sum_{k,l\in\ZD}q^{N(k,l)}f_k\paraa{\Sh^kg_l}U^{k+l}
  =\!\!\!\!\!\sum_{k,n\in\integers^D}q^{N(k,n-k)}f_k(\Sh^kg_{n-k})U^n.
\end{align*}
Note that in the case when $u_i\in \F$ (for $i=1,2,\ldots,D$) the
above product is induced by the relations
\begin{align*}
  &U_jU_i=qU_iU_j\quad\text{ (for $j>i$)}
  & &U_iu_j=u_jU_i\quad\text{ (for $i\neq j$)}\\
  &u_iu_j = u_ju_i& &U_iu_i = (u_i+\hbar)U_i
\end{align*}
for $i,j=1,2,\ldots,D$, perhaps providing a more intuitive
understanding of the product and the terminology \emph{shift algebra}. The
involution is computed as
\begin{align} \label{alg_involution}
  \parab{\sum_{k\in\ZD}f_kU^k}^\ast=
  \sum_{k\in\ZD}q^{N(k,k)}\paraa{\Sh^{k}\bar{f}_{-k}}U^{k},
\end{align}
where the bar denotes complex conjugation, implying that
$U_i^\ast=U_i^{-1}$ and $f^\ast=\bar{f}$.  Moreover, for convenience,
we introduce $u=(u_1,\ldots,u_D)$ and for $f\in\Fall$ we write
$f(u)=f(u_1,\ldots,u_D)$.

The shift algebra $\AhqDF$ depends on a choice of the parameters
$\hbar$ and $q$. In the following we show that if the function algebra
$\F$ has certain properties, then the shift algebras for different
choices of parameters are isomorphic. First, let us show that if $\F$
is scale invariant then the value of $\hbar$ is irrelevant.

\begin{proposition}\label{prop:Ahq.h.isomorphic}
  Let $\A_{\hbar_1,q}^D(\F)$ and $\A_{\hbar_2,q}^D(\F)$ be shift
  algebras. If $T_\lambda f\in\F$ for all $f\in\F$ and
  $\lambda\in\reals$ then $\A_{\hbar_1,q}^D(\F)\simeq\A_{\hbar_2,q}^D(\F)$.
\end{proposition}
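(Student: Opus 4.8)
The plan is to construct an explicit $\ast$-isomorphism out of the rescaling operators $T_\lambda$, choosing the scaling parameter so as to convert $\hbar_1$-shifts into $\hbar_2$-shifts. Concretely, set $\lambda=\hbar_1/\hbar_2$, which is a nonzero positive real number. The first step is to record the intertwining relation
\begin{align*}
  T_\lambda S_{\hbar_1,i}=S_{\hbar_2,i}T_\lambda\qquad(i=1,\ldots,D)
\end{align*}
on $\Fall$, which is immediate from evaluating both sides on $f\in\Fall$: each maps $f$ to the function $(u_1,\ldots,u_D)\mapsto f(\lambda u_1,\ldots,\lambda u_i+\hbar_1,\ldots,\lambda u_D)$, where the identity $\lambda\hbar_2=\hbar_1$ is used. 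Composing these relations gives $T_\lambda S_{\hbar_1}^k=S_{\hbar_2}^kT_\lambda$ for every $k\in\ZD$. Together with the elementary facts that $T_\lambda$ is linear, multiplicative, compatible with complex conjugation ($\overline{T_\lambda f}=T_\lambda\bar f$) and invertible with inverse $T_{1/\lambda}$, and that $T_\lambda$ preserves $\F$ by hypothesis, these are all the properties that will be needed.

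Next I would define $\Phi\colon\A_{\hbar_1,q}^D(\F)\to\A_{\hbar_2,q}^D(\F)$ by
\begin{align*}
  \Phi\parac{\sum_{k\in\ZD}f_kU^k}=\sum_{k\in\ZD}\para{T_\lambda f_k}U^k,
\end{align*}
where on the right $U^k$ denotes the generators of $\A_{\hbar_2,q}^D(\F)$. By the hypothesis $T_\lambda f_k\in\F$, so $\Phi$ is well defined, and it is clearly a linear bijection with inverse $\sum_k g_kU^k\mapsto\sum_k\para{T_{1/\lambda}g_k}U^k$. To see that $\Phi$ is an algebra homomorphism one substitutes the product formula: applying $\Phi$ to $\sum_{k,l}q^{N(k,l)}f_k\para{S_{\hbar_1}^kg_l}U^{k+l}$ and using multiplicativity of $T_\lambda$ together with the intertwining relation turns the $(k,l)$-term into $q^{N(k,l)}\para{T_\lambda f_k}\para{S_{\hbar_2}^kT_\lambda g_l}U^{k+l}$, which is exactly the $(k,l)$-term of $\Phi\para{\sum f_kU^k}\,\Phi\para{\sum g_lU^l}$; note that the cocycle factor $q^{N(k,l)}$ is untouched since $N$ and $q$ do not depend on $\hbar$. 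Compatibility with the involution \eqref{alg_involution} is verified in the same way, using $\overline{T_\lambda f}=T_\lambda\bar f$ and $T_\lambda S_{\hbar_1}^k=S_{\hbar_2}^kT_\lambda$. Hence $\Phi$ is a $\ast$-isomorphism.

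Since every step reduces to a one-line check, there is no serious obstacle here; the only points demanding a little care are taking the scaling parameter to be $\hbar_1/\hbar_2$ (and not its reciprocal) in the intertwining relation, and noticing that the cocycle $q^{N(\cdot,\cdot)}$, being $\hbar$-independent, drops out of the computation entirely. One could also remark that scale invariance of $\F$ together with $\hbar_1$-invariance already forces $\hbar_2$-invariance, via $S_{\hbar_2}^k=T_\lambda S_{\hbar_1}^kT_{1/\lambda}$, so the standing assumption that both shift algebras are defined is in fact subsumed by the scale-invariance hypothesis.
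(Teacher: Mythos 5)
Your proposal is correct and follows essentially the same route as the paper: both define the map $\sum_k f_kU^k\mapsto\sum_k(T_{\hbar_1/\hbar_2}f_k)U^k$ and verify it is a $\ast$-isomorphism using the intertwining relation $T_{\hbar_1/\hbar_2}S_{\hbar_1}^k=S_{\hbar_2}^kT_{\hbar_1/\hbar_2}$. The only addition beyond the paper's argument is your closing remark that scale invariance of $\F$ together with $\hbar_1$-invariance already forces $\hbar_2$-invariance; this is a correct and mildly useful observation, but it does not change the substance of the argument.
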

\begin{proof}
  The algebras $\A_{\hbar_1,q}^D$ and $\A_{\hbar_2,q}^D$ consist of
  the same underlying vector space, however with two different
  products which we denote by $\cdot_{\hbar_1}$ and
  $\cdot_{\hbar_2}$. Next, let us define a map
  $\phi:\A_{\hbar_1,q}^D\to \A_{\hbar_2,q}^D$ by setting
  \begin{align*}
    \phi\parab{\sum_{k\in\ZD}f_kU^k} =
    \sum_{k\in\ZD}\paraa{T_{\hbar_1/\hbar_2}f_k}U^k,
  \end{align*}
  which is clearly linear and invertible (and well-defined since
  $T_{\hbar_1/\hbar_2}f_k\in \F$ by assumption), with
  \begin{align*}
    \phi^{-1}\parab{\sum_{k\in\ZD}f_kU^k} =
    \sum_{k\in\ZD}\paraa{T_{\hbar_2/\hbar_1}f_k}U^k.
  \end{align*}
  Now, let us prove that $\phi$ is an algebra homomorphism. First,
  compute (implicitly assuming summation over $k$ and $l$)
  \begin{align*}
    \phi\paraa{f_kU^k\cdot_{\hbar_1}g_lU^l}
    &= \phi\paraa{q^{N(k,l)}f_k(S_{\hbar_1}^kg_l)U^{k+l}}
      =q^{N(k,l)}(T_{\hbar_1/\hbar_2}f_k)
      \paraa{T_{\hbar_1/\hbar_2}S^k_{\hbar_1}g_l}U^{k+l}.
  \end{align*}
  Next, compute
  \begin{align*}
    \phi(f_kU^k)\cdot_{\hbar_2}\phi(g_lU^l)
    &= (T_{\hbar_1/\hbar_2}f_k)U^k\cdot_{\hbar_2}(T_{\hbar_1/\hbar_2}g_l)U^l\\
      &= q^{N(k,l)}(T_{\hbar_1/\hbar_2}f_k)
        \paraa{S_{\hbar_2}^kT_{\hbar_1/\hbar_2}g_l}U^{k+l}
        =\phi\paraa{f_kU^k\cdot_{\hbar_1}g_lU^l},
  \end{align*}
  by using that
  $S^k_{\hbar_2}T_{\hbar_1/\hbar_2}=T_{\hbar_1/\hbar_2}S^k_{\hbar_1}$. Furthermore,
  in a similar way, one can check that $\phi$ is a $\ast$-algebra
  homomorphism, showing that $\phi$ is indeed a $\ast$-algebra
  isomorphism.
\end{proof}

\noindent
Similarly, one can show that if $\F$ contains complex exponential functions,
then the shift algebras are isomorphic for all values of $q$.

\begin{proposition}\label{prop:q.isomorphism}
  Let $\A_{\hbar,q}(\F)$ be a shift algebra such that
  $e^{i\lambda\cdot u}f\in\F$ for all $f\in\F$ and
  $\lambda\in\reals^D$. Then
  $\A_{\hbar,q}^D(\F)\simeq\A^D_{\hbar,1}(\F)$.
\end{proposition}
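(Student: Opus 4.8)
The plan is to mimic the strategy of Proposition~\ref{prop:Ahq.h.isomorphic}: keep the same underlying vector space and define a linear bijection $\psi:\A_{\hbar,q}^D(\F)\to\A_{\hbar,1}^D(\F)$ that intertwines the two products. The key observation is that the cocycle $\omega(k,l)=q^{N(k,l)}$ differs from the trivial one by a coboundary, and that a coboundary can be absorbed by a (position-dependent) rescaling of the generators $U^k$. Concretely, write $q=e^{i\theta}$ for some $\theta\in\reals$, and look for a family of functions $c_k\in\F$, $k\in\ZD$, with $|c_k|\equiv 1$, such that the map $\psi\paraa{\sum_k f_k U^k}=\sum_k c_k f_k U^k$ is an algebra homomorphism. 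Expanding $\psi(f_kU^k)\cdot_{1}\psi(g_lU^l)$ and comparing with $\psi(f_kU^k\cdot_q g_lU^l)$, the homomorphism condition reduces to the functional equation
\begin{align*}
  c_k\,(\Sh^k c_l) = q^{N(k,l)}\, c_{k+l},
\end{align*}
which says precisely that $k\mapsto c_k$ trivializes the cocycle $q^{N(k,l)}$ relative to the $\ZD$-action $\Sh$.

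The second step is to solve this functional equation explicitly using the hypothesis $e^{i\lambda\cdot u}f\in\F$. Since $N(k,l)=\sum_{m>n}k_m l_n$ involves the ``lower'' coordinates of $l$ weighted by the ``upper'' coordinates of $k$, and since $\Sh^k$ shifts $u_n\mapsto u_n+k_n\hbar$, a natural ansatz is
\begin{align*}
  c_k(u) = \exp\parac{\frac{i\theta}{\hbar}\sum_{m=2}^D\sum_{n=1}^{m-1} k_m\, u_n}
  = \exp\parac{\frac{i\theta}{\hbar}\,N(k,u)},
\end{align*}
where in the exponent $N(k,u)$ uses the extension of $N$ to $\reals^D\times\reals^D$ mentioned in the text. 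Each $c_k$ is a unit-modulus complex exponential in $u$, hence $c_k f\in\F$ for all $f\in\F$ by assumption, so $\psi$ is well-defined; it is visibly linear and bijective with inverse $\sum_k f_k U^k\mapsto\sum_k \bar c_k f_k U^k$. One then checks the functional equation: $\Sh^k c_l(u)=c_l(u+k\hbar)=c_l(u)\exp\paraa{\tfrac{i\theta}{\hbar}N(l,k\hbar)}=c_l(u)q^{N(l,k)}$, and $c_k(u)c_l(u)=\exp\paraa{\tfrac{i\theta}{\hbar}N(k+l,u)}=c_{k+l}(u)$ by bilinearity of $N$ in the first slot, so $c_k(\Sh^k c_l)=q^{N(l,k)}c_{k+l}$. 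This is off from what we want by $q^{N(l,k)}$ versus $q^{N(k,l)}$; one fixes this by either symmetrizing the ansatz (replacing $N(k,u)$ by $\tfrac12\paraa{N(k,u)-N(u,k)}$, the antisymmetric part, whose appearance of $N(k,l)$ in the identity comes with the correct combination since $N(k,l)+N(l,k)$ can be rewritten via the product rule for $N$) or, more simply, by noting that $N(k,l)+N(l,k)=\paraa{\sum_i k_i}\paraa{\sum_i l_i}-\sum_i k_i l_i$, so that $q^{N(l,k)}$ and $q^{N(k,l)}$ differ by a genuine (position-independent) $2$-coboundary $b_kb_l/b_{k+l}$ with $b_k=q^{-\frac12\paraa{(\sum_i k_i)^2-\sum_i k_i^2}}$, which can be folded into the constant prefactor of $c_k$. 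Carrying either correction through gives exactly $c_k(\Sh^k c_l)=q^{N(k,l)}c_{k+l}$.

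With the functional equation verified, the computation that $\psi$ is a homomorphism is then a short substitution identical in spirit to the one in Proposition~\ref{prop:Ahq.h.isomorphic}:
\begin{align*}
  \psi(f_kU^k)\cdot_{1}\psi(g_lU^l)
  &= c_kf_k\,(\Sh^k(c_lg_l))\,U^{k+l}
   = c_k(\Sh^k c_l)\,f_k(\Sh^k g_l)\,U^{k+l}\\
  &= q^{N(k,l)}c_{k+l}\,f_k(\Sh^k g_l)\,U^{k+l}
   = \psi\paraa{q^{N(k,l)}f_k(\Sh^k g_l)U^{k+l}}
   = \psi\paraa{f_kU^k\cdot_q g_lU^l},
\end{align*}
extending to all elements by linearity. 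Finally, one checks compatibility with the involutions using \eqref{alg_involution}: since $c_k$ has unit modulus, $\overline{c_k}=c_k^{-1}=c_{-k}\cdot(\text{const})$, and the same coboundary bookkeeping that made $\psi$ multiplicative makes it $\ast$-preserving; this is the one slightly fiddly point, because the constant prefactors $b_k$ interact with the $q^{N(k,k)}$ and $\Sh^k$ appearing in the involution formula, and one must confirm they cancel. I expect that last $\ast$-check — getting all the $q$-power bookkeeping from the coboundary to cancel consistently in the involution — to be the main obstacle; everything else is a direct transcription of the argument already given for the $\hbar$-independence.
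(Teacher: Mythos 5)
Your strategy --- rescale the generators by unit-modulus functions $c_k\in\F$ so as to trivialize the cocycle relative to the action $\Sh$ --- is precisely the one the paper uses, and the functional equation $c_k\,(\Sh^k c_l)=q^{N(k,l)}\,c_{k+l}$ that you derive is the correct homomorphism condition. The gap is in solving it. Your ansatz puts the group variable in the \emph{first} slot of $N$, i.e.\ $c_k(u)=e^{i\theta N(k,u)/\hbar}$, and as you correctly compute this gives $c_k(\Sh^k c_l)=q^{N(l,k)}c_{k+l}$, off by a swap of $k$ and $l$. Neither of your proposed repairs closes this gap. The constant-coboundary repair fails for a structural reason: the needed correction is $q^{N(k,l)-N(l,k)}$, and $N(k,l)-N(l,k)$ is antisymmetric in $(k,l)$, whereas any factor $b_kb_l/b_{k+l}$ is symmetric; indeed with your specific $b_k=q^{-\frac12((\sum_ik_i)^2-\sum_ik_i^2)}$ one finds $b_kb_l/b_{k+l}=q^{N(k,l)+N(l,k)}$, so the modified product produces $q^{N(k,l)+2N(l,k)}$, not $q^{N(k,l)}$. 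The symmetrized ansatz with exponent proportional to $\tfrac12\paraa{N(k,u)-N(u,k)}$ also fails on its own: direct computation gives $c_k(\Sh^kc_l)=q^{(N(l,k)-N(k,l))/2}\,c_{k+l}$, still not the required factor. (Combining both repairs would work, but the sign bookkeeping has to be redone from scratch.)

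The clean fix is simply to put $u$ in the \emph{second} slot of your ansatz rather than the first. With $q=e^{i\hbar\tau}$ set $c_k(u)=e^{i\tau N(u,k)}$. Since $\Sh^k$ shifts the first argument of $N$ and $N$ is bilinear, one gets
\begin{align*}
  \Sh^kc_l(u)=e^{i\tau N(u+k\hbar,l)}=c_l(u)\,q^{N(k,l)},\qquad
  c_k(u)c_l(u)=e^{i\tau N(u,k+l)}=c_{k+l}(u),
\end{align*}
so $c_k(\Sh^kc_l)=q^{N(k,l)}c_{k+l}$ holds on the nose, with no coboundary and no extra $q$-powers to track. This is precisely the map $\phi_q(f_kU^k)=f_ke^{i\tau N(u,k)}U^k$ used in the paper. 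The $\ast$-compatibility that you flag as the ``main obstacle'' is then routine via \eqref{alg_involution}: here $\overline{c_k}=c_{-k}$ with no constant prefactor, and the shift inside the $\ast_1$-involution applied to $c_{-k}$ produces exactly the factor $q^{N(k,k)}$ appearing in the $\ast_q$-formula.
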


\begin{proof}
  For $q=e^{i\hbar\tau}$ (with $\tau\in\reals$)
  define $\phi_q:\A^D_{\hbar,q}\to\A^D_{\hbar,1}$ as
  \begin{align*}
    \phi_q(f_kU^k) = f_ke^{i\tau N(u,k)}U^k
  \end{align*}
  Note that $\phi_q(f)\in\A^D_{\hbar,1}$ since, by assumption,
  $e^{i\lambda\cdot u}f_k\in\F$ for $\lambda\in\reals^D$. Furthermore,
  $\phi_q$ is clearly invertible. Let us now show that $\phi_q$ is an
  algebra homomorphism. To this end, we denote the products in
  $\A^D_{\hbar,q}$ and $\A^D_{\hbar,1}$ by $\cdot_q$ and $\cdot_1$,
  respectively.
  \begin{align*}
    \phi_q\paraa{f_kU^k\cdot_{q} g_lU^l}
    &= \phi_q\paraa{q^{N(k,l)}f_k(\Sh^k g_l)U^{k+l}}
      = q^{N(k,l)}e^{i\tau N(u,k+l)}f_k(\Sh^k g_l)U^{k+l}\\
    \phi_q(f_kU^k)\cdot_{1}\phi(g_lU^l)
    &= f_ke^{i\tau N(u,k)}(\Sh^kg_l)e^{i\tau N(u+k\hbar,l)}U^{k+l}\\
    &=e^{i\hbar\tau N(k,l)}e^{i\tau N(u,k+l)}f_k(\Sh^kg_l)U^{k+l}
    = \phi_q\paraa{f_kU^k\cdot_{q} g_lU^l}.
  \end{align*}
  Now, let us show that $\phi_q$ is also a $\ast$-homomorphism,
  denoting the involutions by $\ast_q$ and $\ast_1$, respectively:
  \begin{align*}
    \phi_q\paraa{(f_kU^k)^{\ast_q}}
    &=\phi_q\paraa{q^{N(k,k)}\Sh^k(\bar{f}_{-k})U^k}
      =q^{N(k,k)}e^{i\tau N(u,k)}\Sh^k(\bar{f}_{-k})U^k\\
    \phi_q(f_kU^k)^{\ast_1}
    &=\paraa{f_ke^{i\tau N(u,k)}U^k}^{\ast_1}
      = \Sh^k(\bar{f}_{-k})e^{i\tau N(u+k\hbar,k)}U^k\\
      &= e^{i\hbar\tau N(k,k)}e^{i\tau N(u,k)}\Sh^k(\bar{f}_{-k})U^k
        =\phi_q\paraa{(f_kU^k)^{\ast_q}},
  \end{align*}
  and we conclude that $\phi_q$ is a
  $\ast$-algebra isomorphism.
\end{proof}

\noindent
A particular example for which the above result does not apply is
when $\F$ is chosen to be the $\ast$-subalgebra of $\Fall$ consisting
of constant functions. For $D=2$ one then recovers the algebra of the
noncommutative torus, for which it is well-known that there is a family of
pairwise non-isomorphic algebras for different choices of $q$.

\section{Modules over shift algebras}\label{sec:modules.shift.algebras}

\noindent
Let us now introduce a class of left $\AhqDF$-modules. These modules
provide a generalization of the modules constructed in
\cite{al:projectors.cylinder} for the noncommutative cylinder, and
there are several novel results which may be applied to that case as
well.  As vector spaces, these modules consist of subspaces of
$F(\reals^D\times\integers^D,\complex)$ (the space of complex valued
functions on $\reals^D\times\integers^D$). However, one needs a few
assumptions on $\S$ which we present in the following definition.

\begin{definition}
  Let $\F$ be a $\ast$-subalgebra of $\Fall$. A subspace
  $\S\subseteq F(\reals^D\times\integers^D,\complex)$ is called
  \emph{$\F$-invariant} if $\xi(x,k)\in\S$ implies that
  $f(x)\xi(x,k)\in\S$, $\xi(x+\lambda,k+r)\in\S$, and
  $e^{i\lambda\cdot x}\xi(x,k)\in\S$ for all $\lambda\in\reals^D$,
  $r\in\integers^D$ and $f\in\F$.
\end{definition}

\noindent
Depending on the context (and the choice of $\F$), one can for
instance let $\S$ be the space of continuous functions, or simply let
$\S=F(\reals^D\times\integers^D,\complex)$. In the next result, a
class of left $\AhqDF$-modules is introduced, parametrized by a set of
real and integer matrices.
 
\begin{proposition}\label{prop:left.module}
  Let $\AhqDF$ be a shift algebra and let
  $\S\subseteq F(\reals^D\times\integers^D)$ be a $\F$-invariant
  subspace. If $\Lambda_0,\Lambda_1,E\in\Mat_{D}(\reals)$,
  $R\in\Mat_D(\integers)$ and $\delta\in\reals^D$ such that
  $\Lambda_0E+\Lambda_1R=\mid$, then $\S$ is a left
  $\AhqDF$-module with the module structure defined by
  \begin{align*}
    (f\xi)(x,k)=\sum_{n\in\integers^D}
    q^{N(\Phi(x,k,\delta),n)}
    f_n\paraa{\Phi(x,k,\delta)\hbar}    
    \xi(x+En,k+Rn)
  \end{align*}
  for $f=f_kU^k\in\AhqDF$, $\xi\in\S$ and
  $\Phi(x,k,\delta)=\Lambda_0x+\Lambda_1k+\delta$.
\end{proposition}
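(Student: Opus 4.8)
The statement to verify is that the given formula defines a genuine left module structure, so there are two things to check: (i) that $(f\xi)(x,k)$ actually lies in $\S$ whenever $f\in\AhqDF$ and $\xi\in\S$, and (ii) that $(f\cdot g)\xi = f(g\xi)$ for all $f,g\in\AhqDF$ and $\xi\in\S$ (linearity in $f$ and $\xi$ being manifest from the formula). Since everything is defined on monomials $f=f_kU^k$ and extended linearly, it suffices to work with monomials throughout.

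For (i), I would fix a monomial $f_nU^n$ and observe that the map $\xi\mapsto (f_nU^n)\xi$ is built from three operations that $\F$-invariance is tailored to absorb: multiplication by the function $x\mapsto f_n(\Phi(x,k,\delta)\hbar)$, the shift $\xi(x,k)\mapsto\xi(x+En,k+Rn)$ in the $\reals^D\times\integers^D$ argument, and multiplication by the phase $q^{N(\Phi(x,k,\delta),n)}$. The only subtlety is that $f_n\circ(\hbar\Phi(\cdot,k,\delta))$ and the phase are functions of $k$ as well as $x$; but for each fixed value of the integer argument they are honest elements of (scalar multiples of compositions of) $\F$ applied to affine reparametrizations of $x$, and since $\Phi(x,k,\delta)\hbar = \hbar\Lambda_0 x + (\text{const in }x)$, the requirement ``$f(x)\xi(x,k)\in\S$'' together with the translation-invariance ``$\xi(x+\lambda,k+r)\in\S$'' and the exponential-invariance ``$e^{i\lambda\cdot x}\xi(x,k)\in\S$'' (noting $q^{N(\Phi(x,k,\delta),n)}=q^{N(\Lambda_0 x,n)}\cdot(\text{const}) = e^{i\theta\,\lambda(n)\cdot x}\cdot(\text{const})$ for suitable real $\lambda(n)$ since $|q|=1$ and $N(\cdot,n)$ is linear) is exactly what closes the argument. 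This step is essentially bookkeeping.

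The substance is (ii), and here the strategy is a direct computation comparing both sides on monomials. Writing $f=f_kU^k$ and $g=g_lU^l$, the product in $\AhqDF$ is $f\cdot g = q^{N(k,l)}f_k(\Sh^k g_l)U^{k+l}$, so the left-hand side $\big((f\cdot g)\xi\big)(x,m)$ unfolds to a single sum over the relevant index with a phase $q^{N(k,l)+N(\Phi(x,m,\delta),k+l)}$, the function factor $f_k(\hbar\Phi)\cdot g_l(\hbar\Phi + \hbar k)$ (the $\hbar k$ coming from $\Sh^k$), and the shifted argument $\xi(x+E(k+l), m+R(k+l))$. For the right-hand side $\big(f(g\xi)\big)(x,m)$ one applies the module map for $g$ first and then for $f$: the inner application produces $\xi(x+El, m+Rl)$ with phase $q^{N(\Phi(x,m,\delta),l)}$ and function factor $g_l(\hbar\Phi(x,m,\delta))$, and then the outer application of $f_kU^k$ re-evaluates $x\mapsto x+Ek$, $m\mapsto m+Rk$ everywhere, producing phase $q^{N(\Phi(x,m,\delta),k)}\cdot q^{N(\Phi(x+Ek,m+Rk,\delta),l)}$ and function factor $f_k(\hbar\Phi(x,m,\delta))\cdot g_l(\hbar\Phi(x+Ek,m+Rk,\delta))$, with final argument $\xi(x+Ek+El, m+Rk+Rl)$. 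Matching the two sides then reduces to three identities: the $\xi$-arguments agree trivially; the function factors agree because $\Phi(x+Ek,m+Rk,\delta) = \Phi(x,m,\delta) + (\Lambda_0 E + \Lambda_1 R)k = \Phi(x,m,\delta) + k$ by the hypothesis $\Lambda_0 E+\Lambda_1 R=\mid$, which turns $g_l(\hbar\Phi+\hbar k)$ into $g_l(\hbar\Phi(x+Ek,m+Rk,\delta))$ and likewise aligns the $\Sh^k$ shift with the reparametrization; and the phases agree by the same substitution together with bilinearity of $N$, since $N(\Phi(x,m,\delta),k+l) + N(k,l) = N(\Phi,k) + N(\Phi,l) + N(k,l) = N(\Phi,k) + N(\Phi+k,l)$.

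\textbf{Main obstacle.} No single step is deep; the one place to be careful is the phase bookkeeping in (ii) — making sure that the cocycle term $q^{N(k,l)}$ from the algebra product is exactly reconstituted by the mismatch between $N(\Phi(x,m,\delta),l)$ in the inner module map and $N(\Phi(x+Ek,m+Rk,\delta),l) = N(\Phi(x,m,\delta)+k,l)$ in the re-evaluated outer map. This is precisely where the identity $\Lambda_0 E + \Lambda_1 R = \mid$ is used, and it is the only nontrivial input; verifying it amounts to the linear-algebra identity $N(\Phi + (\Lambda_0 E+\Lambda_1 R)k, l) - N(\Phi,l) = N(k,l)$, which holds iff $\Lambda_0 E+\Lambda_1 R$ acts as the identity on the relevant index. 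I would present this as a short explicit calculation on monomials, then remark that bilinearity and linearity extend it to all of $\AhqDF$ and $\S$.
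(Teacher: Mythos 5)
Your verification of the module axiom $(f\cdot g)\xi = f\,(g\,\xi)$ is exactly the paper's proof: both sides are unfolded on monomials, and matching the function and phase factors reduces to $\Phi(x+En,k+Rn,\delta) = \Phi(x,k,\delta)+n$ — the only place $\Lambda_0 E + \Lambda_1 R = \mid$ enters — together with bilinearity of $N$, so the substance is correct and identical in approach. Your part (i), closure of the action in $\S$, is not addressed in the paper's proof at all (the paper simply asserts that the action is ``clearly linear'' and proceeds to associativity), so it is a welcome addition; however, as sketched it has a gap you should flag rather than paper over: the multiplier $f_n\paraa{\Phi(x,k,\delta)\hbar} = f_n(\hbar\Lambda_0 x + \hbar\Lambda_1 k + \hbar\delta)$ is $f_n$ precomposed with an affine map of $x$, which need not lie in $\F$ (the definition of $\F$-invariance authorizes multiplication only by $f(x)$ with $f\in\F$, not by affine reparametrizations of such $f$), and moreover it depends on $k$, whereas all three defining conditions of $\F$-invariance involve $k$-independent multipliers; the $k$-dependent portion $q^{N(\Lambda_1 k+\delta,n)}$ of the phase has the same issue. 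Your ``for each fixed $k$'' remark does not close this, because $\S$ is a subspace of functions of the joint variable $(x,k)$, not a product over $k$. For the examples the paper actually contemplates ($\S$ all of $F(\reals^D\times\integers^D,\complex)$, or continuous functions) closure is immediate, which is presumably why the paper elides the point; it would be cleanest either to state such a hypothesis or to note that closure is being assumed.
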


\begin{proof}
  The action is clearly linear, and it remains to show that
  \begin{align*}
    \paraa{(f_nU^n\cdot g_mU^m)\xi}(x,k)=
    \paraa{f_nU^n(g_mU^m\xi)}(x,k). 
  \end{align*}
  One computes
  \begin{align*}
    \paraa{(f_nU^n&\cdot g_mU^m)\xi}(x,k)
    =\paraa{(q^{N(n,m)}f_n(\Sh^ng_m)U^{n+m})\xi}(x,k)\\
                &=q^{N(n,m)}q^{N(\Phi(x,k,\delta),n+m)}
                  f_n(\Phi(x,k,\delta)\hbar)(\Sh^ng_m)(\Phi(x,k,\delta)\hbar)\times\\
                &\qquad\qquad\times\xi\paraa{x+E(n+m),k+R(n+m)}\\
                &=q^{N(n,m)}q^{N(\Phi(x,k,\delta),n+m)}
                  f_n(\Phi(x,k,\delta)\hbar)
                  g_{m}\paraa{\Phi(x,k,\delta)\hbar+n\hbar}\times\\
                &\qquad\qquad\times\xi\paraa{x+E(n+m),k+R(n+m)}.
  \end{align*}
  Next, one computes
  \begin{align*}
    \paraa{f_nU^n
    &(g_mU^m\xi)}(x,k)=q^{N(\Phi(x,k,\delta),n)}
      f_n(\Phi(x,k,\delta)\hbar)
      \paraa{(g_mU^m)\xi}(x+En,k+Rn)\\
    &=q^{N(\Phi(x,k,\delta),n)}
      f_n(\Phi(x,k,\delta)\hbar)
      q^{N(\Phi(x+En,k+Rn,\delta),m)}\times\\    
      &\qquad\times g_m\paraa{\Phi(x+En,k+Rn,\delta)\hbar}
        \xi\paraa{x+E(n+m),k+R(n+m)}\\
    &=q^{N(n,m)}q^{N(\Phi(x,k,\delta),n)}q^{N(\Phi(x,k,\delta),m)}f_n(\Phi(x,k,\delta)\hbar)\times\\
    &\qquad\times g_m\paraa{\Phi(x,k,\delta)\hbar+n\hbar}\xi\paraa{x+E(n+m),k+R(n+m)}\\
    &=\paraa{(f_nU^n\cdot g_mU^m)\xi}(x,k)
  \end{align*}
  by using that $\Phi(x+En,k+Rn,\delta)=\Phi(x,k,\delta)+n$, since
  $\Lambda_0E+\Lambda_1R=\mid$.
\end{proof}

\noindent
A left $\AhqDF$-module defined as in
Proposition~\ref{prop:left.module} will be denoted by
$\S_{\hbar}^\delta(\Lambda_0,E,\Lambda_1,R)$, or simply by $\S_\hbar^\delta$
(tacitly assuming a choice of $\Lambda_0,E,\Lambda_1,R)$.
Note that for $\Lambda_1=R=0$ and
$\Lambda_0E=\mid$, one effectively obtains a representation
acting on (a subalgebra of) $\Fall$ as
\begin{align}\label{eq:onlyx.rep}
  \paraa{f\xi}(x) = \sum_{n\in\integers^D}q^{N(\Lambda_0x+\delta,n)}
  f_n\paraa{(\Lambda_0x+\delta)\hbar}\xi(x+En).
\end{align}
Consequently, if $\F$ has the property that
$e^{i\lambda\cdot u}f\in\F$ for all $f\in\F$ and $\lambda\in\reals^D$,
then one may choose (with a slight abuse of notation) $\S=\F$ to
obtain a representation of $\AhqDF$. Note that, in this
case, Proposition~\ref{prop:q.isomorphism} implies that
$\AhqDF\simeq\A_{\hbar,1}^D(\F)$.  Similarly, for
$\Lambda_0=E=0$ and $\Lambda_1R=\mid$, one obtains a representation on
$F(\integers^D,\complex)$ given by
\begin{align}\label{eq:onlyn.rep}
  \paraa{f\xi}(k) = \sum_{n\in\integers^D}q^{N(\Lambda_1k+\delta,n)}
  f_n\paraa{(\Lambda_1k+\delta)\hbar}\xi(k+Rn).
\end{align}
In particular, for $\Lambda_1=R=\mid$ one obtains
\begin{align}\label{eq:onlyn.rep.R1}
  \paraa{f\xi}(k) = \sum_{n\in\integers^D}q^{N(k+\delta,n)}f_n\paraa{(k+\delta)\hbar}
  \xi(k+n).
\end{align}
A $\AhqDF$-module of the form
$\Shd(\Lambda_0,E,\Lambda_1,R)$ is defined by matrices
$\Lambda_0,\Lambda_1,E\in\Mat_D(\reals)$ and $R\in\Mat_D(\integers)$
together with $\delta\in\reals^D$. In the following, we try to
understand how these parameters affect the isomorphism class of the
module. The first result in this direction provides an isomorphism
between modules of the type $\Shd(\Lambda_0,E,\Lambda_1,R)$ for
different $\delta\in\reals^D$. Note that, in the following, when
comparing modules of the form $\Shd(\Lambda_0,E,\Lambda_1,R)$ for
different choices of parameters, we will (unless otherwise stated)
assume that the underlying $\F$-invariant subspace $\S$ is the same.

\begin{proposition}\label{prop:delta.isomorphism}
  If $\delta,\delta'\in\reals^D$ and $R\in\Mat_D(\integers)$, such
  that $R(\delta-\delta')\in\integers^D$, then
  \begin{align*}
   \S_\hbar^\delta(\Lambda_0,E,\Lambda_1,R)\simeq
    \S_\hbar^{\delta'}(\Lambda_0,E,\Lambda_1,R).
  \end{align*}
\end{proposition}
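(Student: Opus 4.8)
The plan is to write down an explicit intertwiner between the two modules and check it is well-defined and $\AhqDF$-linear. Looking at the module action in Proposition~\ref{prop:left.module}, the parameter $\delta$ enters only through $\Phi(x,k,\delta)=\Lambda_0x+\Lambda_1k+\delta$, both in the cocycle exponent $q^{N(\Phi,n)}$ and in the argument $f_n(\Phi\hbar)$; it does not enter the shift $\xi(x+En,k+Rn)$. The obvious candidate map $\psi:\S_\hbar^\delta\to\S_\hbar^{\delta'}$ is therefore a relabelling of the $\integers^D$-variable. Since $\Phi(x,k,\delta)=\Phi(x,k,\delta')+(\delta-\delta')$, I would like to absorb the difference $\delta-\delta'$ into a translation of the argument of $\xi$, but $\delta-\delta'$ need not lie in $\integers^D$; however $R(\delta-\delta')\in\integers^D$ by hypothesis, which is exactly what is needed to keep the $\integers^D$-component integral. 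Concretely, I propose
\begin{align*}
  (\psi\xi)(x,k) = \xi\paraa{x,k - R(\delta-\delta')}
\end{align*}
(possibly with an additional $q$-phase factor; see below), which maps $\S$ to $\S$ by $\F$-invariance since $R(\delta-\delta')\in\integers^D$, and is visibly invertible with inverse given by translating $k$ by $-R(\delta-\delta')$.

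The key computation is then to verify $\psi(f\xi) = f(\psi\xi)$ for $f=f_nU^n\in\AhqDF$. Expanding both sides using the formula in Proposition~\ref{prop:left.module}, the left side evaluates the $\delta$-action at $(x,k)$ and then shifts $k$, while the right side first shifts $k$ and then applies the $\delta'$-action; the shift $\xi(x+En,\,\cdot\,+Rn)$ commutes with the relabelling of $k$, so the content reduces to matching the scalar prefactors. Using $\Phi(x,k-R(\delta-\delta'),\delta') = \Lambda_0 x + \Lambda_1(k-R(\delta-\delta')) + \delta' = \Phi(x,k,\delta) - \Lambda_1 R(\delta-\delta') - (\delta-\delta') + \text{(corrections)}$, one sees that $\Phi$ does not shift by exactly $\delta-\delta'$ under the $k$-relabelling unless $\Lambda_1 R = 0$; so a naive relabelling will not be linear on the nose, and a compensating $q$-phase depending on $(x,k)$ will be required. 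The cleanest fix is to take $\psi\xi(x,k)= q^{c(x,k)}\xi(x,k-R(\delta-\delta'))$ where $c(x,k)$ is a bilinear/affine function of $(x,k)$ chosen so that the exponents of $q$ and the shifted arguments of $f_n$ line up; by $\F$-invariance (which explicitly allows multiplication by $e^{i\lambda\cdot x}$) such a phase keeps us inside $\S$, provided $c$ is real-linear in $x$, and the $k$-dependence of the phase is automatically fine since it is just a scalar per lattice point.

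The main obstacle I anticipate is getting the phase function $c(x,k)$ exactly right: one must carefully track how $N(\Phi(x,k,\delta),n)$ versus $N(\Phi(x,k-R(\delta-\delta'),\delta'),n)$ differ, and how $f_n(\Phi\hbar)$ versus $f_n(\Phi'\hbar)$ differ, and find a single phase that reconciles both simultaneously for all $f_n$ and all $n$ — this is constrained because the discrepancy in the argument of $f_n$ is a fixed vector (independent of $n$) times $\hbar$, namely $\big(\Lambda_1 R - \mid\big)(\delta-\delta')\hbar = -\Lambda_0 E(\delta-\delta')\hbar$ using $\Lambda_0E+\Lambda_1R=\mid$, so one may alternatively want to shift the \emph{$x$-argument} rather than add a phase, i.e. try $(\psi\xi)(x,k)=\xi(x+v,\,k-R(\delta-\delta'))$ for a suitable constant $v\in\reals^D$ (allowed since $\S$ is closed under real translations). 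I would first experiment with the pure-translation ansatz $(\psi\xi)(x,k)=\xi\big(x+v,k+w\big)$ with $w=R(\delta'-\delta)\in\integers^D$ and $v\in\reals^D$ a constant to be determined, demanding $\Lambda_0 v + \Lambda_1 w = \delta'-\delta$ (so that $\Phi$ transforms correctly), then check whether the cocycle exponents also match; if a residual phase remains, I would then layer on a $q^{N(\text{const},\,k+Rn)}$-type factor. Once the prefactors are matched, $\ast$-compatibility follows by the same bookkeeping as in the proofs of Proposition~\ref{prop:Ahq.h.isomorphic} and Proposition~\ref{prop:q.isomorphism}, using \eqref{alg_involution}, so I would only sketch it. The upshot is a clean module isomorphism, with all the real work concentrated in choosing $(v,w)$ and the possible compensating phase.
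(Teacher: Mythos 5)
Your final pure-translation ansatz $(\psi\xi)(x,k)=\xi(x+v,k+w)$ with $w=R(\delta'-\delta)\in\integers^D$ and $v$ determined by $\Lambda_0 v+\Lambda_1 w=\delta'-\delta$ is exactly the paper's proof: the explicit solution is $v=E(\delta'-\delta)$, which satisfies the constraint automatically because $\Lambda_0E+\Lambda_1R=\mid$. No compensating $q$-phase is needed, since once $\Phi(x+v,k+w,\delta)=\Phi(x,k,\delta')$ both the cocycle exponent $q^{N(\Phi,n)}$ and the argument of $f_n$ match identically.
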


\begin{proof}
  Define
  $\phi:\S_{\hbar}^\delta(\Lambda_0,E,\Lambda_1,R)\to
  \S_{\hbar}^{\delta'}(\Lambda_0,E,\Lambda_1,R)$ as
  \begin{align*}
    \phi(\xi)(x,k)=\xi\paraa{x+E(\delta'-\delta),k+R(\delta'-\delta)}
  \end{align*}
  for $\xi\in\S$, which is well-defined since
  $R(\delta'-\delta)\in\integers^D$. Then $\phi$ is clearly an invertible
  linear map, and it remains to prove that it is a homomorphism. Thus,
  one computes 
  \begin{align*}
    \phi\paraa{(f_nU^n)\xi}(x,k)
    &=q^{N(\Phi(x+E(\delta'-\delta),k+R(\delta'-\delta),\delta),n)}
    f_n\paraa{\Phi(x+E(\delta'-\delta),k+R(\delta'-\delta),\delta)\hbar}\times\\
    &\qquad\times \xi\paraa{x+E(n+\delta'-\delta),k+R(n+\delta'-\delta)}\\
    &=q^{N(\Phi(x,k,\delta'),n)}f_n\paraa{\Phi(x,k,\delta')\hbar}\xi\paraa{x+E(n+\delta'-\delta),k+R(n+\delta'-\delta)}\\
    &=\paraa{(f_nU^n)\phi(\xi)}(x,k),
  \end{align*}
  by using that
  \begin{align*}
    \Phi(x+E(\delta'-\delta),k+R(\delta'-\delta),\delta)
    &=\Lambda_0x+\Lambda_1k+(\Lambda_0E+\Lambda_1R)(\delta'-\delta)+\delta\\
    &= \Lambda_0x+\Lambda_1k+\delta'-\delta+\delta=\Phi(x,k,\delta')
  \end{align*}
  since $\Lambda_0E+\Lambda_1R=\mid$.  We conclude that
  $\S_\hbar^\delta(\Lambda_0,E,\Lambda_1,R)\simeq\S_\hbar^{\delta'}(\Lambda_0,E,\Lambda_1,R)$.
\end{proof}

\noindent
In particular, if $\delta-\delta'\in\integers^D$ then
Proposition~\ref{prop:delta.isomorphism} implies that
\begin{align*}
  \Shd(\Lambda_0,E,\Lambda_1,R)\simeq
  \S_{\hbar}^{\delta'}(\Lambda_0,E,\Lambda_1,R) 
\end{align*}
for any choice of $\Lambda_0,E,\Lambda_1,R$ such that
$\Lambda_0E+\Lambda_1R=\mid$. Hence, up to module isomorphism, one
only needs to consider $\delta\in[0,1)^D$. Moreover,
Proposition~\ref{prop:delta.isomorphism} also implies that
\begin{align*}
  \Shd(\Lambda_0,E,\Lambda_1,0)\simeq
  \S_{\hbar}^{\delta'}(\Lambda_0,E,\Lambda_1,0)
\end{align*}
for arbitrary $\delta,\delta'\in\reals^D$. Having considered module
isomorphisms related to $\delta$, let us now focus on the matrices
defining $\Shd(\Lambda_0,E,\Lambda_1,R)$, and derive a sufficient
condition for isomorphic modules.

\begin{lemma}\label{lemma:isomorphism.conditions}
  Let $\S_{\hbar}^\delta(\Lambda_0,E,\Lambda_1,R)$ and
  $\S_{\hbar}^\delta(\Lt_0,\Et,\Lt_1,\Rt)$ be left
  $\A_{\hbar,q}^D(\F)$-modules. If there exist $A\in\GL_D(\reals)$
  and $B\in\GL_D(\integers)$ such that
  \begin{alignat}{2}
    &\Lambda_0A = \Lt_0 &\qquad  &A\Et = E\label{eq:iso.cond.LAE}\\
    &\Lambda_1B = \Lt_1 &\qquad  &B\Rt = R,\label{eq:iso.cond.LBR}
  \end{alignat}
  then $\Lambda_0E=\Lt_0\Et$, $\Lambda_1R=\Lt_1\Rt$ and 
  $\S_{\hbar}^\delta(\Lambda_0,E,\Lambda_1,R)\simeq\S_{\hbar}^\delta(\Lt_0,\Et,\Lt_1,\Rt)$.
\end{lemma}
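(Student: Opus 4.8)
The plan is to construct the isomorphism explicitly from the matrices $A$ and $B$. The two matrix identities come for free: from $E=A\Et$ and $\Lambda_0A=\Lt_0$ one gets $\Lambda_0E=(\Lambda_0A)\Et=\Lt_0\Et$, and from $R=B\Rt$ and $\Lambda_1B=\Lt_1$ one gets $\Lambda_1R=(\Lambda_1B)\Rt=\Lt_1\Rt$; in particular $\Lt_0\Et+\Lt_1\Rt=\Lambda_0E+\Lambda_1R=\mid$, which is consistent with $\S_{\hbar}^\delta(\Lt_0,\Et,\Lt_1,\Rt)$ being a module in the sense of Proposition~\ref{prop:left.module}.

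For the isomorphism I would define $\phi:\S_{\hbar}^\delta(\Lambda_0,E,\Lambda_1,R)\to\S_{\hbar}^\delta(\Lt_0,\Et,\Lt_1,\Rt)$ by $\phi(\xi)(x,k)=\xi(Ax,Bk)$. Since $B\in\GL_D(\integers)$ has $\det B=\pm 1$, so that $B^{-1}\in\GL_D(\integers)$ as well, the map $k\mapsto Bk$ is a bijection of $\integers^D$, hence $\phi(\xi)$ is again a function on $\reals^D\times\integers^D$; it is linear in $\xi$ and invertible, with inverse $\phi^{-1}(\eta)(x,k)=\eta(A^{-1}x,B^{-1}k)$.

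The heart of the matter is that $\phi$ intertwines the two module structures, and since the action formula is additive in the coefficients $f_n$ it suffices to check this for an element of the form $f=f_nU^n$. Writing $\Phi(x,k,\delta)=\Lambda_0x+\Lambda_1k+\delta$ and $\tilde{\Phi}(x,k,\delta)=\Lt_0x+\Lt_1k+\delta$, two observations do all the work. First, $\Phi(Ax,Bk,\delta)=(\Lambda_0A)x+(\Lambda_1B)k+\delta=\tilde{\Phi}(x,k,\delta)$ by \eqref{eq:iso.cond.LAE}--\eqref{eq:iso.cond.LBR}, so that the phase $q^{N(\,\cdot\,,n)}$ and the evaluation $f_n(\,\cdot\,\hbar)$ occurring in the two actions agree. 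Second, $A^{-1}E=\Et$ and $B^{-1}R=\Rt$ (again by \eqref{eq:iso.cond.LAE}--\eqref{eq:iso.cond.LBR}), so that $\xi(Ax+En,Bk+Rn)=\xi\paraa{A(x+\Et n),B(k+\Rt n)}=\phi(\xi)(x+\Et n,k+\Rt n)$. Substituting these into $\phi\paraa{(f_nU^n)\xi}(x,k)=\paraa{(f_nU^n)\xi}(Ax,Bk)$ turns the defining expression for the action on $\S_{\hbar}^\delta(\Lambda_0,E,\Lambda_1,R)$, term by term, into the defining expression for the action on $\S_{\hbar}^\delta(\Lt_0,\Et,\Lt_1,\Rt)$ evaluated on $\phi(\xi)$; hence $\phi\paraa{f\xi}=f\phi(\xi)$ for every $f\in\AhqDF$, and $\phi$ is the desired module isomorphism.

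I do not expect any real obstacle; the one point deserving care is the well-definedness of $\phi$, i.e. that $\phi(\xi)$ again lies in the common subspace $\S$. The $\F$-invariance hypothesis only gives stability of $\S$ under translations, under multiplication by elements of $\F$, and under multiplication by $e^{i\lambda\cdot x}$ --- not under the linear change of coordinates $(x,k)\mapsto(Ax,Bk)$ --- so this must either be verified for the $\S$ in question, or sidestepped by taking $\S=F(\reals^D\times\integers^D,\complex)$, for which it is automatic. Everything else is a routine substitution.
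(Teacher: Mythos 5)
Your proposal follows essentially the same route as the paper: the same map $\phi(\xi)(x,k)=\xi(Ax,Bk)$, the same key observation that $\Phi(Ax,Bk,\delta)=\tilde{\Phi}(x,k,\delta)$, and the same rewriting of the argument shift via $A^{-1}E=\Et$ and $B^{-1}R=\Rt$, so this is not a different argument but the one the paper gives. Your closing caveat about well-definedness of $\phi(\xi)$ as an element of $\S$ is a fair observation that the paper leaves implicit (it tacitly assumes $\S$ is stable under such linear changes of variable, e.g.\ $\S=F(\reals^D\times\integers^D,\complex)$).
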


\begin{proof}
  To distinguish the two different module structures on $\S$, let us
  denote the action of $f\in\AhqDF$ as $\rho(f)\xi$ and
  $\tilde{\rho}(f)\xi$ on $\Shd(\Lambda_0,E,\Lambda_1,R)$
  and $\Shd(\Lt_0,\Et,\Lt_1,\Rt)$, respectively.  Define a linear map
  $\phi:\S_{\hbar}^\delta(\Lambda_0,E,\Lambda_1,R)\to\S_{\hbar}^\delta(\Lt_0,\Et,\Lt_1,\Rt)$
  by
  \begin{align*}
    \phi(\xi)(x,k) = \xi(Ax,Bk)
  \end{align*}
  for $\xi\in\S$; since $A\in\GL_D(\reals)$ and
  $B\in\GL_D(\integers)$, $\phi$ is invertible. Let us show that
  \eqref{eq:iso.cond.LAE} and \eqref{eq:iso.cond.LBR} imply that
  $\phi$ is a left module homomorphism. First, writing
  \begin{align*}
    \Phi(x,k,\delta)=\Lambda_0x+\Lambda_1k+\delta\mathand
    \tilde{\Phi}(x,k,\delta)=\Lt_0x+\Lt_1k+\delta
  \end{align*}
  for $f=f_nU^n\in\A_{\hbar,q}^D(\F)$, one notes that
  \begin{align*}
    \Phi(Ax,Bk,\delta)=\Lambda_0Ax+\Lambda_1Bk+\delta
    &= \Lt_0x+\Lt_1k+\delta=\tilde{\Phi}(x,k,\delta)
  \end{align*}
  by using \eqref{eq:iso.cond.LAE} and \eqref{eq:iso.cond.LBR}. To
  show that $\phi$ is a homomorphism, one computes
  \begin{align*}
    \phi(\rho(f)\xi)(x,k)
    &= q^{N(\Phi(Ax,Bk,\delta),n)}
    f_n\paraa{\Phi(Ax,Bk,\delta)\hbar}\xi(Ax+En,Bk+Rn)\\
    &=q^{N(\tilde{\Phi}(x,k,\delta),n)}
    f_n(\tilde{\Phi}(x,k,\delta)\hbar)\xi(Ax+A\Et n,Bk+B\Rt n)
  \end{align*}
  and
  \begin{align*}
    \paraa{\tilde{\rho}(f)\phi(\xi)}(x,k)
    &=q^{N(\tilde{\Phi}(x,k,\delta),n)}
      f_n(\tilde{\Phi}(x,k,\delta)\hbar)\xi(Ax-A\Et n,Bk-B\Rt n)\\
    &=\phi(\rho(f)\xi)(x,k),
  \end{align*}
  by again using \eqref{eq:iso.cond.LAE} and \eqref{eq:iso.cond.LBR}. Hence,
  $\phi$ is a module isomorphism. Moreover, multiplying $A\Et=E$ from
  the left by $\Lambda_0$ and using that $\Lambda_0A=\Lt_0$ gives
  $\Lt_0\Et=\Lambda_0E$. Similarly, \eqref{eq:iso.cond.LBR} implies
  that $\Lambda_1R=\Lt_1\Rt$.
\end{proof}

\noindent
Let us now use the above result to show that under certain regularity
conditions of the parameters defining the module, a sufficient
condition for $\Shd(\Lambda_0,E,\Lambda_1,R)$ and
$\Shd(\Lt_0,\Et,\Lt_1,\Rt)$ to be isomorphic, is that
$\Lambda_0E=\Lt_0\Et$ (or, equivalently, $\Lambda_1R=\Lt_1\Rt$). More
precisely, we start by introducing the following compatibility
conditions.

\begin{definition}
  Let $\Lambda_0,\Lambda_1,E\in\Mat_{D}(\reals)$ and
  $R\in\Mat_{D}(\integers)$. The tuple $(\Lambda_0,E,\Lambda_1,R)$ is
  called \emph{regular} if $\Lambda_0$ or $E$ is invertible and
  $R\in\GL_D(\integers)$. Moreover, two tuples $(M_1,M_2,M_3,M_4)$ and
  $(\Mt_1,\Mt_2,\Mt_3,\Mt_4)$ are called \emph{compatible} if
  \begin{align*}
    M_i\text{ invertible }\equivalent \Mt_i\text{ invertible}
  \end{align*}
  for $i=1,2,3,4$.
\end{definition}

\begin{proposition}
  Let $\Shd(\Lambda_0,E,\Lambda_1,R)$ and $\Shd(\Lt_0,\Et,\Lt_1,\Rt)$
  be left $\A_{\hbar,q}^D(\F)$-modules such that
  $(\Lambda_0,E,\Lambda_1,R)$ and $(\Lt_0,\Et,\Lt_1,\Rt)$ are regular
  and compatible. If $\Lambda_0E=\Lt_0\Et$ then
  \begin{align*}
    \Shd(\Lambda_0,E,\Lambda_1,R)\simeq\Shd(\Lt_0,\Et,\Lt_1,\Rt). 
  \end{align*}
\end{proposition}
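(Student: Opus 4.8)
The plan is to reduce to Lemma~\ref{lemma:isomorphism.conditions} by producing matrices $A\in\GL_D(\reals)$ and $B\in\GL_D(\integers)$ satisfying \eqref{eq:iso.cond.LAE} and \eqref{eq:iso.cond.LBR}. The choice of $B$ is forced and easy: since $R,\Rt\in\GL_D(\integers)$ (both tuples are regular), set $B=R\Rt^{-1}$, which lies in $\GL_D(\integers)$; then $B\Rt=R$ automatically, and from $\Lambda_0E+\Lambda_1R=\Lt_0\Et+\Lt_1\Rt=\mid$ together with the hypothesis $\Lambda_0E=\Lt_0\Et$ we get $\Lambda_1R=\Lt_1\Rt$, hence $\Lambda_1B=\Lambda_1R\Rt^{-1}=\Lt_1\Rt\Rt^{-1}=\Lt_1$, so \eqref{eq:iso.cond.LBR} holds. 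So the entire problem is to construct $A$.

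For $A$ I would split into the two cases permitted by regularity. \textbf{Case 1: $\Lambda_0$ invertible.} By compatibility $\Lt_0$ is invertible too; set $A=\Lambda_0^{-1}\Lt_0\in\GL_D(\reals)$. Then $\Lambda_0A=\Lt_0$ immediately. For the second condition $A\Et=E$: multiply $\Lambda_0E=\Lt_0\Et$ on the left by $\Lambda_0^{-1}$ to get $E=\Lambda_0^{-1}\Lt_0\Et=A\Et$, as required. \textbf{Case 2: $E$ invertible.} By compatibility $\Et$ is invertible; set $A=E\Et^{-1}\in\GL_D(\reals)$. Then $A\Et=E$ immediately, and $\Lambda_0A=\Lambda_0E\Et^{-1}=\Lt_0\Et\Et^{-1}=\Lt_0$ using the hypothesis. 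In either case both relations in \eqref{eq:iso.cond.LAE} hold with an invertible real $A$.

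With $A$ and $B$ in hand, Lemma~\ref{lemma:isomorphism.conditions} applies directly and yields $\Shd(\Lambda_0,E,\Lambda_1,R)\simeq\Shd(\Lt_0,\Et,\Lt_1,\Rt)$, completing the proof. One small bookkeeping point worth spelling out: in Case 2 I should double-check that $\Lambda_1 B=\Lt_1$ still works — it does, because the derivation $\Lambda_1R=\Lt_1\Rt$ above used only $\Lambda_0E=\Lt_0\Et$ and the two identities $\Lambda_0E+\Lambda_1R=\Lt_0\Et+\Lt_1\Rt=\mid$, none of which depend on which of $\Lambda_0,E$ is the invertible one. I do not anticipate a genuine obstacle here; the only mild subtlety is making sure the compatibility hypothesis is invoked in exactly the right place (to transfer invertibility of $\Lambda_0$ or $E$ to $\Lt_0$ or $\Et$), so that the chosen inverse actually exists. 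Everything else is a short algebraic manipulation feeding into the already-established lemma.
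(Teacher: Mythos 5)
Your proof is correct and follows essentially the same route as the paper: choose $B=R\Rt^{-1}$ (using $\Lambda_1R=\Lt_1\Rt$, which follows from $\Lambda_0E=\Lt_0\Et$ and the two normalization identities) and $A=\Lambda_0^{-1}\Lt_0$ or $A=E\Et^{-1}$ depending on which factor regularity makes invertible, then invoke Lemma~\ref{lemma:isomorphism.conditions}. You spell out the verifications of \eqref{eq:iso.cond.LAE} and \eqref{eq:iso.cond.LBR} in somewhat more detail than the paper does, but the argument is the same.
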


\begin{proof}
  Assume that $(\Lambda_0,E,\Lambda_1,R)$ and $(\Lt_0,\Et,\Lt_1,\Rt)$
  are regular and compatible. If $\Lambda_0,\Lt_0$ are invertible then
  one sets $A=\Lambda_0^{-1}\Lt_0$, and if $E,\Et$ are compatible then
  one sets $A=E\Et^{-1}$. Since $\Lambda_0E=\Lt_0\Et$ these choices of
  $A$ satisfy \eqref{eq:iso.cond.LAE} in
  Lemma~\ref{lemma:isomorphism.conditions}.  Note that, since
  $\Lambda_0E+\Lambda_1R=\Lt_0\Et+\Lt_1\Rt=\mid$ and
  $\Lambda_0E=\Lt_0\Et$, it follows that $\Lambda_1R=\Lt_1R$.  Since
  $R,\Rt$ are invertible, one can set $B=R\Rt^{-1}$, which is seen to
  satisfy \eqref{eq:iso.cond.LBR} by using that $\Lambda_1R=\Lt_1\Rt$.
  Lemma~\ref{lemma:isomorphism.conditions} then implies that
  $\Shd(\Lambda_0,E,\Lambda_1,R)\simeq\Shd(\Lt_0,\Et,\Lt_1,\Rt)$.
\end{proof}

\noindent
Recall that in the cases where $\Lambda_0=E=0$ or $\Lambda_1=R=0$ one
effectively obtains representations on functions on $\reals^D$ and
$\integers^D$, respectively (cf. equations \eqref{eq:onlyx.rep} and
\eqref{eq:onlyn.rep}). Let us now examine these two cases in
detail. The next results implies that for $\Lambda_1=R=0$, there is
only one equivalence class of isomorphic modules.

\begin{proposition}
  Let $\Shd(\Lambda_0,E,0,0)$ and
  $\S_{\hbar}^{\delta'}(\Lt_0,\Et,0,0)$ be left
  $\A_{\hbar,q}^D(\F)$-modules defined as in
  Proposition~\ref{prop:left.module}. Then
  $\Shd(\Lambda_0,E,0,0)\simeq\S_{\hbar}^{\delta'}(\Lt_0,\Et,0,0)$.
\end{proposition}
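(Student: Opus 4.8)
The plan is to reduce the statement to the two isomorphism results already established, namely Proposition~\ref{prop:delta.isomorphism} and Lemma~\ref{lemma:isomorphism.conditions}. Note first that, since $\Lambda_1=R=0$, the defining relation $\Lambda_0E+\Lambda_1R=\mid$ collapses to $\Lambda_0E=\mid$; as $\Lambda_0,E\in\Mat_D(\reals)$ are square, this forces both to be invertible with $E=\Lambda_0^{-1}$, and likewise $\Et=\Lt_0^{-1}$. In particular the preceding proposition on regular, compatible tuples does \emph{not} apply here, since regularity would require $R\in\GL_D(\integers)$, whereas $R=0$; instead one chains together a shift in $\delta$ with a linear change of variables.

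First I would invoke Proposition~\ref{prop:delta.isomorphism}: with $R=0$ the hypothesis $R(\delta-\delta')\in\integers^D$ is automatically satisfied for every pair $\delta,\delta'\in\reals^D$, so
\[
  \Shd(\Lambda_0,E,0,0)\simeq\S_{\hbar}^{\delta'}(\Lambda_0,E,0,0).
\]
This disposes of the discrepancy between $\delta$ and $\delta'$, and it remains to compare $\S_{\hbar}^{\delta'}(\Lambda_0,E,0,0)$ with $\S_{\hbar}^{\delta'}(\Lt_0,\Et,0,0)$ for the \emph{same} shift vector $\delta'$.

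For this I would apply Lemma~\ref{lemma:isomorphism.conditions} with the choices $A=\Lambda_0^{-1}\Lt_0\in\GL_D(\reals)$ and $B=\mid\in\GL_D(\integers)$, both well-defined and invertible by the remarks above. One then checks the four conditions \eqref{eq:iso.cond.LAE}--\eqref{eq:iso.cond.LBR}: $\Lambda_0A=\Lt_0$ holds by the definition of $A$; $A\Et=\Lambda_0^{-1}\Lt_0\Et=\Lambda_0^{-1}=E$ using $\Lt_0\Et=\mid$ and $E=\Lambda_0^{-1}$; and $\Lambda_1B=0=\Lt_1$ together with $B\Rt=0=R$ hold trivially, since all four of these matrices vanish. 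Hence Lemma~\ref{lemma:isomorphism.conditions} yields $\S_{\hbar}^{\delta'}(\Lambda_0,E,0,0)\simeq\S_{\hbar}^{\delta'}(\Lt_0,\Et,0,0)$, and composing with the isomorphism from Proposition~\ref{prop:delta.isomorphism} gives $\Shd(\Lambda_0,E,0,0)\simeq\S_{\hbar}^{\delta'}(\Lt_0,\Et,0,0)$.

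There is no serious obstacle here: the entire content lies in observing that the vanishing of $\Lambda_1,R,\Lt_1,\Rt$ makes the $B$-sector of Lemma~\ref{lemma:isomorphism.conditions} trivially solvable, which simultaneously frees one to absorb the difference of $\delta$'s via Proposition~\ref{prop:delta.isomorphism}. The only point worth recording explicitly is that $\Lambda_0E=\mid$ and $\Lt_0\Et=\mid$ force $\Lambda_0$ and $\Lt_0$ to be invertible, which is precisely what makes $A=\Lambda_0^{-1}\Lt_0$ meaningful.
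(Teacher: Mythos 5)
Your argument is correct and follows essentially the same route as the paper's proof: first remove the discrepancy in $\delta$ via Proposition~\ref{prop:delta.isomorphism} (trivial because $R=0$), then apply Lemma~\ref{lemma:isomorphism.conditions} with $A=\Lambda_0^{-1}\Lt_0$ and $B=\mid$, using the fact that $\Lambda_0E=\mid$ and $\Lt_0\Et=\mid$ force invertibility. The only cosmetic difference is that the paper applies the $\delta$-shift to the $(\Lt_0,\Et)$ side while you apply it to the $(\Lambda_0,E)$ side; you also spell out the verification of \eqref{eq:iso.cond.LAE}--\eqref{eq:iso.cond.LBR} a bit more explicitly, but the content is identical.
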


\begin{proof}
  It follows directly from Proposition~\ref{prop:delta.isomorphism}
  that
  $\S_{\hbar}^{\delta'}(\Lt_0,\Et,0,0)\simeq \Shd(\Lt_0,\Et,0,0)$,
  (since $\Rt=0$).  Next, let us show that
  $\Shd(\Lambda_0,E,0,0)\simeq\Shd(\Lt_0,\Et,0,0)$. Since
  $\Lambda_0E+\Lambda_1R=\Lambda_0E=\mid$ and
  $\Lt_0\Et+\Lt_1\Rt=\Lt_0\Et=\mid$, it follows that $\Lambda_0,\Lt_0$
  are invertible. Setting $A=\Lambda_0^{-1}\Lt_0$ and $B=\mid$ one can
  use Lemma~\ref{lemma:isomorphism.conditions}, to conclude that
  $\Shd(\Lambda_0,E,0,0)$ and $\Shd(\Lt_0,\Et,0,0)$ are
  isomorphic which, together with the previous argument, implies that
  $\Shd(\Lambda_0,E,0,0)\simeq\S_{\hbar}^{\delta'}(\Lt_0,\Et,0,0)$.
\end{proof}

\noindent
Thus, for these modules one may always choose $\Lambda_0=E=\mid$ and
$\delta=0$, giving
\begin{align}
  \paraa{f\xi}(x) = \sum_{n\in\integers^D}q^{N(x,n)}
  f_n(x\hbar)\xi(x+n\hbar).
\end{align}

\noindent
Let us now consider modules of the form $\Shd(0,0,\Lambda_1,R)$. We
note that specifying such a module amounts to choosing
$\delta\in\reals^D$ and a matrix $R\in\Mat_D(\integers)$ with
$\det R\neq 0$, giving the module $\Shd(0,0,R^{-1},R)$ (clearly
satisfying $\Lambda_1R=R^{-1}R=\mid$). Moreover, as previously discussed, in
the case when $\Lambda_0=E=0$ one may consider $\S$ to be a subset of
complex valued functions on $\integers^D$, and in the following, we
will consider the case when $\S$ consists of all \emph{compactly
  supported} such functions. To emphasize this particular setup, we
introduce the notation
\begin{align*}
  \Ssh(R,\delta) = \Shd(0,0,R^{-1},R).
\end{align*}
Furthermore, we let $\Matbar_D(\integers)\subseteq\Mat_D(\integers)$
denote the set of integer $(D\times D)$-matrices with nonzero
determinant. Note that a basis for (compactly supported) complex
valued functions on $\integers^D$ is given by
$\{\ket{k}\}_{k\in\integers^D}$ defined as
\begin{align*}
  \ket{k}(n) =
  \begin{cases}
    1\text{ if }k=n\\
    0\text{ if }k\neq n
  \end{cases}
\end{align*}
giving for $f=f_nU^n$
\begin{align}
  f\ket{k} = \sum_{n\in\integers^D}q^{N(R^{-1}k-n+\delta,n)}
  f_{n}\paraa{(R^{-1}k-n+\delta)\hbar}\ket{k-Rn}.
\end{align}
Let us begin by deriving sufficient conditions for
isomorphisms of the modules $\Ssh(R,\delta)$.

\begin{proposition}\label{prop:discrete.module.iso.cond}
  Let $R,\Rt\in\Matbar_D(\integers)$ and
  $\delta,\deltat\in\reals^D$. If
  \begin{align*}
    R\Rt^{-1}\in\GL_D(\integers)\qand
    R(\delta-\deltat)\in\integers^D 
  \end{align*}
  then $\Ssh(R,\delta)\simeq\Ssh(\Rt,\deltat)$.
\end{proposition}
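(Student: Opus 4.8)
The plan is to combine the two previously established isomorphism results. Recall that $\Ssh(R,\delta) = \Shd(0,0,R^{-1},R)$ acts on compactly supported functions on $\integers^D$, so I am in the situation $\Lambda_0 = E = 0$, $\Lambda_1 = R^{-1}$. First I would use Proposition~\ref{prop:delta.isomorphism} to eliminate the difference between $\delta$ and $\deltat$. Indeed, since $R(\delta - \deltat)\in\integers^D$ by hypothesis, that proposition (applied with the tuple $(0,0,R^{-1},R)$, which satisfies $\Lambda_0 E + \Lambda_1 R = R^{-1}R = \mid$) gives
\[
  \Ssh(R,\delta) = \Shd(0,0,R^{-1},R) \simeq \S_\hbar^{\deltat}(0,0,R^{-1},R) = \Ssh(R,\deltat).
\]
So it suffices to prove $\Ssh(R,\deltat)\simeq\Ssh(\Rt,\deltat)$, i.e. to handle the change of matrix $R\mapsto\Rt$ at fixed $\delta$.

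For the second step I would invoke Lemma~\ref{lemma:isomorphism.conditions} with the two tuples $(\Lambda_0,E,\Lambda_1,R) = (0,0,R^{-1},R)$ and $(\Lt_0,\Et,\Lt_1,\Rt) = (0,0,\Rt^{-1},\Rt)$. The conditions \eqref{eq:iso.cond.LAE} involving $\Lambda_0$ and $E$ are vacuous here (both sides are $0$, so any $A$ works), so the only real constraint is \eqref{eq:iso.cond.LBR}: I need $B\in\GL_D(\integers)$ with $\Lambda_1 B = \Lt_1$ and $B\Rt = R$, i.e. $R^{-1}B = \Rt^{-1}$ and $B\Rt = R$. Setting $B = R\Rt^{-1}$ makes both equations hold identically, and $B\in\GL_D(\integers)$ is exactly the standing hypothesis $R\Rt^{-1}\in\GL_D(\integers)$. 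For $A$ I can take $A = \mid$ (the conditions on $A$ are trivially satisfied since $\Lambda_0 A = 0 = \Lt_0$ and $A\Et = 0 = E$). Lemma~\ref{lemma:isomorphism.conditions} then yields $\Shd(0,0,R^{-1},R)\simeq\Shd(0,0,\Rt^{-1},\Rt)$, that is, $\Ssh(R,\deltat)\simeq\Ssh(\Rt,\deltat)$, and chaining with the first step completes the proof.

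The one point that deserves a moment's care, rather than being a genuine obstacle, is well-definedness of the intertwiner at the level of the function space $\S$: in Lemma~\ref{lemma:isomorphism.conditions} the map is $\phi(\xi)(x,k) = \xi(Ax,Bk)$, but here $\S$ consists of functions on $\integers^D$ alone (the $x$-variable is absent since $\Lambda_0 = E = 0$), so the relevant map is $\phi(\xi)(k) = \xi(Bk)$ with $B = R\Rt^{-1}\in\GL_D(\integers)$. Since $B$ is an invertible integer matrix it maps $\integers^D$ bijectively onto itself, so $\phi$ preserves compact support and is invertible with inverse $\xi\mapsto\xi(B^{-1}k)$; the homomorphism computation is then identical to the one in Lemma~\ref{lemma:isomorphism.conditions} restricted to the $k$-variable. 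Similarly the map from Proposition~\ref{prop:delta.isomorphism}, $\phi(\xi)(k) = \xi(k + R(\deltat-\delta))$, is a well-defined shift on $\integers^D$ precisely because $R(\delta-\deltat)\in\integers^D$. Thus there is really nothing hard here — the proposition is a direct corollary of the two earlier results, once one checks that the standing hypotheses are exactly the two conditions needed to apply them.
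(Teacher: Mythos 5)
Your proposal is correct and takes essentially the same route as the paper: both apply Lemma~\ref{lemma:isomorphism.conditions} with $A=\mid$, $B=R\Rt^{-1}$ and Proposition~\ref{prop:delta.isomorphism} using $R(\delta-\deltat)\in\integers^D$, differing only in the order of the two steps. The extra paragraph you add on well-definedness of the intertwiner on $\integers^D$ is a reasonable sanity check but not strictly needed, as the cited results already guarantee it.
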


\begin{proof}
  Under the assumption that $R\Rt^{-1}\in\GL_D(\integers)$, it follows
  directly from Lemma~\ref{lemma:isomorphism.conditions}, with
  $A=\mid$ and $B=R\Rt^{-1}$, that
  $\Ssh(R,\deltat)\simeq\Ssh(\Rt,\deltat)$. Furthermore, since
  $R(\delta-\deltat)\in\integers^D$,
  Proposition~\ref{prop:delta.isomorphism} implies that
  $\Ssh(R,\deltat)\simeq\Ssh(R,\delta)$.
\end{proof}

\noindent
Next, let us show that the conditions in
Proposition~\ref{prop:discrete.module.iso.cond} are also necessary if
the function algebra separates points. Recall that a subalgebra
$\F\subseteq\Fall$ \emph{separates points} if for any two distinct
points $x,y\in\reals^D$, there exists $f\in\F$ such that
$f(x)\neq f(y)$.

\begin{proposition}\label{prop:discrete.module.iso}
  Let $\AhqDF$ be a shift algebra such that $\F$ separates
  points. Then $\Ssh(R,\delta)\simeq\Ssh(\Rt,\delta)$ if and only if
  $R\Rt^{-1}\in\GL_D(\integers)$ and
  $R(\delta-\delta')\in\integers^D$.
\end{proposition}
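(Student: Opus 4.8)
The plan is to establish the nontrivial (``only if'') direction; the converse is exactly Proposition~\ref{prop:discrete.module.iso.cond} (applied with $\deltat=\delta'$, so that the condition $R(\delta-\delta')\in\integers^D$ is the one that appears). The starting observation is that the restriction of the module structure to the function subalgebra $\F\subseteq\AhqDF$ (elements $gU^0$ with $g\in\F$) is diagonal in the canonical basis: since $N$ is linear in its second slot, the explicit formula for $f\ket{k}$ gives $g\ket{k}=g\big((R^{-1}k+\delta)\hbar\big)\ket{k}$, so $\ket{k}$ spans a common $\F$-eigenline with ``character'' $\chi_k\colon\F\to\complex$, $\chi_k(g)=g\big((R^{-1}k+\delta)\hbar\big)$. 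Because $R\in\Matbar_D(\integers)$ is invertible over $\rationals$, the points $(R^{-1}k+\delta)\hbar$ are pairwise distinct as $k$ ranges over $\integers^D$, and since $\F$ separates points the functionals $\{\chi_k\}_{k\in\integers^D}$ are pairwise distinct. The same statements hold for $\Ssh(\Rt,\delta')$ with $\chi'_k(g)=g\big((\Rt^{-1}k+\delta')\hbar\big)$.

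The first lemma to prove is structural: in $\Ssh(\Rt,\delta')$, every nonzero common $\F$-eigenvector is a scalar multiple of one basis vector $\ket{k}$. Indeed, if $v=\sum_k c_k\ket{k}$ (finite support) satisfies $gv=\lambda(g)v$ for all $g\in\F$, then $c_k\big(\chi'_k(g)-\lambda(g)\big)=0$ for every $k$ and $g$, so all $k$ with $c_k\neq0$ share the same character $\chi'_k=\lambda$; distinctness of the $\chi'_k$ then forces the support of $v$ to be a single point.

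Now let $\phi\colon\Ssh(R,\delta)\to\Ssh(\Rt,\delta')$ be a module isomorphism. Intertwining the $\F$-action shows that $\phi(\ket{j})$ is a nonzero common $\F$-eigenvector of $\Ssh(\Rt,\delta')$ with eigencharacter $\chi_j$; by the lemma, $\phi(\ket{j})=c_j\ket{k(j)}$ with $c_j\neq0$ and $\chi'_{k(j)}=\chi_j$. Separation of points upgrades the equality of characters to $(\Rt^{-1}k(j)+\delta')\hbar=(R^{-1}j+\delta)\hbar$, i.e. $k(j)=\Rt\big(R^{-1}j+\delta-\delta'\big)$, which must therefore lie in $\integers^D$ for \emph{every} $j\in\integers^D$. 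Taking $j=0$ gives $\Rt(\delta-\delta')\in\integers^D$; taking $j=e_i$ (the $i$-th standard basis vector) and subtracting the $j=0$ relation gives $\Rt R^{-1}e_i\in\integers^D$, so $\Rt R^{-1}\in\Mat_D(\integers)$. Running the identical argument with the isomorphism $\phi^{-1}$ yields $R\Rt^{-1}\in\Mat_D(\integers)$ (and also $R(\delta-\delta')\in\integers^D$ directly); since $\Rt R^{-1}$ and its inverse $R\Rt^{-1}$ are both integer matrices, $\det(\Rt R^{-1})=\pm1$, hence $R\Rt^{-1}\in\GL_D(\integers)$. Combined with Proposition~\ref{prop:discrete.module.iso.cond}, this finishes the proof.

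I expect the structural lemma of the second paragraph to be the main obstacle: it is precisely where the hypothesis that $\F$ separates points (together with the rational invertibility of $R$ and $\Rt$) is used, and it is what prevents $\phi$ from mixing basis vectors — without it one can only extract weaker information about $\phi$. Everything after it is elementary bookkeeping in the integer lattice.
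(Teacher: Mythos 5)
Your proof is correct and takes essentially the same approach as the paper: both arguments exploit that the $\F$-action on $\Ssh(R,\delta)$ is diagonal with pairwise-distinct characters $\chi_k$ (by rational invertibility of $R$ and separation of points), so intertwining forces equality of evaluation points and hence the integer lattice conditions. Your explicit structural lemma (that $\phi$ permutes eigenlines) is a slightly cleaner packaging of what the paper does directly via the nonvanishing matrix coefficients $\phi(k_1,k)$, $\phi(k,k_2)$, and your substitution of $j=0$ and $j=e_i$ replaces the paper's short contradiction argument with $R\Rt^{-1}\in\Mat_D(\rationals)$, but the mathematical substance is identical.
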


\begin{proof}
  The sufficiency of the conditions given follows immediately from
  Proposition~\ref{prop:discrete.module.iso.cond}. Now, to show the
  necessity of the conditions, assume that
  \begin{align*}
    \phi:\Ssh(R,\delta)\to\Ssh(\Rt,\deltat)
  \end{align*}
  is a module isomorphism. With respect to the basis
  $\{\ket{k}\}_{k\in\integers^D}$, let us write
  \begin{align*}
    \phi(\ket{k})=\sum_{l\in\integers^D}\phi(k,l)\ket{l}
  \end{align*}
  with $\phi(k,l)\in\complex$. Note that, since $\phi$ is a vector
  space isomorphism, for each $k\in\integers^D$ there exist
  $k_1,k_2\in\integers^D$ such that $\phi(k_1,k)\neq 0$ and
  $\phi(k,k_2)\neq 0$. Furthermore, since $\phi$ is a module
  homomorphism one has $\phi(f\ket{k}) = f\phi(\ket{k})$ for all
  $f\in\F$, giving
  \begin{align*}
    &\sum_{l\in\integers^D}f\paraa{(R^{-1}k+\delta)\hbar}\phi(k,l)\ket{l}
    =\sum_{l\in\integers^D}f\paraa{(\Rt^{-1}l+\deltat)\hbar}\phi(k,l)\ket{l}\equivalent\\
    &\parab{f\paraa{(R^{-1}k+\delta)\hbar}
    -f\paraa{(\Rt^{-1}l+\deltat)\hbar}}\phi(k,l)=0
  \end{align*}
  for all $k,l\in\integers^D$ and $f\in\F$. Thus, for each
  $k\in\integers^D$ it follows that there exist $k_1,k_2$ (as introduced above) such that
  \begin{align*}
    &f\paraa{(R^{-1}k+\delta)\hbar}=f\paraa{(\Rt^{-1}k_2+\deltat)\hbar}\\
    &f\paraa{(R^{-1}k_1+\delta)\hbar}=f\paraa{(\Rt^{-1}k+\deltat)\hbar}
  \end{align*}
  for all $f\in\F$. Since the algebra $\F$ separates points, it
  follows that
  \begin{align*}
    &R^{-1}k+\delta = \Rt^{-1}k_2 + \deltat\\
    &R^{-1}k_1+\delta = \Rt^{-1}k + \deltat.
  \end{align*}
  Multiplying the equations above from the
  left with $\Rt$ and $R$, respectively, one obtains
  \begin{align}
    &\Rt R^{-1}k+\Rt(\delta-\deltat) = k_2\in\integers^D\label{eq:RtL1}\\
    &R\Rt^{-1}k+R(\deltat-\delta) = k_1\in\integers^D\label{eq:RLt1}
  \end{align}
  for all $k\in\integers^D$. First let us show that
  $R(\deltat-\delta)\in\integers^D$. If
  $R(\deltat-\delta)\notin\integers^D$ then it follows from
  \eqref{eq:RLt1} that $R\Rt^{-1}k\notin\integers^D$ for all
  $k\in\integers^D$ which contradicts the fact that
  $R\Rt^{-1}\in\Mat_D(\rationals)$. Hence,
  $R(\deltat-\delta)\in\integers^D$.  It then follows from
  \eqref{eq:RLt1} that $R\Rt^{-1}k\in\integers^D$ for all
  $k\in\integers^D$, which implies that
  $R\Rt^{-1}\in\Mat_D(\integers)$. A similar argument using
  \eqref{eq:RtL1} shows that $\Rt R^{-1}\in\Mat_D(\integers)$ from
  which we conclude that $R\Rt^{-1}\in\GL_D(\integers)$.
\end{proof}

\noindent The above result provides us with a necessary and sufficient
condition for modules of the form $\Ssh(R,\delta)$ to be isomorphic. In
the following, we shall obtain a more concrete description of the
equivalence classes of such modules, by finding representatives for
each equivalence class in a systematic way.  Specifying a module
$\Ssh(R,\delta)$ amounts to choosing $R\in\Matbar_D(\integers)$ and
$\delta\in\reals^D$ implying that the set of all such modules can be
parametrized by $\Matbar_D(\integers)\times\reals^D$. Let us now
introduce a group action $\alpha$ on
$\Matbar_D(\integers)\times \reals^D$ that will induce equivalence
classes corresponding to isomorphism classes of modules. Namely, for
the direct product of the groups $\GL_D(\integers)$ (with
multiplicative group structure) and $\integers^D$ (with additive group
structure), one sets for $U\in\GL_D(\integers)$ and $n\in\integers^D$
\begin{align*}
  &\alpha_{U,n}(R,\delta) = (UR,\delta+R^{-1}n)
\end{align*}
for $(R,\delta)\in\Matbar_D(\integers)\times\reals^D$. One may readily
check that this is indeed a (left) group action.  With $\Rt=UR$ and
$\deltat=\delta+R^{-1}n$, Proposition~\ref{prop:discrete.module.iso}
immediately implies that
\begin{align*}
  \Ssh(R,\delta)\simeq \Ssh\paraa{\alpha_{U,n}(R,\delta)}=\Ssh(UR,\delta+R^{-1}n),
\end{align*}
showing that the isomorphism class of the module $\Ssh(R,\delta)$ is
preserved by the group action. Hence, pairs $(R,\delta)$ in the same
orbit of the action correspond to isomorphic modules. The group action
$\alpha$ induces an equivalence relation $\sim_\alpha$ on
$\Matbar_D(\integers)\times\reals^D$, and we denote the set of orbits
by $\Matbar_D(\integers)\times\reals^D\slash \sim_\alpha$.

\begin{proposition}\label{prop:Shiso.U}
  Let $\Ahq(\F)$ be a shift algebra such that $\F$ separates points of
  $\reals^D$. Then $\Ssh(R,\delta)\simeq\Ssh(\Rt,\deltat)$ if and only
  if there exists $U\in\GL_D(\integers)$ and $n\in\integers^D$ such
  that $\Rt=UR$ and $\deltat=\delta+R^{-1}n$.
\end{proposition}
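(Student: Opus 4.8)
The plan is to show that the conditions $R\tilde{R}^{-1}\in\GL_D(\integers)$ together with $R(\delta-\tilde\delta)\in\integers^D$ from Proposition~\ref{prop:discrete.module.iso} are equivalent to the existence of $U\in\GL_D(\integers)$ and $n\in\integers^D$ with $\tilde R=UR$ and $\tilde\delta=\delta+R^{-1}n$. Since Proposition~\ref{prop:discrete.module.iso} already tells us that $\Ssh(R,\delta)\simeq\Ssh(\tilde R,\tilde\delta)$ iff $R\tilde R^{-1}\in\GL_D(\integers)$ and $R(\delta-\tilde\delta)\in\integers^D$, the whole statement reduces to a purely algebraic equivalence of these two pairs of conditions, with no module theory left to do.

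First I would prove the ``if'' direction: given $U\in\GL_D(\integers)$ and $n\in\integers^D$ with $\tilde R=UR$ and $\tilde\delta=\delta+R^{-1}n$, compute $R\tilde R^{-1}=R(UR)^{-1}=RR^{-1}U^{-1}=U^{-1}\in\GL_D(\integers)$, and $R(\delta-\tilde\delta)=R(-R^{-1}n)=-n\in\integers^D$. Hence by Proposition~\ref{prop:discrete.module.iso} the modules are isomorphic. For the ``only if'' direction, assume $\Ssh(R,\delta)\simeq\Ssh(\tilde R,\tilde\delta)$; Proposition~\ref{prop:discrete.module.iso} gives $R\tilde R^{-1}\in\GL_D(\integers)$ and $R(\delta-\tilde\delta)\in\integers^D$. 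Set $U=(R\tilde R^{-1})^{-1}=\tilde R R^{-1}\in\GL_D(\integers)$, so that $UR=\tilde R R^{-1}R=\tilde R$. Then set $n=R(\delta-\tilde\delta)\in\integers^D$; since $\tilde\delta=\delta-R^{-1}n$ we do not quite get the stated form, so instead I would take $n=R(\tilde\delta-\delta)\in\integers^D$, giving $\tilde\delta=\delta+R^{-1}n$ directly.

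One subtlety to watch is the statement of Proposition~\ref{prop:discrete.module.iso} as printed: it is phrased for $\Ssh(R,\delta)\simeq\Ssh(\tilde R,\delta)$ with condition $R(\delta-\delta')\in\integers^D$, which appears to be a typo for $\Ssh(\tilde R,\tilde\delta)$ with $R(\delta-\tilde\delta)\in\integers^D$ (matching Proposition~\ref{prop:discrete.module.iso.cond} and the proof's use of $\deltat$). I would use the intended form. Beyond that caveat, the argument is entirely routine, and the only ``obstacle'' is bookkeeping the direction of the matrix inverses and the sign of $n$ so that the final equations read exactly $\tilde R=UR$ and $\tilde\delta=\delta+R^{-1}n$ rather than their inverses; choosing $U=\tilde R R^{-1}$ and $n=R(\tilde\delta-\delta)$ handles this cleanly. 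The proof can therefore be written in just a few lines, invoking Proposition~\ref{prop:discrete.module.iso} in both directions.
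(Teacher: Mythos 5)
Your proof is correct and follows essentially the same route as the paper: both reduce the statement to Proposition~\ref{prop:discrete.module.iso} and then perform the straightforward algebraic reformulation between the pair $(R\Rt^{-1}\in\GL_D(\integers),\ R(\delta-\deltat)\in\integers^D)$ and the pair $(\Rt=UR,\ \deltat=\delta+R^{-1}n)$. You are also right that the printed statement of Proposition~\ref{prop:discrete.module.iso} has a typo ($\Ssh(\Rt,\delta)$ and $\delta'$ should read $\Ssh(\Rt,\deltat)$ and $\deltat$); the paper's own proof and usage make the intended form clear.
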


\begin{proof}
  As already noted, if $\Rt=UR$ and $\deltat=\delta+R^{-1}n$ then it
  follows from Proposition~\ref{prop:discrete.module.iso} that
  $\Ssh(R,\delta)\simeq\Ssh(\Rt,\deltat)$. Now, assume that
  $\Ssh(R,\delta)\simeq\Ssh(\Rt,\deltat)$.
  Proposition~\ref{prop:discrete.module.iso} implies that
  $R\Rt^{-1}\in\GL_D(\integers)$, which implies that there exists
  $U\in\GL_D(\integers)$ such that $R\Rt^{-1}=U^{-1}$, giving
  $\Rt=UR$. Furthermore, Proposition~\ref{prop:discrete.module.iso}
  implies that there exists $n\in\integers^D$ such that
  $R(\delta-\deltat)=-n$, giving $\deltat=\delta+R^{-1}n$.
\end{proof}

\noindent
Let $M(\Ssh)$ denote the set of isomorphism classes of modules of the
type $\Ssh(R,\delta)$, and let $[\Ssh(R,\delta)]$ denote the
equivalence class of $\Ssh(R,\delta)$ in
$M(\Ssh)$. Proposition~\ref{prop:Shiso.U} implies that the map
$\iota:\Matbar_D(\integers)\times\reals^D\slash \sim_\alpha\to
M(\Ssh)$ given by
\begin{align*} 
  &\iota\paraa{[(R,\delta)]} = [\Ssh(R,\delta)]
\end{align*}
is well-defined and, moreover, a bijection. In view of this
bijection, is it possible to find a natural representative
$(R,\delta)$ for each isomorphism class in $M(\Ssh)$?  Let us recall
the Hermite normal form of an integer matrix with nonzero determinant
(see e.g. \cite{m:decomposable.groups}).

\begin{definition}
  A matrix $H\in\Matbar_D(\integers)$ is said to be
  in \emph{Hermite normal form} if
  \begin{enumerate}
  \item $H$ is an upper triangular matrix,
  \item $H_{ii}>0$ for $1\leq i\leq D$,
  \item $0\leq H_{ji}<H_{ii}$ for $1\leq j<i\leq D$.
  \end{enumerate}
\end{definition}

\noindent
It is well-known that for any matrix $M\in\Matbar_D(\integers)$ there
exists a matrix $U\in\GL_D(\integers)$ and a \emph{unique} matrix
$H\in\Matbar_D(\integers)$ in Hermite normal form such that
$M=UH$. Combining this result with Proposition~\ref{prop:Shiso.U}, we
conclude that every equivalence class in
$\Matbar_D(\integers)\times\reals^D\slash\sim_\alpha$ has a 
representative $(R,\delta)$ with $R$ in Hermite normal form. More
precisely, we formulate this as follows.

\begin{proposition}\label{prop:Ssh.parametrization}
  Let
  \begin{align*}
    HR_D = \{(H,\delta)\in\Matbar_D(\integers)\times\reals^D:\delta\in[0,1)^D\text{ and }H\text{ is in Hermite normal form}\}.
  \end{align*}
  The map $\hat{\iota}:HR_D\to M(\Ssh)$ given by $\hat{\iota}(H,\delta)=[\Ssh(H,H^{-1}\delta)]$ is a bijection.
\end{proposition}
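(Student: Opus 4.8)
The plan is to combine Proposition~\ref{prop:Shiso.U}, which already classifies the modules $\Ssh(R,\delta)$ up to isomorphism, with the existence and uniqueness of the Hermite normal form of an integer matrix. Since $\hat{\iota}$ is manifestly well-defined (for $(H,\delta)\in HR_D$ the pair $(H,H^{-1}\delta)$ lies in $\Matbar_D(\integers)\times\reals^D$), it suffices to show that every isomorphism class in $M(\Ssh)$ contains \emph{exactly one} module of the form $\Ssh(H,H^{-1}\delta)$ with $(H,\delta)\in HR_D$: existence gives surjectivity of $\hat{\iota}$, uniqueness gives injectivity. Equivalently, one may note that $\hat{\iota}=\iota\circ j$ where $j(H,\delta)=[(H,H^{-1}\delta)]$ and, since $\iota$ is already known to be a bijection onto $M(\Ssh)$, reduce to proving that $j\colon HR_D\to\Matbar_D(\integers)\times\reals^D\slash\sim_\alpha$ is a bijection.

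For surjectivity I would start from an arbitrary module $\Ssh(R,\delta')$ with $R\in\Matbar_D(\integers)$, $\delta'\in\reals^D$, and normalize in two steps. First normalize $R$: write $R=UH$ with $U\in\GL_D(\integers)$ and $H$ in Hermite normal form (which is possible by the quoted result), so that $H=U^{-1}R$ with $U^{-1}\in\GL_D(\integers)$; Proposition~\ref{prop:Shiso.U} with integer shift $n=0$ then gives $\Ssh(R,\delta')\simeq\Ssh(H,\delta')$. Next normalize $\delta'$: since $H$ is an integer matrix we have $\integers^D\subseteq H^{-1}\integers^D$, so there is a unique $\delta\in[0,1)^D$ with $n:=\delta-H\delta'\in\integers^D$; then $H^{-1}\delta=\delta'+H^{-1}n$, and Proposition~\ref{prop:Shiso.U} with $U=\mid$ and this $n$ gives $\Ssh(H,\delta')\simeq\Ssh(H,H^{-1}\delta)$. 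Composing, $\hat{\iota}(H,\delta)=[\Ssh(H,H^{-1}\delta)]=[\Ssh(R,\delta')]$ with $(H,\delta)\in HR_D$, as required.

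For injectivity I would take $(H,\delta),(H',\delta')\in HR_D$ with $\hat{\iota}(H,\delta)=\hat{\iota}(H',\delta')$, i.e. $\Ssh(H,H^{-1}\delta)\simeq\Ssh(H',(H')^{-1}\delta')$, and apply Proposition~\ref{prop:Shiso.U} to obtain $U\in\GL_D(\integers)$ and $n\in\integers^D$ with $H'=UH$ and $(H')^{-1}\delta'=H^{-1}\delta+H^{-1}n$. From $H'=UH$ with both $H,H'$ in Hermite normal form and $U$ unimodular, the uniqueness of the Hermite normal form forces $U=\mid$ and $H=H'$; the displacement relation then simplifies to $\delta'=\delta+n$, and since $\delta,\delta'\in[0,1)^D$ while $n\in\integers^D$, this forces $n=0$ and $\delta=\delta'$. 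Hence $\hat{\iota}$ is injective, and together with the previous step, a bijection.

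The argument is essentially bookkeeping layered on top of Proposition~\ref{prop:Shiso.U}, so I do not expect a genuine obstacle. The two points needing a little care are (i) the normalization of $\delta$, where the relevant fact is the inclusion $\integers^D\subseteq H^{-1}\integers^D$ (which is what makes passing to a representative in $[0,1)^D$ compatible with the available integer shifts $\delta'\mapsto\delta'+H^{-1}n$), and (ii) invoking the \emph{uniqueness} clause of the Hermite normal form to conclude $U=\mid$ from $H'=UH$ with $H,H'$ both in Hermite normal form — this is exactly what pins down $H=H'$ and, in turn, $\delta=\delta'$.
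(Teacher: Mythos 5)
Your argument is correct and follows essentially the same path as the paper's proof: normalize $R$ via the Hermite normal form using Proposition~\ref{prop:Shiso.U}, then normalize $\delta$ by passing to the fractional part of $H\delta'$, and for injectivity invoke uniqueness of the Hermite normal form followed by the constraint $\delta,\delta'\in[0,1)^D$. The remark about $\integers^D\subseteq H^{-1}\integers^D$ is a harmless but slightly indirect way to say that the fractional part of $H\delta'$ differs from $H\delta'$ by an integer vector; otherwise the two proofs match step for step.
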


\begin{proof}
  Let us first show that $\hat{\iota}$ is surjective. Let
  $[\Ssh(R,\delta)]\in M(\Ssh)$, and let $H$ be the Hermite normal
  form of $R$ implying that there exists $U\in\GL_D(\integers)$ such
  that $R=UH$. It follows from Proposition~\ref{prop:Shiso.U} that
  $[\Ssh(R,\delta)]=[\Ssh(H,\delta)]$. Next, let $\delta_0$ be the
  fractional part of $H\delta$; i.e. the unique $\delta_0\in[0,1)^D$
  such that $H\delta=n+\delta_0$ for some $n\in\integers^D$. Since
  \begin{align*}
    \delta = H^{-1}\delta_0+H^{-1}n,
  \end{align*}
  Proposition~\ref{prop:Shiso.U} gives that
  $[\Ssh(H,\delta)]=[\Ssh(H,H^{-1}\delta_0)]$, implying
  $\hat{\iota}(H,\delta_0)=[\Ssh(R,\delta)]$. Next, let us show that
  $\hat{\iota}$ is injective. To this end we assume that
  $\hat{\iota}(H_1,\delta_1)=\hat{\iota}(H_2,\delta_2)$, which is
  equivalent to
  $\Ssh(H_1,H^{-1}\delta_1)\simeq\Ssh(H_2,H^{-1}\delta_2)$. From
  Proposition~\ref{prop:Shiso.U} it follows that there exists
  $U\in\GL_D(\integers)$ such that $H_2=UH_1$ and $n\in\integers^D$
  such that $H_2^{-1}\delta_2=H_1^{-1}\delta_1+H_1^{-1}n$. Firstly,
  since the Hermite normal form is unique, it follows that $H_1=H_2$
  giving that $H_1^{-1}\delta_2=H_1^{-1}\delta_1+H_1^{-1}n$ which is
  equivalent to $\delta_2=\delta_1+n$. Secondly, since
  $\delta_1,\delta_2\in[0,1)^D$, it follows that $n=0$ and
  $\delta_1=\delta_2$. We conclude that $H_1=H_2$ and 
  $\delta_1=\delta_2$, which implies that $\hat{\iota}$ is injective.
\end{proof}

\noindent
Thus, the above result tells us that to every equivalence class of
modules, one may assign a unique matrix in Hermite normal form,
together with a unique element of $[0,1)^D$. Conversely, it also gives
a concrete way to determine whether the modules $\Ssh(R,\delta)$ and
$\Ssh(\Rt,\deltat)$ are isomorphic or not, by comparing the Hermite
normal forms $H$ and $\tilde{H}$, of $R$ and $\Rt$ respectively, as
well as the fractional parts of $H\delta$ and $\tilde{H}\deltat$. As
an illustration, let us apply
Proposition~\ref{prop:Ssh.parametrization} to a particular
example. Namely, let us consider modules $\Ssh(R,\delta)$ such that
$R$ is diagonal; i.e.
\begin{align*}
  R = \diag(p_1,p_2,\ldots,p_D)
\end{align*}
with $0\neq p_i\in\integers$ for $i=1,\ldots,D$. Note that, since $R$
is diagonal, it is already in Hermite normal form. First, we note that
in order for $\Ssh(R,\delta)\simeq\Ssh(\Rt,\deltat)$, with
$\Rt=\diag(q_1,\ldots,q_D)$, it is necessary that $p_i=q_i$ for
$i=1,\ldots,D$ by the uniqueness of the Hermite normal form. For fixed
diagonal $R$, the isomorphism classes are parametrized by
$\delta=R^{-1}\delta_0$ for $\delta_0\in[0,1)^D$, giving
\begin{align*}
  \delta\in[0,1/p_1)\times[0,1/p_2)\times\cdots\times
  [0,1/p_D).
\end{align*}
Moreover, for arbitrary $R\in\GL_D(\integers)$ we note that
$\Ssh(R,\delta)\simeq \Ssh(\mid,\delta)$ since the Hermite normal form
of an invertible matrix is the identity matrix. Thus, the isomorphism
classes of such modules can be represented by $\Ssh(\mid,\delta)$ with
$\delta\in[0,1)^D$.

Now, having a more or less complete understanding of the isomorphism
classes at hand, let us study when the module
$\Ssh(R,\delta)$ is simple.

\begin{proposition}
  Let $\Ahq(\F)$ be a shift algebra such that $\F$ separates points of
  $\reals^D$. Then the module $\Ssh(R,\delta)$ is simple if and only if
  $R\in\GL_D(\integers)$.
\end{proposition}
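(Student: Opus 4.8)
The plan is to prove the two directions separately, with the heart of the argument being the ``only if'' direction.

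For the ``if'' direction, suppose $R\in\GL_D(\integers)$. Then, by the parametrization discussion following Proposition~\ref{prop:Ssh.parametrization}, we have $\Ssh(R,\delta)\simeq\Ssh(\mid,\delta)$, so it suffices to show that $\Ssh(\mid,\delta)$ is simple. In this case $R^{-1}=\mid$ and the action on the basis $\{\ket{k}\}_{k\in\integers^D}$ becomes $f\ket{k}=\sum_{n}q^{N(k-n+\delta,n)}f_n\paraa{(k-n+\delta)\hbar}\ket{k-n}$. I would argue that given any nonzero $\xi=\sum_{k}c_k\ket{k}$ in a submodule $M$, one can produce every basis vector $\ket{l}$ inside $M$. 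First, pick some $k_0$ with $c_{k_0}\neq 0$. Acting with a suitable $U^n$ (a pure shift, $f_n$ constant) and using that $\F$ separates points to isolate a single value $(k_0-n+\delta)\hbar$, I would extract a scalar multiple of $\ket{k_0-Rn}=\ket{k_0-n}$ by an averaging/interpolation argument over the finitely many $k$ appearing in $\xi$; since for distinct $k$ the shifted arguments $(k+\delta)\hbar$ are distinct, one can choose $f\in\F$ separating them and take an appropriate linear combination of the elements $f^jU^n\xi$ to kill all but one term. This yields $\ket{k_0}\in M$ (after another pure shift), and then applying $U^n$ for all $n\in\integers^D$ gives $\ket{k}\in M$ for all $k$, so $M=\Ssh(\mid,\delta)$.

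For the ``only if'' direction, suppose $R\notin\GL_D(\integers)$, so $|\det R|\geq 2$. The key observation is that the action of $U^n$ sends $\ket{k}$ to a multiple of $\ket{k-Rn}$, so it only connects basis vectors whose indices differ by an element of the sublattice $R\integers^D\subseteq\integers^D$, which has index $|\det R|>1$. Multiplication by $f\in\F$ acts diagonally (it multiplies $\ket{k}$ by $f\paraa{(R^{-1}k+\delta)\hbar}$), hence preserves each coset of $R\integers^D$. Therefore, for any fixed coset $c+R\integers^D$, the subspace $M_c=\operatorname{span}\{\ket{k}:k\in c+R\integers^D\}$ is a proper nonzero $\AhqDF$-submodule. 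Since there are $|\det R|\geq 2$ such cosets, $\Ssh(R,\delta)$ is not simple.

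I expect the main obstacle to be the extraction step in the ``if'' direction: making rigorous the claim that from a finite linear combination $\xi=\sum_k c_k\ket{k}$ one can isolate a single basis vector using the module action. The clean way is to fix a pure shift $U^n$ that is invertible on indices (available since $R=\mid$), note $U^n\xi=\sum_k q^{N(k-n+\delta,n)}c_k\ket{k-n}$ is again supported on a finite set $S$, then use that $\F$ separates points to find $f\in\F$ taking distinct values at the (distinct) points $\{(k+\delta)\hbar:k\in S\}$; acting repeatedly by $f$ (with $n=0$) gives a family of vectors whose coefficient matrix is Vandermonde in those distinct values, hence invertible, so each $\ket{k}$ with $k\in S$ lies in $M$. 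One should double-check that $f$ acting with $n=0$ indeed gives the diagonal action with no cocycle factor (it does, since $N(\,\cdot\,,0)=0$), and that the relevant points are genuinely distinct, which holds because $\hbar>0$ and the indices are distinct integers after applying $R^{-1}=\mid$.
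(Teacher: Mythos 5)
Your proof is correct, and it differs from the paper's argument in both directions. For the ``only if'' direction, the paper shows that $\ket{0}$ is not cyclic because $R:\integers^D\to\integers^D$ fails to be surjective; you instead exhibit explicit proper nonzero submodules $M_c$ supported on the cosets of $R\integers^D$. This is cleaner and structurally anticipates Proposition~\ref{direct_sum_module}, where these cosets are exactly the orbits of the action $\alpha_R$ that produce the direct-sum decomposition. For the ``if'' direction, the paper argues directly for arbitrary $R\in\GL_D(\integers)$: starting from $v$ with $v_{k_0}\neq 0$, it forms $f_0U^{R^{-1}(k_0-l)}v$ with $f_0$ chosen to vanish on $\{(R^{-1}k+\delta)\hbar: v_k\neq 0,\ k\neq k_0\}$ and equal to $1$ at $(R^{-1}k_0+\delta)\hbar$. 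You instead first reduce to $R=\mid$ via the Hermite normal form classification (Proposition~\ref{prop:Ssh.parametrization} and the remark after it, which appear earlier in the paper, so the logical ordering is sound), and then isolate a single basis vector with a Vandermonde argument using $f,f^2,\dots$. Both interpolation devices work, but do note that your step ``since $\F$ separates points, find $f\in\F$ taking pairwise distinct values on a finite set'' uses slightly more than the raw pairwise definition of separation: for each pair $(i,j)$ one gets $g_{ij}\in\F$ with $g_{ij}(x_i)\neq g_{ij}(x_j)$, and a generic $\complex$-linear combination $\sum \lambda_{ij}g_{ij}\in\F$ then avoids the finitely many proper hyperplanes $\{f(x_i)=f(x_j)\}$. (The paper's bump-function step leans on the unital algebra structure of $\F$ in an analogous way.) This is a short argument but worth making explicit if you write it up.
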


\begin{proof}
  A module is simple if and only if every element of the module is
  cyclic.  Let us start by showing that if $R\notin\GL_D(\integers)$
  then there exists an element that is not cyclic. If
  $R\notin\GL_D(\integers)$ then there exists $l\in\integers^D$ such
  that $Rn\neq l$ for all $n\in\integers^D$ (i.e. $R$ is not
  surjective). Hence, the vector $\ket{0}$ is not cyclic since
  \begin{align*}
    \sum_{n\in\integers}f_nU^n\ket{0} =
    \sum_{n\in\integers}q^{N(\delta,n)}f_n(\delta\hbar)\ket{-Rn}\neq\ket{l}
  \end{align*}
  for any choice of $f_n\in\F$. The above argument shows that if
  $\Ssh(R,\delta)$ is simple, then $R\in\GL_D(\integers)$.

  Next, assume that $R\in\GL_D(\integers)$ and let
  \begin{align*}
    v=\sum_{k\in\integers^D}v_k\ket{k}
  \end{align*}
  be an arbitrary element of $\Ssh(R,\delta)$. Assuming $v$ to be
  non-zero, there exists $k_0\in\integers^D$ such that
  $v_{k_0}\neq 0$. Since $R\in\GL_D(\integers)$ one can form
  \begin{align}
    fU^{R^{-1}(k_0-l)}v = \sum_{k\in\integers^D}q^{N(R^{-1}k+\delta,R^{-1}(k_0-l))}
    f\paraa{(R^{-1}k+\delta)\hbar}v_k\ket{k-k_0+l)}.\label{eq:fURm1}
  \end{align}
  for arbitrary $f\in\F$ and $l\in\integers^D$.  Since $\F$ separates points, for
  every finite set $I\subseteq\reals^D$ and $\reals^D\ni x_0\notin I$
  there exists $f\in\F$ such that $f(x)=0$ for all $x\in I$ and
  $f(x_0)=1$. Hence, there exists a function $f_0\in\F$ such that
  \begin{align*}
    f_0\paraa{(R^{-1}k_0+\delta)\hbar} = 1\qand
    f_0\paraa{(R^{-1}k+\delta)\hbar} = 0
  \end{align*}
  for all $k\neq k_0$ such that $v_k\neq 0$ (which, by the compact
  support of $v$, is a finite set). From \eqref{eq:fURm1} it follows
  that
  \begin{align*}
    \frac{1}{v_{k_0}} q^{-N(R^{-1}k_0+\delta,R^{-1}(k_0-l))}
    f_0U^{R^{-1}(k_0-l)}v = \ket{l},
  \end{align*}
  implying that each $v\in\Ssh(R,\delta)$ is cyclic and, hence, that
  $\Ssh(R,\delta)$ is a simple module.
\end{proof}

\noindent
Thus, combined with the previous remark that
$\Ssh(R,\delta)\simeq\Ssh(\mid,\delta)$ for $R\in\GL_D(\integers)$,
all simple modules can be represented by $\Ssh(\mid,\delta)$ for
$\delta\in[0,1)$. Let us now show that for arbitrary
$R\in\Matbar_D(\integers)$, $\Ssh(R,\mid)$ is in general a direct sum
of such modules.

For $R\in\Matbar_D(\integers)$ we define an action $\alpha_R$ of
$\integers^D$ on $\integers^D$:
\begin{align*}
  \alpha_R(n)\cdot m = m+Rn,
\end{align*}
and it is easy to check that this is indeed a group action on
$\integers^D$. Hence, $\alpha_R$ splits $\integers^D$ into $N_R$
orbits. The integer $N_R$ is approximately equal to the number of
integer lattice points in the parallelotope spanned by the column
vectors of $R$, and bounded from above by the determinant of $R$. In
terms of the action $\alpha_R$, one can formulate the decomposition of
an arbitrary module in the following way.

\begin{proposition} \label{direct_sum_module}
  Let $R\in\Matbar_D(\integers)$ and $\delta\in\reals^D$. Then
  \begin{align*}
    \Ssh(R,\delta)\simeq
    \bigoplus_{k=1}^{N_R}\Ssh(\mid,\delta).
  \end{align*}
\end{proposition}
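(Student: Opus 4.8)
The plan is to decompose the module $\Ssh(R,\delta)$ by using the orbit decomposition of $\integers^D$ under the action $\alpha_R$ and then identifying each orbit-summand with a copy of $\Ssh(\mid,\delta)$. First I would fix a set of orbit representatives $\{m_1,\dots,m_{N_R}\}\subseteq\integers^D$ for the action $\alpha_R$, so that $\integers^D=\bigsqcup_{j=1}^{N_R}\pba{m_j+Rn:n\in\integers^D}$, and correspondingly write $\Ssh(R,\delta)=\bigoplus_{j=1}^{N_R}V_j$ as a direct sum of vector spaces, where $V_j$ is the span of $\{\ket{m_j+Rn}\}_{n\in\integers^D}$. The key point is that the module action $f\ket{k}=\sum_n q^{N(R^{-1}k-n+\delta,n)}f_n\paraa{(R^{-1}k-n+\delta)\hbar}\ket{k-Rn}$ shifts the index $k$ only by elements of $R\integers^D$, so each $V_j$ is a submodule; this is where the defining property of $\alpha_R$ is used.

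Next I would construct, for each $j$, an explicit isomorphism $\psi_j:V_j\to\Ssh(\mid,\delta)$. The natural candidate is $\psi_j(\ket{m_j+Rn})=c_{j,n}\ket{n_0+n}$ for a suitable fixed $n_0\in\integers^D$ and scalars $c_{j,n}\in U(1)$ chosen to absorb the cocycle phases; recalling that in $\Ssh(\mid,\delta)$ the action is $f\ket{k}=\sum_n q^{N(k-n+\delta,n)}f_n\paraa{(k-n+\delta)\hbar}\ket{k-n}$. Comparing the two actions, the function-evaluation arguments match on the nose once we translate $R^{-1}(m_j+Rn)+\delta = R^{-1}m_j + n + \delta$ against $(n_0+n)+\delta$, so one wants to choose the base point so that these agree up to the freedom allowed; since $R^{-1}m_j$ need not be integral, one does \emph{not} in general get equality of arguments, and here I would instead invoke Proposition~\ref{prop:delta.isomorphism} (or rather re-derive the relevant shift) to reduce $\Ssh(\mid,\delta+R^{-1}m_j)$ back to $\Ssh(\mid,\delta)$ — but note the statement has $\delta$ fixed on both sides, so more care is needed. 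The cleaner route: show each $V_j\simeq\Ssh(\mid,\delta+R^{-1}m_j)$ by the obvious relabelling $\ket{m_j+Rn}\mapsto\ket{n}$ together with a $U(1)$-valued cocycle correction, and then observe these intermediate modules are all isomorphic to $\Ssh(\mid,\delta)$. This last step is immediate from Proposition~\ref{prop:delta.isomorphism} applied with $R=\mid$: since $\mid\cdot(\delta+R^{-1}m_j - \delta)=R^{-1}m_j$ need not be an integer vector, Proposition~\ref{prop:delta.isomorphism} does \emph{not} directly apply. Hence the correct mechanism is Proposition~\ref{prop:delta.isomorphism} with the module parameter $R$ being $\mid$ — and indeed, tracing through $\Shd(0,0,\mid,\mid)$, a shift of $\delta$ by \emph{any} real vector yields an isomorphic module when the integer-matrix parameter is $\mid$, because $\mid\cdot(\delta-\delta')\in\integers^D$ fails in general; so instead I rely on the remark following Proposition~\ref{prop:delta.isomorphism} stating $\Shd(\Lambda_0,E,\Lambda_1,0)\simeq\S_\hbar^{\delta'}(\Lambda_0,E,\Lambda_1,0)$ — which is the $R=0$ case, not ours either. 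The honest statement is: $\S_\hbar^\delta(0,0,\mid,\mid)$ for different $\delta$ differing by an element of $\integers^D$ are isomorphic, and within a single $\alpha_R$-orbit the shift $R^{-1}m_j$ composed with the relabelling lands us back precisely because the orbit is $R\integers^D$-invariant and the phase factors are $U(1)$-valued.

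Concretely, then, the proof I would write is: (1) pick orbit representatives; (2) split into submodules $V_j$; (3) define $\phi_j:\Ssh(\mid,\delta)\to V_j$ by $\phi_j(\ket{n}) = \lambda_{j}(n)\,\ket{m_j + R(n-n_j)}$ for appropriate $n_j$ with $\lambda_j(n)\in U(1)$; (4) verify the intertwining relation $\phi_j(f\ket{n}) = f\phi_j(\ket{n})$ by matching both the function arguments (using $R^{-1}(m_j + R(n-n_j)) + \delta$ against the argument appearing in the $\Ssh(\mid,\delta)$-action, which forces the choice of $n_j$ and reveals the residual $R^{-1}m_j$ discrepancy) and the cocycle phases (which fixes $\lambda_j$ via a telescoping/coboundary argument using linearity of $N$); (5) assemble $\bigoplus_j\phi_j$ into the desired isomorphism. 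The main obstacle is step (4): the function-evaluation arguments carry the non-integral shift $R^{-1}m_j$, so a naive relabelling fails, and one must either show this shift is immaterial for the $\mid$-parametrized module (which is \emph{false} in general) or — the correct fix — observe that the module $\Ssh(\mid,\delta)$ does not actually see $\delta$ through a separate slot once $R=\mid$, since then $\Phi(0,k,\delta) = k+\delta$ and the orbit structure collapses; reconciling this requires carefully tracking that the isomorphism class of $V_j$ genuinely is $[\Ssh(\mid,\delta)]$ via Proposition~\ref{prop:Shiso.U}, i.e. checking $\mid\cdot\mid^{-1}\in\GL_D(\integers)$ and $\mid(\delta - (\delta+R^{-1}m_j)) = -R^{-1}m_j$ — which lands in $\integers^D$ only sometimes. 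So the genuinely careful version proves $V_j \simeq \Ssh(\mid, \delta + R^{-1}m_j)$ and then, since \emph{all} of these have Hermite normal form $\mid$, Proposition~\ref{prop:Ssh.parametrization} (uniqueness of Hermite form, with invertible matrices having Hermite form $\mid$) shows they all lie in the class $[\Ssh(\mid,\delta')]$ for the appropriate $\delta'$, and — here using that the statement's right-hand side is $\Ssh(\mid,\delta)$ — one concludes by noting $\Ssh(\mid,\delta')\simeq\Ssh(\mid,\delta)$ whenever $\delta-\delta'\in\integers^D$, while the fractional parts are handled by absorbing $R^{-1}m_j \bmod \integers^D$ into a redefinition that is consistent across all $N_R$ summands precisely because they partition $\integers^D$.
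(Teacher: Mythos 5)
Your approach mirrors the paper's: decompose $\integers^D$ into the $N_R$ orbits of $\alpha_R$, fix representatives $\hat{n}_1,\ldots,\hat{n}_{N_R}$, and show that the $j$-th orbit-submodule of $\Ssh(R,\delta)$ is isomorphic, via the relabelling $\ket{\hat{n}_j+Rm}\leftrightarrow\ket{m}$, to $\Ssh(\mid,\delta_j)$ with $\delta_j=R^{-1}\hat{n}_j+\delta$. That part is sound and is exactly what the paper's map $\phi$ does (the paper writes the map from the direct sum into $\Ssh(R,\delta)$, but it is the same bijection). The obstruction you flag is also genuine: $R^{-1}\hat{n}_j$ is not an integer vector unless $\hat{n}_j\in R\integers^D$, so neither Proposition~\ref{prop:delta.isomorphism} nor Proposition~\ref{prop:discrete.module.iso.cond} gives $\Ssh(\mid,\delta_j)\simeq\Ssh(\mid,\delta)$. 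However, your attempts to close the gap (the Hermite-form argument, ``absorbing'' the fractional shift across the summands) do not work, and in fact cannot: Proposition~\ref{prop:Ssh.parametrization} says that distinct fractional parts in $[0,1)^D$ give \emph{distinct} isomorphism classes, which cuts against you rather than for you.

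You should realise that the paper's own proof has precisely this gap. Its final sentence applies Proposition~\ref{prop:discrete.module.iso.cond} to the pair $\Ssh(\mid,\delta_k)$ and $\Ssh(\mid,\delta)$ but verifies the hypothesis as $R(\delta_k-\delta)=\hat{n}_k\in\integers^D$ using the original matrix $R$ of $\Ssh(R,\delta)$; since the two modules being compared both carry the matrix $\mid$, the condition actually required is $\mid(\delta_k-\delta)=R^{-1}\hat{n}_k\in\integers^D$, which fails whenever $\hat{n}_k\notin R\integers^D$. Under the section's standing assumption that $\F$ separates points, Proposition~\ref{prop:Shiso.U} shows $\Ssh(\mid,\delta_j)\simeq\Ssh(\mid,\delta_l)$ if and only if $\hat{n}_j\equiv\hat{n}_l\pmod{R\integers^D}$, so the $N_R$ summands are pairwise \emph{non}-isomorphic when $N_R>1$ and the proposition as written cannot hold. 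What you and the paper both actually establish (up to that last step) is
\begin{align*}
  \Ssh(R,\delta)\simeq\bigoplus_{k=1}^{N_R}\Ssh(\mid,\delta_k),\qquad \delta_k=R^{-1}\hat{n}_k+\delta,
\end{align*}
which is the statement that should replace Proposition~\ref{direct_sum_module}; it still suffices for the later uses in Propositions~\ref{prop:SshRd.projective} and~\ref{prop:torsion.module}, provided one chooses $\hat{n}_1=0$ so that $\delta_1=\delta$.
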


\begin{proof}
  Let $\hat{n}_1,\ldots\hat{n}_{N_R}\in\integers^D$ be elements of the
  disjoint orbits of the group action $\alpha_R$, respectively, and
  define for $\delta_k=R^{-1}\hat{n}_k+\delta$
  \begin{align*}
    &\phi:\bigoplus_{k=1}^{N_R}\Ssh(\mid,\delta_k)\to\Ssh(R,\delta)
  \end{align*}
  by
  \begin{align*}
    \phi:\bigoplus_{k=1}^{N_R}\sum_{m\in\integers^D}v_{k,m}\ket{m}
    \mapsto
    \sum_{k=1}^{N_R}\sum_{m\in\integers^D}v_{k,m}\ket{\hat{n}_k+Rm}
  \end{align*}
  where $v_{k,m}\in\complex$ for $1\leq k\leq N_R$ and
  $m\in\integers^D$. Let us now show that $\phi$ is a module
  homomorphism. One computes
  \begin{align*}
    \phi\parac{
    &f_nU^n\cdot \bigoplus_{k=1}^{N_R}\sum_{m\in\integers^D}v_{k,m}\ket{m}} =
    \phi\parac{
    \bigoplus_{k=1}^{N_R}\sum_{m\in\integers^D}
    q^{N(m+\delta_k,n)}f_n\paraa{(m+\delta_k)\hbar}v_{k,m}\ket{m-n}
    }\\
    &=\sum_{k=1}^{N_R}\sum_{m\in\integers^D}q^{N(m+\delta_k,n)}f_n\paraa{(m+\delta_k)\hbar}v_{k,m}
      \ket{\hat{n}_k+R(m-n)},
  \end{align*}
  and notes that
  \begin{align*}
    f_nU^n&\cdot\phi\parac{\bigoplus_{k=1}^{N_R}\sum_{m\in\integers^D}v_{k,m}\ket{m}}
    =f_nU^n\cdot\sum_{k=1}^{N_R}\sum_{m\in\integers^D}v_{k,m}\ket{\hat{n}_k+Rm}\\
          &=\sum_{k=1}^{N_R}\sum_{m\in\integers^D}q^{N(R^{-1}\hat{n}_k+m+\delta,n)}
            f_n\paraa{(R^{-1}\hat{n}_k+m+\delta)\hbar}v_{k,m}
            \ket{\hat{n}_k+R(m-n)}\\
    &=\phi\parac{
    f_nU^n\cdot \bigoplus_{k=1}^{N_R}\sum_{m\in\integers^D}v_{k,m}\ket{m}}
  \end{align*}
  since $\delta_k=R^{-1}\hat{n}_k+\delta$. Hence, $\phi$ is a left
  module homomorphism. Next, let us show that $\phi$ is
  surjective. Let $m\in\integers^D$ and let $k_0\in\{1,\ldots,N_R\}$ be such that
  $\hat{n}_{k_0}$ is in the same orbit as $m$, implying that there exists
  $m_0\in\integers^D$ such that $m=\hat{n}_{k_0}+Rm_0$. Then
  \begin{align*}
    \phi\paraa{\underbrace{0\oplus\cdots\oplus 0\oplus\ket{m_0}}_{k_0}\oplus 0\oplus\cdots\oplus 0}
    =\ket{\hat{n}_{k_0}+Rm_0} = \ket{m}
  \end{align*}
  implying that $\phi$ is surjective. Now, to show that $\phi$ is
  injective one assumes that
  \begin{align*}
    0=\phi\parac{\bigoplus_{k=1}^{N_R}\sum_{m\in\integers^D}v_{k,m}\ket{m}}
    =\sum_{k=1}^{N_R}\sum_{m\in\integers^D}v_{k,m}\ket{\hat{n}_k+Rm}.
  \end{align*}
  Since the orbits of $\alpha_R$ are disjoint, this implies that
  \begin{align*}
    \sum_{m\in\integers^D}v_{k,m}\ket{\hat{n}_k+Rm} = 0
  \end{align*}
  for $k=1,\ldots,N_R$. Moreover, since $\det R\neq 0$, it follows
  that $v_{k,m}=0$ for $k=1,\ldots,N_R$ and $m\in\integers^D$, giving
  \begin{align*}
    \bigoplus_{k=1}^{N_R}\sum_{m\in\integers^D}v_{k,m}\ket{m} = 0.
  \end{align*}
  Thus, we can conclude that $\phi$ is a module isomorphism implying that
  \begin{align*}
    \Ssh(R,\delta)\simeq
    \bigoplus_{k=1}^{N_R}\Ssh(\mid,\delta_k).    
  \end{align*}
  Finally, we note that $R(\delta_k-\delta)=\hat{n}_k\in\integers^D$
  which implies (by Proposition~\ref{prop:discrete.module.iso.cond}) that
  $\Ssh(\mid,\delta_k)\simeq\Ssh(\mid,\delta)$; hence, we have shown that
  $\Ssh(R,\delta)\simeq\Ssh(\mid,\delta)\oplus\cdots\oplus\Ssh(\mid,\delta)$.
\end{proof}

\noindent
Proposition~\ref{direct_sum_module} shows that the basic building
block of modules of the type $\Ssh(R,\delta)$ is given by
$\Ssh(\mid,\delta)$. Moreover, it is natural to ask whether or not
these modules are free, or projective? The next pair of results show
that, under the hypothesis that $\F$ separates points,
$\Ssh(R,\delta)$ is never a free module, and only projective if the
function algebra contains a $\delta$-like function.

\begin{proposition}\label{prop:SshRd.not.free}
  Let $\Ahq(\F)$ be a shift algebra such that $\F$ separates points of
  $\reals^D$. Then the module $\Ssh(R,\delta)$ is not free.
\end{proposition}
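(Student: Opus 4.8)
The plan is to show that if $\Ssh(R,\delta)$ were free, then it would be isomorphic to a (finite or countable) direct sum $\bigoplus_{j} \Ahq(\F)$ of copies of the algebra acting on itself by left multiplication, and to derive a contradiction by comparing the ``size'' or structural behaviour of the two modules along the $x$-variable. The key observation is that, as a vector space, $\Ssh(R,\delta)$ consists of compactly supported functions on $\integers^D$, and the action of a function $f\in\F$ on a basis vector $\ket{k}$ is by the scalar $f\paraa{(R^{-1}k+\delta)\hbar}$ (up to the shift part coming from $U^n$). In particular, for every $\xi\in\Ssh(R,\delta)$, the submodule $\F\xi$ (action by the subalgebra $\F\subseteq\Ahq(\F)$ of ``zero-shift'' elements) is contained in the finite-dimensional span of the finitely many basis vectors appearing in $\xi$. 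By contrast, in the free module $\Ahq(\F)$ itself, the element $1=U^0$ generates, under the action of $\F$ alone, a copy of $\F$, which — since $\F$ separates points of $\reals^D$ — is infinite dimensional.

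First I would make precise the following structural fact: for the left $\Ahq(\F)$-module $\Ssh(R,\delta)$ and any nonzero $\xi = \sum_{k\in K} v_k\ket{k}$ with $K\subseteq\integers^D$ finite, the vector space $\F\cdot\xi := \{ f\xi : f\in\F, f \text{ viewed as } fU^0 \}$ has dimension at most $|K|$, because each $f$ acts diagonally in the $\ket{k}$-basis: $f\cdot\xi = \sum_{k\in K} f\paraa{(R^{-1}k+\delta)\hbar} v_k \ket{k}$. Next I would observe that for the rank-one free module $\Ahq(\F)\cong \Ahq(\F)\cdot 1$, the subspace $\F\cdot 1 = \{ f U^0 : f\in\F\}$ is isomorphic as a vector space to $\F$, which is infinite dimensional: since $\F$ separates points, it cannot be finite dimensional (a finite-dimensional separating algebra of functions on $\reals^D$ would force $\reals^D$ to be finite). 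Hence in a free module $\bigoplus_j \Ahq(\F)$, there is an element (any generator) whose $\F$-orbit spans an infinite-dimensional subspace.

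Then I would argue by contradiction: suppose $\phi:\Ssh(R,\delta)\xrightarrow{\sim} \bigoplus_{j\in J}\Ahq(\F)$ is a module isomorphism, where $J$ is the index set of a basis. Pick any basis element $e$ of this free module; then $\F\cdot e$ is infinite dimensional by the previous paragraph. Let $\xi=\phi^{-1}(e)\in\Ssh(R,\delta)$; being compactly supported, $\xi$ is a finite combination of basis vectors $\ket{k}$, so $\F\cdot\xi$ is finite dimensional by the first structural fact. But $\phi$ restricts to a vector-space isomorphism $\F\cdot\xi \to \F\cdot e$ (since $\phi(f\xi)=f\phi(\xi)=fe$ for all $f\in\F$), contradicting the dimension count. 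Therefore no such isomorphism exists and $\Ssh(R,\delta)$ is not free.

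The main obstacle I anticipate is handling the case of an \emph{infinite} free basis $J$ cleanly: one must ensure that the notion of ``basis element'' of the free module $\bigoplus_{j\in J}\Ahq(\F)$ makes sense and that such an element indeed has infinite-dimensional $\F$-orbit — this is immediate once one notes that the canonical generator in a single free summand $\Ahq(\F)$ already works, and the inclusion of that summand into the direct sum is an $\F$-equivariant injection. A secondary point to get right is the claim that a point-separating subalgebra $\F\subseteq F(\reals^D,\complex)$ is necessarily infinite dimensional; this follows because if $\dim\F = N<\infty$, then the evaluation map $\reals^D\to\complex^N$ is injective by point-separation, yet its image, being determined by $N$ linear functionals on $\F$, forces $\reals^D$ to inject into an $N$-dimensional space in a way that is incompatible with separating infinitely many points (more directly: $N+1$ distinct points would yield $N+1$ linearly independent evaluation functionals on the $N$-dimensional space $\F$, a contradiction). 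With these two points settled, the argument is a short dimension comparison.
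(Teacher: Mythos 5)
Your proof is correct and takes a genuinely different route from the paper. The paper argues directly that every finite collection $v_1,\dots,v_N$ of module elements is linearly dependent over $\Ahq(\F)$: since each $v_i$ has finite support in the $\ket{k}$-basis and $\F$ (strongly) separates points, one finds a nonzero $f_i\in\F$ vanishing on the finitely many evaluation points $(R^{-1}k+\delta)\hbar$ appearing in $v_i$, whence $f_iv_i=0$ and so $\sum f_iv_i=0$ is a nontrivial relation. In other words, the paper exhibits every element as a torsion element (cf.\ Proposition~\ref{prop:torsion.module}), which immediately rules out a basis. You instead compare the vector-space dimension of the $\F$-orbit $\F\cdot\xi$: in $\Ssh(R,\delta)$ this is bounded by the size of the support of $\xi$ (because $\F$ acts diagonally), while in any free module the $\F$-orbit of a basis element is a copy of $\F$, which is infinite-dimensional. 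Both proofs exploit the same structural feature (the diagonal $\F$-action on a finitely supported element), but the paper's version is a bit shorter and avoids the auxiliary claim that $\F$ is infinite-dimensional, while yours packages the obstruction more conceptually as an invariant ($\dim_\complex\F\cdot\xi$) that distinguishes $\Ssh(R,\delta)$ from any free module.

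One small point to tighten: your justification that $\F$ must be infinite-dimensional needs a word. Mere point separation only says the evaluation maps $\mathrm{ev}_x\in\F^\ast$ are pairwise distinct, not a priori linearly independent; the phrase about ``forcing $\reals^D$ to inject into an $N$-dimensional space'' does not by itself give a contradiction. The clean fix is either to invoke independence of characters (Dedekind's lemma) for the ring homomorphisms $\mathrm{ev}_x:\F\to\complex$, or to use the strong separation property the paper uses elsewhere (for any finite $I\subset\reals^D$ and $x_0\notin I$ there is $f\in\F$ with $f|_I=0$ and $f(x_0)=1$), which produces $N+1$ manifestly independent functions for any $N$. With that clarified, your argument is complete.
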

\begin{proof}
For an arbitrary set of elements $v_1,\dots,v_N\in\Ssh(R,\delta)$ write
\begin{align*}
  v_i=\sum_{k\integers^D} v_i^k\ket{k}   
\end{align*}
and set
\begin{align*}
  I_i=\{k\in\integers^D:v_i^k\neq 0\}
\end{align*}
Note that the $I_i$ are finite sets. Since $\F$ separates points, one
can find non-zero $f_i\in\F$ such that $f_i\paraa{(R^{-1}k+\delta)\hbar}=0$
for $k\in I_i$, implying that
\begin{align*}
  f_iv_i = \sum_{k\in\integers^D}f_i\paraa{(R^{-1}k+\delta)\hbar}v_i^k\ket{k}=0
\end{align*}
and, consequently
\begin{align*}
  \sum_{i=1}^N f_i v_i=0.
\end{align*}
Hence $v_1,\dots,v_N$ do not form a basis of $\Ssh(R,\delta)$. Since the
set of vectors was arbitrary, we conclude that $\Ssh(R,\delta)$ is not free.
\end{proof}

\begin{proposition}\label{prop:SshRd.projective}
  Let $\Ahq(\F)$ be a shift algebra such that $\F$ separates points of
  $\reals^D$. Then the module $\Ssh(R,\delta)$ is a finitely generated
  projective module if and only if there exists $p_0\in\F$ such that
  \begin{align*}
    p_0(u) =
    \begin{cases}
      1 &\text{if } u=\hbar\delta\\
      0 &\text{if } u\neq\hbar\delta
    \end{cases}
  \end{align*}
\end{proposition}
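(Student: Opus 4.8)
\medskip
\noindent\textbf{Strategy.} Write $\A=\AhqDF$. The plan is to reduce to the case $R=\mid$ and then prove the two implications. For the reduction I would use Proposition~\ref{direct_sum_module}, which yields $\Ssh(R,\delta)\simeq\bigoplus_{k=1}^{N_R}\Ssh(\mid,\delta)$ with $N_R\geq 1$; since being finitely generated projective is inherited by finite direct sums and by passing to direct summands, $\Ssh(R,\delta)$ is finitely generated projective precisely when $\Ssh(\mid,\delta)$ is. For $R=\mid$ I would repeatedly exploit that $\F$ acts diagonally, $f\ket{k}=f\paraa{\hbar(k+\delta)}\ket{k}$, that $(gU^{k})\ket{0}=q^{N(\delta-k,k)}g\paraa{\hbar(\delta-k)}\ket{-k}$ for $g\in\F$, and that $\Ssh(\mid,\delta)$ is simple --- hence cyclic, generated by $\ket{0}$ --- by the simplicity result proved above.

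\medskip
\noindent\textbf{The ``if'' direction.} Given such a $p_0$, I would first observe that $p_0$ is idempotent (it takes values in $\{0,1\}$), so $p_0U^{0}$ is an idempotent of $\A$, acting on $\Ssh(\mid,\delta)$ as the projection sending $\ket{0}$ to itself and $\ket{k}$ to $0$ for $k\neq 0$. The left ideal $\A p_0$ is then a direct summand of $\A$, since $a\mapsto ap_0$ is a left-$\A$-linear retraction of $\A$ onto it (this has to be checked directly, since $\A$ need not be unital, so one cannot simply split off a complementary idempotent $1-p_0$); hence $\A p_0$ is a cyclic, in particular finitely generated, projective module. Finally I would exhibit the isomorphism $\A p_0\to\Ssh(\mid,\delta)$, $ap_0\mapsto(ap_0)\ket{0}$: using $\hbar$-invariance of $\F$ one checks that $\{\Sh^{k}(p_0)U^{k}\}_{k\in\ZD}$ is a basis of $\A p_0$, and this map sends $\Sh^{k}(p_0)U^{k}$ to the nonzero multiple $q^{N(\delta-k,k)}\ket{-k}$ of $\ket{-k}$, hence carries a basis to a basis and is an isomorphism. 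Therefore $\Ssh(\mid,\delta)\simeq\A p_0$, and with it $\Ssh(R,\delta)$, is finitely generated projective.

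\medskip
\noindent\textbf{The ``only if'' direction.} Assume $\Ssh(\mid,\delta)$ is projective. The surjection $\psi:\A\to\Ssh(\mid,\delta)$, $a\mapsto a\ket{0}$, then splits through an $\A$-linear map $s$, and I would set $b=s(\ket{0})$, so that $b\ket{0}=\ket{0}$. Applying $s$ to $f\ket{0}=f(\hbar\delta)\ket{0}$ for $f\in\F$ and using $\A$-linearity gives $fb=f(\hbar\delta)b$ in $\A$; writing $b=\sum_{k}b_kU^{k}$ with $b_k\in\F$ and comparing $U^{k}$-components, this becomes $\paraa{f(u)-f(\hbar\delta)}b_k(u)=0$ for all $u\in\reals^D$, $k\in\ZD$ and $f\in\F$. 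Here --- the only point where the hypothesis is genuinely used --- separation of points forces $b_k(u)=0$ for every $u\neq\hbar\delta$, i.e. each $b_k$ is supported at the single point $\hbar\delta$. Expanding $b\ket{0}=\sum_k q^{N(\delta-k,k)}b_k\paraa{\hbar(\delta-k)}\ket{-k}$ and equating with $\ket{0}$, every $k\neq 0$ term vanishes automatically (since $b_k$ is evaluated away from $\hbar\delta$) while the $k=0$ term gives $b_0(\hbar\delta)=1$. Hence $p_0:=b_0\in\F$ is the desired function.

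\medskip
\noindent\textbf{Main obstacle.} The delicate step is the ``only if'' direction: projectivity enters only to produce the splitting $s$, and the crux is then converting the algebraic identity $fb=f(\hbar\delta)b$ into the geometric statement that the coefficient functions of $b$ are supported at $\hbar\delta$ --- exactly here the separation-of-points hypothesis is needed, and without it the conclusion genuinely fails. A minor technical nuisance is the possible non-unitality of $\A$, which forces the ``if'' direction to verify the retraction onto $\A p_0$ directly rather than via the idempotent $1-p_0$.
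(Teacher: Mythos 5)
Your proposal is correct and follows essentially the same route as the paper: reduce to $R=\mid$ via Proposition~\ref{direct_sum_module}, identify $\Ssh(\mid,\delta)$ with $\A p_0$ for the ``if'' direction, and for the ``only if'' direction exploit the identity $fb=f(\hbar\delta)b$ together with separation of points to force the coefficients of a lift of $\ket{0}$ to be supported at $\hbar\delta$. The only cosmetic differences are that you obtain the lift $b$ from a splitting of the surjection $\A\to\Ssh(\mid,\delta)$ rather than from an isomorphism $\Ssh(\mid,\delta)\simeq\A^Np$ (which lets you read off $b_0(\hbar\delta)=1$ directly instead of normalizing as the paper does), and that you verify $\A p_0\simeq\Ssh(\mid,\delta)$ by matching $\complex$-bases rather than checking well-definedness, injectivity and surjectivity term by term.
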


\begin{proof}
  We proceed by showing the statement for $R=\mid$. It then follows
  from Proposition~\ref{direct_sum_module} that the same statement
  holds for arbitrary $R$, since a direct sum of modules is projective
  if and only if each factor is projective.

  First, assume that $p_0\in\F$. Let us construct a map
  $\phi:\A p_0\to \Ssh(\mid,\delta)$ by setting
  \begin{align*}
    \phi(f_nU^np_0) = f_nU^n\ket{0}.
  \end{align*}
  To show that $\phi$ is well-defined, one needs to prove that
  if $f_nU^np_0=0$ then $f_nU^n\ket{0}=0$. Assuming $f_nU^np_0=0$ one
  finds that
  \begin{align*}
    f_n(u)U^np_0(u)=0\implies
    f_n(u)p_0(u+n\hbar)U^n = 0\implies
    f_n(u)p_0(u+n\hbar) = 0
  \end{align*}
  for all $n\in\integers^D$. By the definition of $p_0$ it follows
  that $f_n\paraa{(\delta-n)\hbar}=0$ for all $n\in\integers^D$. Thus, one obtains
  \begin{align*}
    f_nU^n\ket{0} = q^{N(\delta-n,n)}f_n\paraa{(\delta-n)\hbar}\ket{-n}=0
  \end{align*}
  showing that $\phi$ is indeed well-defined; moreover, $\phi$ is
  clearly a left module homomorphism. To show that $\phi$ is
  injective, one assumes that $\phi(f_nU^np_0)=0$, giving
  \begin{align*}
    q^{N(\delta-n,n)}f_n\paraa{(\delta-n)\hbar}\ket{-n} = 0\implies
    f_n\paraa{(\delta-n)\hbar}=0
  \end{align*}
  for all $n\in\integers^D$. It follows that
  \begin{align*}
    f_n(u)U^np_0(u) = f_n(u)p_0(u+n\hbar)U^n = 0 
  \end{align*}
  since $p_0(u+n\hbar)\neq 0$ only if $u=(\delta-n)\hbar$. Hence,
  $\phi$ is injective. To prove that $\phi$ is surjective one simply
  notes that
  \begin{align*}
    \phi\paraa{q^{N(n+\delta,n)}U^{-n}p_0} = q^{N(n+\delta,n)}q^{N(n+\delta,-n)}\ket{n}
    =\ket{n}
  \end{align*}
  implying that $\phi$ is surjective. Thus, if $p_0\in\F$ then
  $\Ssh(\mid,\delta)$ is isomorphic to $\A p_0$, showing that
  $\Ssh(\mid,\delta)$ is a finitely generated projective module.

  Next, let us assume that $\Ssh(\mid,\delta)$ is a finitely generated
  projective module, and show that $p_0\in\F$. Thus, we assume that
  there exists a left module isomorphism
  $\phi:\Ssh(\mid,\delta)\to\A^Np$ for some $N\geq 1$ and
  $p\in\Mat_N(\A)$. Since $\phi$ is an isomorphism, there exists a
  non-zero $m_0\in\A^Np$ such that $\phi(\ket{0})=m_0$. Let
  $\{e_i\}_{i=1}^N$ be a basis of $\A^N$, and write $m_0 = m_0^ie_i$
  with $m_0^i = (m_0^i)_nU^n$ for $i=1,\ldots,N$. Furthermore, since
  $\phi$ is a module homomorphism, one obtains
  \begin{align*}
    f(\hbar\delta)m_0 = \phi\paraa{f(\hbar\delta)\ket{0}}
    = \phi(f\ket{0})
    = f\phi(\ket{0})
    = fm_0
  \end{align*}
  for all $f\in\F$, implying that
  \begin{align*}
    \paraa{f(u)-f(\hbar\delta)}(m_0^i)_n(u) = 0
  \end{align*}
  for $f\in\F$, $i=1,\ldots,N$ and $n\in\integers^D$. Since
  $m_0\neq 0$ there exist $i_0\in\{1,\ldots,N\}$ and
  $n_0\in\integers^D$ such that $\tilde{m}=(m_0^{i_0})_{n_0}\neq
  0$. Since $\F$ separates points one can, for every
  $\hbar\delta\neq u_0\in\reals^D$, find $f_0\in\F$ such that
  $f_0(u_0)-f_0(\hbar\delta)\neq 0$, implying that $\tilde{m}(u)=0$
  for every $u\neq\hbar\delta$. Since $\tilde{m}\neq 0$, one
  necessarily has $\tilde{m}(\hbar\delta)\neq 0$. Thus, one can define
  $p_0(u)=\tilde{m}(u)/\tilde{m}(\hbar\delta)$, proving that $p_0\in\F$.
\end{proof}

\noindent
For instance, if $\F$ consists of continuous functions,
Proposition~\ref{prop:SshRd.projective} implies that $\Ssh(R,\delta)$
is not a finitely generated projective module. In fact, for such
function algebras, these modules are in some sense far from being
projective. This is made precise in the following result.

\begin{proposition}\label{prop:torsion.module}
  Let $\Ahq(\F)$ be a shift algebra. The module $\Ssh(R,\delta)$ is a
  torsion module if and only if there exists $f\in\F$ and
  $k_0\in\integers^D$ such that $fg\neq 0$ for all nonzero $g\in\F$, and
  $f((k_0+\delta)\hbar)=0$. 
\end{proposition}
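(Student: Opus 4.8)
\noindent
The plan is to establish the two implications separately, reducing throughout to the basic module $\Ssh(\mid,\delta)$. I take ``torsion'' in the standard sense: every element is annihilated by a regular element (non-zero-divisor) of $\AhqDF$. A preliminary remark I will use is that, for $f\in\F$, the hypothesis $fg\neq 0$ for all nonzero $g\in\F$ (i.e.\ $f$ is a non-zero-divisor in the commutative algebra $\F$) implies that $fU^0$ is a regular element of $\AhqDF$; this is immediate from the product formula, using that each $\Sh^n$ is an automorphism of $\F$.

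For the direction $(\Leftarrow)$, assume $f\in\F$ is a non-zero-divisor with $f((k_0+\delta)\hbar)=0$. The observation to exploit is that on $\Ssh(\mid,\delta)$ the action of any $h\in\F$ is diagonal, $h\ket{k}=h((k+\delta)\hbar)\ket{k}$ by \eqref{eq:onlyn.rep.R1}, while for $m\in\integers^D$ the shifted function $\Sh^m f\in\F$ is still a non-zero-divisor (being an automorphic image of $f$) and vanishes at $((k_0-m)+\delta)\hbar$. Hence for a finitely supported $v=\sum_k v_k\ket{k}\in\Ssh(\mid,\delta)$ the function $g:=\prod_{k:\,v_k\neq 0}\Sh^{k_0-k}f\in\F$ is a non-zero-divisor vanishing at every $(k+\delta)\hbar$ with $v_k\neq 0$, so $gv=0$. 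To pass to general $R$, I would invoke the decomposition $\Ssh(R,\delta)\simeq\bigoplus_{j=1}^{N_R}\Ssh(\mid,\delta)$ of Proposition~\ref{direct_sum_module}: writing an element of $\Ssh(R,\delta)$ through this isomorphism and letting $g$ be the product of $\Sh^{k_0-k}f$ over the finite union of the supports of all summand components, the single regular element $g$ annihilates that element. Hence $\Ssh(R,\delta)$ is torsion.

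For the direction $(\Rightarrow)$, assume $\Ssh(R,\delta)$ is torsion, so $\ket{0}$ is annihilated by some regular $s=\sum_n s_nU^n$. Since $\det R\neq 0$, the vectors $\ket{-Rn}$ are distinct, and reading off coefficients in the module structure of Proposition~\ref{prop:left.module} gives $s_n((\delta-n)\hbar)=0$ for every $n$. The key step is to consider $c:=(s^\ast s)_0\in\F$, the $U^0$-component of $s^\ast s$: a short computation from \eqref{alg_involution}, using $N(k,-k)+N(k,k)=N(k,0)=0$, yields $c=\sum_n\Sh^{-n}(|s_n|^2)$, i.e.\ $c(u)=\sum_n|s_n(u-n\hbar)|^2$, so that $c(\delta\hbar)=\sum_n|s_n((\delta-n)\hbar)|^2=0$. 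Finally I would check that $c$ is a non-zero-divisor in $\F$: if $cg=0$ then $c|g|^2=(cg)\bar g=0$, and since $c$ is a sum of non-negative functions this forces $\Sh^{-n}(s_n)\cdot g=0$, hence $s_n\cdot\Sh^n(g)=\Sh^n\big(\Sh^{-n}(s_n)\cdot g\big)=0$, for all $n$; therefore $s\cdot(gU^0)=\sum_n s_n\Sh^n(g)U^n=0$, and regularity of $s$ forces $g=0$. Then $f=c$ and $k_0=0$ verify the stated condition.

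I expect the obstacle to be the $(\Rightarrow)$ direction. The tempting move of simply extracting a nonzero coefficient $s_{n_0}$ of the annihilator and using $f=s_{n_0}$ does not work, because a regular element of a crossed product can have all of its coefficients being zero-divisors, so $s_{n_0}$ need not satisfy $s_{n_0}g\neq 0$ for all nonzero $g$. Passing to $s^\ast s$ is the device that both places a zero at the lattice point $\hbar\delta$ and, through the non-negativity of $|s_n|^2$, guarantees that the $U^0$-component is a non-zero-divisor. On the $(\Leftarrow)$ side the only delicate point is to make sure the annihilators are taken inside the commutative subalgebra $\F$, so that they remain regular under products and a single element suffices across all direct summands.
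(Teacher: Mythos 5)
Your proof is correct, and the $(\Rightarrow)$ direction takes a genuinely different and, I think, more defensible route than the paper. The paper's proof of necessity proceeds by choosing, for each $k$, a regular element $f_k=\sum_n f_{kn}U^n$ annihilating $\ket{k}$, and then simply asserting that ``since $f_k$ is a non zero divisor there exists $n_0$ such that $f_{kn_0}g\neq 0$ for all $0\neq g\in\F$.'' That step is exactly the one you flag as suspect, and rightly so: there is no a priori reason why a regular element of the crossed product must have a coefficient that is regular in $\F$, and the paper offers no argument for it. Your device of passing to $c:=(s^\ast s)_0=\sum_m \Sh^{-m}(|s_m|^2)$ supplies the missing candidate: the computation $c(\hbar\delta)=\sum_m|s_m((\delta-m)\hbar)|^2=0$ (using $\det R\neq 0$ to separate the $\ket{-Rn}$ and force each coefficient to vanish, and $N(n,n)+N(n,-n)=0$) gives the vanishing, while the positivity argument $cg=0\Rightarrow c|g|^2=0\Rightarrow \Sh^{-m}(s_m)g=0\ \forall m\Rightarrow s\cdot g=0\Rightarrow g=0$ gives regularity of $c$ in $\F$ directly from regularity of $s$. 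This uses the $\ast$-structure in an essential way, which the paper's argument does not. On the $(\Leftarrow)$ side you and the paper do essentially the same thing, except that the paper's formula $f_I\paraa{(k+\delta)\hbar}$ is really the $R=\mid$ action; you correctly route the general $R$ case through Proposition~\ref{direct_sum_module}, which cleans up that imprecision. One small formal remark: you should also note that a function $g\in\F$ which is regular in $\F$ is regular in $\AhqDF$ on both sides (left is immediate, right uses that $\Sh^n$ is an automorphism), so that the single $f_I\in\F$ you produce really is a regular element of the full algebra --- you gesture at this in your opening paragraph, and it is needed in both directions.
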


\begin{proof}
  The module $\Ssh(R,\delta)$ is a torsion module if for every
  $v\in\Ssh(R,\delta)$ there exists a non zero divisor $f_v\in\Ahq(\F)$ such that
  $f_vv=0$. Now, let $f\in\F$ be a function fulfilling the assumptions
  of Proposition~\ref{prop:torsion.module}. Defining
  \begin{align*}
    f_k(u) = (\Sh^{k_0-k}f)(u) = f\paraa{u+(k_0-k)\hbar}
  \end{align*}
  (which is clearly in $\F$ since $\F$ is assumed to be
  $\hbar$-invariant) it follows that
  $f_k((k+\delta)\hbar)=0$. Consequently, for any finite set
  $I\subseteq\integers^D$ the function
  \begin{align*}
    f_I = \prod_{k\in I}f_k
  \end{align*}
  fulfills $f_I((k+\delta)\hbar)=0$ for all $k\in I$.  We note that
  $f_I$ is not a zero divisor as an element of $\Ahq(\F)$ since $f_Ig\neq 0$ for all
  $0\neq g\in\F$. Thus given arbitrary $v\in\Ssh(R,\delta)$ with
  \begin{align*}
    v = \sum_{k\in\integers^D}v_k\ket{k}
  \end{align*}
  we let $I=\{k\in\integers^D:v_k\neq 0\}$ (which is a finite set) and find that
  \begin{align*}
    f_I v = \sum_{k\in I}v_kf_I\paraa{(k+\delta)\hbar}\ket{k} = 0.
  \end{align*}
  since $f_I((k+\delta)\hbar)=0$ for all $k\in I$.  Since $v$ was
  chosen arbitrarily, we conclude that $\Ssh(R,\delta)$ is a torsion
  module.

  Conversely, assume that $\Ssh(R,\delta)$ is a torsion module. From
  Proposition~\ref{direct_sum_module} it then follows that
  $\Ssh(\mid,\delta)$ is also a torsion module. In particular, for
  each $k\in\integers^D$, there exists a non zero divisor
  $f_{k}=f_{kn}U^n\in\Ahq(\F)$ such that
  \begin{align*}
    f_k\ket{k} = \sum_{n\in\integers^D}q^{N(k-n+\delta,n)}f_{kn}\paraa{(k-n+\delta)\hbar}\ket{k-n} = 0,
  \end{align*}
  implying that
  \begin{align*}
    f_{kn}\paraa{(k-n+\delta)\hbar} = 0
  \end{align*}
  for all $n,k\in\integers^D$. Now, since $f_k$ is assumed to be a non
  zero divisor there exists $n_0\in\integers^D$ such that
  $f_{kn_0}g\neq 0$ for all $0\neq g\in\F$. Hence, $f_{kn_0}\in\F$ has
  the desired properties, which concludes the proof.
\end{proof}

\subsection{Bimodules}

\noindent
In the previous section we have studied the structure of a class of
left $\AhqDF$-modules. In the following, we shall give these modules
the structure of both a right module and a bimodule. Moreover, we will
show that when $\Lambda_0\in\GL_D(\complex)$ and
$R\in\GL_D(\integers)$, given certain conditions on the function space
$\S$, the module $\Shd(\Lambda_0,E,\Lambda_1,R)$ is a free module of rank 1.

Let us start by introducing a class of right $\AhqDF$-modules in close
analogy with the left modules in the previous section.

\begin{proposition}\label{prop:right.module}
  Let $\Ahq(\F)$ be a shift algebra and let
  $\S\subseteq F(\reals^D\times\integers^D)$ be a $\F$-invariant
  subspace. If $\Gamma_0,\Gamma_1,F\in\Mat_{D}(\reals)$,
  $P\in\Mat_D(\integers)$ and $\delta\in\reals^D$ such that
  $\Gamma_0E+\Gamma_1P=\mid$, then $\S$ is a right
  $\A_{\hbar,q}(\F)$-module with
  \begin{align}\label{eq:right.module.def}
    (\xi f)(x,k) = \sum_{n\in\integers^D}q^{N(\Psi(x,k,\delta-n),n)}
    f_n\paraa{\Psi(x,k,\delta-n)\hbar}
    \xi\paraa{x-Fn,k-Pn}
  \end{align}
  for $f=f_nU^n\in\A_{\hbar,q}^D$, $\xi\in\S$ and
  $\Psi(x,k,\delta)=\Gamma_0x+\Gamma_1k+\delta$.
\end{proposition}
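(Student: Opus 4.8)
The plan is to verify directly that the formula \eqref{eq:right.module.def} defines a right action, i.e. that $(\xi f)g = \xi(fg)$ for $f = f_nU^n$ and $g = g_mU^m$, exactly mirroring the proof of Proposition~\ref{prop:left.module}. The key bookkeeping identity will be the analogue of $\Phi(x+En,k+Rn,\delta)=\Phi(x,k,\delta)+n$: here, since $\Gamma_0F+\Gamma_1P=\mid$ (note the statement writes $\Gamma_0E$, presumably a typo for $\Gamma_0F$), one computes $\Psi(x-Fn,k-Pn,\delta) = \Gamma_0x+\Gamma_1k+\delta - (\Gamma_0F+\Gamma_1P)n = \Psi(x,k,\delta)-n$, and more generally $\Psi(x-Fn,k-Pn,\delta-m) = \Psi(x,k,\delta-m)-n$. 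This shift-by-$n$ property is what makes the cocycle factors and the function arguments recombine correctly.

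First I would expand $\big((\xi f)g\big)(x,k)$. Applying the definition twice gives a double sum over $n$ (from $f$) and $m$ (from $g$); the inner application of $g$ evaluates $\xi$ at $(x - Fn - Fm, k - Pn - Pm) = (x - F(n+m), k - P(n+m))$, with a cocycle factor $q^{N(\Psi(x,k,\delta-n),n)}$ from the outer step and $q^{N(\Psi(x-Fn,k-Pn,\delta-m),m)}$ from the inner step, together with the product of the two function values $f_n(\Psi(x,k,\delta-n)\hbar)$ and $g_m(\Psi(x-Fn,k-Pn,\delta-m)\hbar)$. Using $\Psi(x-Fn,k-Pn,\delta-m)=\Psi(x,k,\delta-n-m)$ (combining the shift identity with a relabeling of the constant), the second function value becomes $g_m(\Psi(x,k,\delta-n-m)\hbar)$ and the second cocycle exponent becomes $N(\Psi(x,k,\delta-n)-n,\,m) = N(\Psi(x,k,\delta-n),m) - N(n,m)$ by linearity of $N$.

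Next I would expand $\big(\xi(f\cdot g)\big)(x,k)$, where $f\cdot g = \sum q^{N(n,m)} f_n (\Sh^n g_m) U^{n+m}$. Writing $p = n+m$ and applying the right action once, the term indexed by $p$ evaluates $\xi$ at $(x-Fp, k-Pp)$ with cocycle factor $q^{N(\Psi(x,k,\delta-p),p)} = q^{N(\Psi(x,k,\delta-n-m),n+m)}$ and function value $q^{N(n,m)} f_n(\Psi(x,k,\delta-p)\hbar)\,g_m(\Psi(x,k,\delta-p)\hbar + n\hbar)$, using $(\Sh^n g_m)(v) = g_m(v + n\hbar)$. The remaining task is to match the two expressions: one checks that $\Psi(x,k,\delta-n-m)\hbar + n\hbar = \Psi(x,k,\delta-m)\hbar$ (again by linearity of $\Psi$ in its last slot — wait, rather $\Psi(x,k,\delta-n-m)+n = \Psi(x,k,\delta-m)$), so the $f$-argument from the first expansion reads $\Psi(x,k,\delta-n)$ whereas here it is $\Psi(x,k,\delta-n-m)+$ nothing; the discrepancy is absorbed precisely because in the first computation the $f_n$ argument was $\Psi(x,k,\delta-n)$ and in the second the $g_m$ argument is shifted by $n$. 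Then one verifies the cocycle exponents agree: $N(\Psi(x,k,\delta-n),n) + N(\Psi(x,k,\delta-n),m) - N(n,m)$ must equal $N(\Psi(x,k,\delta-n-m),n+m) + N(n,m)$; expanding both via bilinearity of $N$ and using $\Psi(x,k,\delta-n-m) = \Psi(x,k,\delta-n) - m$ reduces this to an identity, where the $\pm N(n,m)$ terms and the cross terms $N(m,n)$, $N(n,m)$ combine correctly.

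The main obstacle — really the only one — is getting the cocycle exponents and the function-argument shifts to line up with the correct signs, since the right-module convention deliberately uses $\delta - n$ (rather than $\delta$) inside $\Psi$ and subtracts $Fn, Pn$ rather than adding. One must be careful that $N$ is not symmetric, so the terms $N(n,m)$ and $N(m,n)$ are genuinely different and the matching depends on the specific ordering built into \eqref{eq:N.cocycle.def}. I would also remark that well-definedness on $\S$ follows from $\F$-invariance of $\S$ exactly as in Proposition~\ref{prop:left.module} (the arguments $x - Fn$, $k - Pn$ stay in $\S$, and multiplication by $f_n(\Psi\cdot\hbar) \in \F$ evaluated pointwise preserves $\S$), and that linearity in $f$ is immediate. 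Once the two expansions are shown to coincide term by term, the proof is complete.
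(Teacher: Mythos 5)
Your overall plan — verify $(\xi f)g=\xi(fg)$ directly, using $\Gamma_0F+\Gamma_1P=\mid$ to get the shift identity $\Psi(x-Fn,k-Pn,\delta)=\Psi(x,k,\delta)-n$, exactly as for the left module — is the right one, and it is the paper's approach. But your expansion of $(\xi f)g$ has the nesting of $f$ and $g$ reversed, and this is not a cosmetic slip: it produces expressions that genuinely do not match $\xi(fg)$.

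Concretely, $(\xi f)g$ means first form $\eta=\xi f$ and then act by $g$. Writing out $(\eta g)(x,k)$ first, the \emph{outer} sum carries the $g$-data evaluated at the base point $(x,k)$, and only then is $\eta=\xi f$ evaluated at the shifted point $(x-Fm,k-Pm)$; so the $f$-cocycle and the argument of $f_n$ sit at $(x-Fm,k-Pm)$, not at $(x,k)$. You have it the other way around: you place the $f$-data at $(x,k)$ and the $g$-data at $(x-Fn,k-Pn)$, which is the expansion of $(\xi g)f$, not of $(\xi f)g$. With the correct nesting, $f_n$ is evaluated at $\Psi(x,k,\delta-n-m)\hbar$ and $g_m$ at $\Psi(x,k,\delta-m)\hbar$, and the combined cocycle exponent is $N(\Psi(x,k,\delta),n+m)-N(n,n)-N(m,n)-N(m,m)$; this is exactly what $\xi(fg)$ gives. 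With your ordering you instead get $f_n$ at $\Psi(x,k,\delta-n)\hbar$, $g_m$ at $\Psi(x,k,\delta-n-m)\hbar$, and a cocycle exponent containing $N(n,m)$ rather than $N(m,n)$ — a real mismatch since, as you yourself stress, $N$ is not symmetric. The ``discrepancy is absorbed'' step is where this error surfaces, and it is not in fact absorbed. There is also a secondary algebra slip: you wrote $\Psi(x-Fn,k-Pn,\delta-m)=\Psi(x,k,\delta-n)-n$, whereas the shift identity gives $\Psi(x-Fn,k-Pn,\delta-m)=\Psi(x,k,\delta-n)-m$; as a result, the cocycle ``identity'' you propose to verify, namely $N(\Psi(x,k,\delta-n),n)+N(\Psi(x,k,\delta-n),m)-N(n,m)=N(\Psi(x,k,\delta-n-m),n+m)+N(n,m)$, is simply false (expanding both sides leaves a residual $N(m,n)+N(m,m)-2N(n,m)\neq 0$ in general). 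Fixing the nesting order is the essential correction; once the $g$-sum is on the outside and the $f$-sum on the inside (evaluated at $(x-Fm,k-Pm)$), the function arguments and cocycle exponents line up with those of $\xi(fg)$ exactly as in the paper's proof, with no leftover terms.
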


\begin{proof}
  To show that \eqref{eq:right.module.def} defines a right module
  action, one needs to check that $\xi(fg)=(\xi f)g$ for
  $f,g\in\A_{\hbar,q}$ and $\xi\in\S$. One computes
  \begin{align*}
    \paraa{&\xi(fg)}(x,k)
    =\paraa{\xi(q^{N(n,m)}f_n(\Sh^ng_m)U^{m+n})}(x,k)\\
    &=q^{N(n,m)}q^{N(\Psi(x,k,\delta-n-m),n+m)}
      f_n(\Psi(x,k,\delta-n-m)\hbar)
      g_m(\Psi(x,k,\delta-m)\hbar)\times\\
    &\hspace{50mm}\times
      \xi\paraa{x-F(m+n),k-P(n+m)}\\
           &= q^{-N(m,n)-N(n,n)-N(m,m)}q^{N(\Psi(x,k,\delta),m+n)}
             f_n(\Psi(x,k,\delta-n-m)\hbar)\times\\
    &\hspace{30mm}\times
      g_m(\Psi(x,k,\delta-m)\hbar)\xi\paraa{x-F(m+n),k-P(n+m)},
  \end{align*}
  as well as
  \begin{align*}
    \paraa{&(\xi f)g}(x,k)
             = q^{N(\Psi(x,k,\delta-m),m)}
             g_m\paraa{\Psi(x,k,\delta-m)\hbar}
             (\xi f)(x-Fm,k-Pm)\\
           &= q^{N(\Psi(x,k,\delta-m),m)}
             q^{N(\Psi(x-Fm,k-Pm,\delta-n),n)}
             g_m\paraa{\Psi(x,k,\delta-m)\hbar}\times\\
           &\hspace{15mm}\times
             f_n\paraa{\Psi(x-Fm,k-Pm,\delta-n)\hbar}
             \xi\paraa{x-F(m+n),k-P(m+n)}
  \end{align*}
  Using that $\Gamma_0F+\Gamma_1P=\mid$, giving
  $\Psi(x-Fm,k-Pm,\delta)=\Psi(x,k,\delta-m)$, one obtains
  \begin{align*}
    \paraa{(\xi f)g}(x,k)
             &=q^{-N(m,n)-N(n,n)-N(m,m)}q^{N(\Psi(x,k,\delta),m+n)}
             g_m\paraa{\Psi(x,k,\delta-m)\hbar}\times\\
           &\hspace{10mm}\times
             f_n\paraa{\Psi(x,k,\delta-n-m)\hbar}
             \xi\paraa{x-F(m+n),k-P(m+n)}\\
    &=\paraa{\xi(fg)}(x,k),
  \end{align*}
  and we conclude that \eqref{eq:right.module.def} is a right module
  action on $\S$.
\end{proof}

\noindent
Thus, an $\F$-invariant subspace $\S$ carries both the structure of a
left module (as given in Proposition~\ref{prop:left.module}) and the
structure of a right module, as given above. The next result shows
that if the corresponding matrices defining the module structure are
compatible, then $\S$ is a bimodule.  More precisely, we formulate it
as follows.

\begin{proposition}\label{prop:bimodule}
  Let $\Ahq^D(\F)$ and $\A_{\hbar',q'}^D(\F')$ be shift algebras and
  let $\S\subseteq F(\reals^D\times\integers^D)$ be a subspace which
  is both $\F$-invariant and $\F'$-invariant. Furthermore,
  let $\Lambda_0,\Lambda_1,E,\Gamma_0,\Gamma_1,F\in\Mat_{D}(\reals)$,
  $R,P\in\Mat_D(\integers)$ and $\delta,\epsilon\in\reals^D$ such that
  \begin{alignat}{2}
    &\Lambda_0E+\Lambda_1R=\mid &\qquad  &\Gamma_0F+\Gamma_1P=\mid\label{eq:bimodule.cond.1}\\
    &\Lambda_0F+\Lambda_1P = 0 & &\Gamma_0E+\Gamma_1R =
    0.\label{eq:bimodule.cond.0}
  \end{alignat}
  Then $\S$ is a $\A_{\hbar,q}^D(\F)$-$\A_{\hbar',q'}^D(\F')$-bimodule with
  \begin{align}
    &(f\xi)(x,k)=\sum_{n\in\integers^D}
    q^{N(\Phi(x,k,\delta),n)}
    f_n\paraa{\Phi(x,k,\delta)\hbar}
    \xi(x+En,k+Rn)\\
    &(\xi g)(x,k) = \sum_{n\in\integers^D}(q')^{N(\Psi(x,k,\epsilon-n),n)}
    g_n\paraa{\Psi(x,k,\epsilon-n)\hbar'}
    \xi\paraa{x-Fn,k-Pn}
  \end{align}
  for $f=f_nU^n\in\A_{\hbar,q}^D$, $g=g_nU^n\in\A_{\hbar',q'}^D(\F')$,
  $\xi\in\S$, $\Phi(x,k,\delta)=\Lambda_0x+\Lambda_1k+\delta$ and
  $\Psi(x,k,\epsilon)=\Gamma_0x+\Gamma_1k+\epsilon$.
\end{proposition}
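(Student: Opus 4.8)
The plan is to verify the three required properties of a bimodule structure: that each of the two actions is a module action (which is already done in Propositions~\ref{prop:left.module} and \ref{prop:right.module}), and that the left and right actions commute, i.e. $(f\xi)g = f(\xi g)$ for all $f=f_nU^n\in\A_{\hbar,q}^D(\F)$, $g=g_mU^m\in\A_{\hbar',q'}^D(\F')$ and $\xi\in\S$. Note that conditions \eqref{eq:bimodule.cond.1} are exactly what makes the left and right actions well-defined (via Propositions~\ref{prop:left.module} and \ref{prop:right.module}), so the new content here is that the cross-conditions \eqref{eq:bimodule.cond.0} guarantee commutativity of the two actions.

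First I would record the two key consequences of \eqref{eq:bimodule.cond.0}. Since $\Lambda_0F+\Lambda_1P=0$, applying $\Phi(\cdot,\cdot,\delta)$ to the shifted argument $(x-Fn,k-Pn)$ gives
\begin{align*}
  \Phi(x-Fn,k-Pn,\delta)=\Lambda_0x+\Lambda_1k-(\Lambda_0F+\Lambda_1P)n+\delta=\Phi(x,k,\delta),
\end{align*}
so the left-action coefficients are unchanged under the right-action shift. Symmetrically, since $\Gamma_0E+\Gamma_1R=0$, one gets $\Psi(x+En,k+Rn,\epsilon)=\Psi(x,k,\epsilon)$, so the right-action coefficients are unchanged under the left-action shift. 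These two identities are the crux: they decouple the $q$-phases and the function-evaluation points of the two actions.

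Next I would expand both sides. Computing $(f\xi)g$ means applying the right action (with parameter $g_m$, shift $(-Fm,-Pm)$, phase $(q')^{N(\Psi(x,k,\epsilon-m),m)}$) to $f\xi$, then using the definition of $f\xi$ at the point $(x-Fm,k-Pm)$; by the identity $\Phi(x-Fm,k-Pm,\delta)=\Phi(x,k,\delta)$, the left-action phase $q^{N(\Phi(x,k,\delta),n)}$ and the evaluation point $\Phi(x,k,\delta)\hbar$ come out unchanged, leaving
\begin{align*}
  ((f\xi)g)(x,k)=\sum_{n,m}(q')^{N(\Psi(x,k,\epsilon-m),m)}q^{N(\Phi(x,k,\delta),n)}
  g_m\paraa{\Psi(x,k,\epsilon-m)\hbar'}f_n\paraa{\Phi(x,k,\delta)\hbar}\xi(x+En-Fm,k+Rn-Pm).
\end{align*}
Computing $f(\xi g)$ the other way and using $\Psi(x+En,k+Rn,\epsilon)=\Psi(x,k,\epsilon)$ to pull the right-action phase and evaluation point out unchanged yields the identical double sum (the scalar factors $f_n(\cdots)$ and $g_m(\cdots)$ commute since they are complex numbers, and $\xi(x+En-Fm,k+Rn-Pm)$ is manifestly symmetric in the way the two shifts enter). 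Hence $(f\xi)g=f(\xi g)$, and combined with Propositions~\ref{prop:left.module} and \ref{prop:right.module} this shows $\S$ is a bimodule.

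The main obstacle is purely bookkeeping: keeping track of which $\delta$-argument ($\delta$ versus $\delta-m$, and $\epsilon$ versus $\epsilon-n$) appears in each phase and each function evaluation, and checking that the two cancellation identities \eqref{eq:bimodule.cond.0} are applied at exactly the right places. There is no subtlety about the $q$-cocycle cross-terms here, because the left action uses $q$ and $N$ while the right action uses $q'$ and $N$ acting on disjoint index variables $n$ and $m$ — there is no $q^{N(n,m)}$-type coupling between them, so unlike the associativity proof of a single action, no cocycle identity \eqref{eq:cocycle.def} is needed. I would therefore present the two identities from \eqref{eq:bimodule.cond.0} explicitly, then carry out the two expansions and observe they coincide term by term.
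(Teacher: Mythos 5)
Your proof is correct and takes essentially the same approach as the paper: invoke Propositions~\ref{prop:left.module} and \ref{prop:right.module} for the individual actions, then use $\Lambda_0F+\Lambda_1P=0$ and $\Gamma_0E+\Gamma_1R=0$ to show $\Phi$ is invariant under the right-action shift and $\Psi$ under the left-action shift, which makes both expansions of $(f\xi)g$ and $f(\xi g)$ coincide. Your remark that no cocycle cross-term arises (because the left and right actions use disjoint phase variables $n$ and $m$) is a correct and clarifying observation not made explicit in the paper.
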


\begin{proof}
  We conclude from Proposition~\ref{prop:left.module} and
  Proposition~\ref{prop:right.module} that $\S$ is a left
  $\Ahq^D(\F)$-module and a right
  $\A_{\hbar',q'}^D(\F')$-module. Thus, it remains to show that
  $f(\xi g)=(f\xi)g$. One computes
  \begin{align*}
    \para{(f\xi)g}&(x,k) =
      (q')^{N(\Psi(x,k,\epsilon-m),m)}
      g_m\paraa{\Psi(x,k,\epsilon-m)\hbar'}(f\xi)(x-Fm,k-Pm)\\
    &=q^{N(\Phi(x-Fm,k-Pm,\delta),n)}(q')^{N(\Psi(x,k,\epsilon-m),m)}
      g_m\paraa{\Psi(x,k,\epsilon-m)\hbar'}\times\\
      &\qquad \times f_n\paraa{\Phi(x-Fm,k-Pm,\delta)\hbar}
        \xi(x-Fm+En,k-Pm+Rn)\\
    &=q^{N(\Phi(x,k,\delta),n)}(q')^{N(\Psi(x,k,\epsilon-m),m)}
      g_m\paraa{\Psi(x,k,\epsilon-m)\hbar'}\times\\
      &\qquad \times f_n\paraa{\Phi(x,k,\delta)\hbar}
        \xi(x-Fm+En,k-Pm+Rn)
  \end{align*}
  by using that $\Lambda_0F+\Lambda_1P=0$. On the other hand, one
  obtains
  \begin{align*}
    \paraa{f(\xi g)}
    &(x,k) =
      q^{N(\Phi(x,k,\delta),n)}
      f_n\paraa{\Phi(x,k,\delta)\hbar}
      (\xi g)(x+En,k+Rn)\\
    &=q^{N(\Phi(x,k,\delta),n)}
      (q')^{N(\Psi(x+En,k+Rn,\epsilon-m),m)}
      f_n\paraa{\Phi(x,k,\delta)\hbar}\times\\
    &\qquad \times g_m\paraa{\Psi(x+En,k+Rn,\epsilon-m)\hbar'}
      \xi(x-Fm+En,k-Pm+Rn)\\
    &=q^{N(\Phi(x,k,\delta),n)}
      (q')^{N(\Psi(x,k,\epsilon-m),m)}
      f_n\paraa{\Phi(x,k,\delta)\hbar}\times\\
    &\qquad \times g_m\paraa{\Psi(x,k,\epsilon-m)\hbar'}
      \xi(x-Fm+En,k-Pm+Rn)\\
    &=\para{(f\xi)g}(x,k)
  \end{align*}
  by using that $\Gamma_0E+\Gamma_1R = 0$. We conclude that $\S$ is a 
  $\A_{\hbar,q}^D(\F)$-$\A_{\hbar',q'}^D(\F')$-bimodule.
\end{proof}

\noindent
An immediate question arising from Proposition~\ref{prop:bimodule} is
whether or not one can find matrices satisfying conditions
\eqref{eq:bimodule.cond.1} and \eqref{eq:bimodule.cond.0}? The next
result gives an explicit solution to these equations in the case when
$\Lambda_0,\Gamma_0\in\GL_D(\complex)$ and $R,P\in\GL_D(\integers)$.

\begin{lemma}\label{lemma:bimodule.parameters}
  Let $\Lambda_0,\Gamma_0\in\GL_D(\complex)$,
  $\Lambda_1,\Gamma_1,E,F\in\Mat_D(\complex)$ and
  $R,P\in\GL_D(\integers)$. Then
  \begin{alignat*}{2}
    &\Lambda_0E+\Lambda_1R=\mid &\qquad  &\Gamma_0F+\Gamma_1P=\mid\\
    &\Lambda_0F+\Lambda_1P = 0 & &\Gamma_0E+\Gamma_1R = 0
  \end{alignat*}
  is equivalent to
  \begin{alignat*}{2}
    &\Gamma_0=-P^{-1}R\Lambda_0 &\qquad  &\Gamma_1=P^{-1}(\mid-R\Lambda_1)\\
    &E = \Lambda_0^{-1}(\mid-\Lambda_1R) & &F = -\Lambda_0^{-1}\Lambda_1P.
  \end{alignat*}
  Moreover, if $P=-R$ then the above system is equivalent to
  \begin{alignat*}{2}
    &\Gamma_0=\Lambda_0 &\qquad  &\Gamma_1=\Lambda_1-R^{-1}\\
    &E = \Lambda_0^{-1}(\mid-\Lambda_1R) & &F = \Lambda_0^{-1}\Lambda_1R.
  \end{alignat*}
\end{lemma}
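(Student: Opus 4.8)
The plan is to prove Lemma~\ref{lemma:bimodule.parameters} by direct linear algebra, solving the system of four matrix equations. The key observation is that the four conditions fall into two pairs: the ``$\Lambda$-pair''
\begin{align*}
  \Lambda_0E+\Lambda_1R=\mid\mathand\Lambda_0F+\Lambda_1P=0,
\end{align*}
which involves $\Lambda_0,\Lambda_1$ together with $E,F$, and the ``$\Gamma$-pair''
\begin{align*}
  \Gamma_0F+\Gamma_1P=\mid\mathand\Gamma_0E+\Gamma_1R=0.
\end{align*}
First I would treat the $\Lambda$-pair: since $\Lambda_0\in\GL_D(\complex)$, multiply each equation on the left by $\Lambda_0^{-1}$ and solve directly for $E$ and $F$, obtaining $E=\Lambda_0^{-1}(\mid-\Lambda_1R)$ and $F=-\Lambda_0^{-1}\Lambda_1P$. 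This step is an ``if and only if'': given $\Lambda_0,\Lambda_1,R,P$ with $\Lambda_0$ invertible, the $\Lambda$-pair holds precisely when $E,F$ are given by these formulas.

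Next I would substitute these expressions for $E$ and $F$ into the $\Gamma$-pair. The second $\Gamma$-equation becomes $\Gamma_0\Lambda_0^{-1}(\mid-\Lambda_1R)+\Gamma_1R=0$, i.e.\ $\Gamma_0\Lambda_0^{-1}+(\Gamma_1-\Gamma_0\Lambda_0^{-1}\Lambda_1)R=0$; the first becomes $-\Gamma_0\Lambda_0^{-1}\Lambda_1P+\Gamma_1P=\mid$, i.e.\ $(\Gamma_1-\Gamma_0\Lambda_0^{-1}\Lambda_1)P=\mid$. Introducing the auxiliary matrix $X=\Gamma_1-\Gamma_0\Lambda_0^{-1}\Lambda_1$, the two $\Gamma$-equations read $\Gamma_0\Lambda_0^{-1}=-XR$ and $XP=\mid$. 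Since $P\in\GL_D(\integers)$, the second gives $X=P^{-1}$; then the first gives $\Gamma_0=-P^{-1}R\Lambda_0$ (using invertibility of $\Lambda_0$), and unwinding the definition of $X$ gives $\Gamma_1=X+\Gamma_0\Lambda_0^{-1}\Lambda_1=P^{-1}+(-P^{-1}R)\Lambda_1=P^{-1}(\mid-R\Lambda_1)$. Conversely one checks that these formulas for $\Gamma_0,\Gamma_1,E,F$ satisfy all four original equations, which is a routine back-substitution. This establishes the first equivalence.

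For the final claim, set $P=-R$ and simply feed this into the formulas already derived: $\Gamma_0=-P^{-1}R\Lambda_0=-(-R)^{-1}R\Lambda_0=R^{-1}R\Lambda_0=\Lambda_0$; $\Gamma_1=P^{-1}(\mid-R\Lambda_1)=-R^{-1}+R^{-1}R\Lambda_1=\Lambda_1-R^{-1}$; $E=\Lambda_0^{-1}(\mid-\Lambda_1R)$ is unchanged; and $F=-\Lambda_0^{-1}\Lambda_1P=-\Lambda_0^{-1}\Lambda_1(-R)=\Lambda_0^{-1}\Lambda_1R$. So the $P=-R$ case is an immediate specialization and requires no new work.

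I do not anticipate a genuine obstacle here: the whole argument is elementary manipulation exploiting the invertibility of $\Lambda_0$ and $P$. The only point requiring a little care is bookkeeping—making sure each implication is stated as a genuine equivalence (both the forward substitution and the converse verification) and that one does not accidentally divide by a matrix that has not been assumed invertible; in particular $\Gamma_0$ and $R$ are not assumed invertible a priori, so one must derive $\Gamma_0=-P^{-1}R\Lambda_0$ rather than solve for $R$ or $\Gamma_0^{-1}$. Introducing the intermediate variable $X$ is the cleanest way to keep the two $\Gamma$-equations untangled.
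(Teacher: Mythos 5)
Your proof is correct and follows essentially the same route as the paper: solve the $\Lambda$-pair for $E,F$ using invertibility of $\Lambda_0$, substitute into the $\Gamma$-pair and solve for $\Gamma_0,\Gamma_1$ using invertibility of $P$, then specialize to $P=-R$. The auxiliary variable $X=\Gamma_1-\Gamma_0\Lambda_0^{-1}\Lambda_1$ is a tidy bookkeeping device but does not change the substance of the argument.
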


\begin{proof}
  It follows immediately from
  \begin{align*}
    &\Lambda_0E+\Lambda_1R=\mid\qand
    \Lambda_0F+\Lambda_1P = 0
  \end{align*}
  that
  \begin{align}
    &E = \Lambda_0^{-1}(\mid-\Lambda_1R)\qand
    F = -\Lambda_0^{-1}\Lambda_1P\label{eq:lemma.E.F}
  \end{align}
  since $\Lambda_0\in\GL_D(\complex)$. Inserting these equations into
  \begin{align*}
    \Gamma_0F+\Gamma_1P=\mid\qand
    \Gamma_0E+\Gamma_1R = 0
  \end{align*}
  gives
  \begin{align*}
    \Gamma_1 = P^{-1}+\Gamma_0\Lambda_0^{-1}\Lambda_1\qand
    \Gamma_0\Lambda_0^{-1}(\mid-\Lambda_1R)+\Gamma_1R = 0
  \end{align*}
  which are equivalent to
  \begin{align}
    \Gamma_1 = P^{-1}(\mid-R\Lambda_1)\qand
    \Gamma_0 = -P^{-1}R\Lambda_0\label{eq:lemma.G0G1}
  \end{align}
  proving the first part of the statement. Finally, equations
  \eqref{eq:lemma.E.F} and \eqref{eq:lemma.G0G1} give
  \begin{alignat*}{2}
    &\Gamma_0=\Lambda_0 &\qquad  &\Gamma_1=\Lambda_1-R^{-1}\\
    &E = \Lambda_0^{-1}(\mid-\Lambda_1R) & &F = \Lambda_0^{-1}\Lambda_1R.
  \end{alignat*}
  when setting $P=-R$.
\end{proof}

\noindent
Thus, with the help of Lemma~\ref{lemma:bimodule.parameters} one
can easily construct matrices satisfying \eqref{eq:bimodule.cond.1}
and \eqref{eq:bimodule.cond.0}; e.g. for $\Lambda_0=\mid$, $R=-P=\mid$
and $\Lambda_1=2\cdot\mid$, one finds that
\begin{align*}
  &(\Lambda_0,E,\Lambda_1,R) = (\mid,-\mid,2\cdot\mid,\mid) \\
  &(\Gamma_0,F,\Gamma_1,P) = (\mid,2\cdot\mid,\mid,-\mid)
\end{align*}
define a bimodule structure on $\S$.  In
Section~\ref{sec:modules.shift.algebras} we studied the special cases
when $\Lambda_0=E=0$ or $\Lambda_1=R=0$ in detail. In the context of
bimodules, we note that there are no solutions of
\eqref{eq:bimodule.cond.1} and \eqref{eq:bimodule.cond.0} with
$\Lambda_0=\Gamma_0=0$ or $\Lambda_1=\Gamma_1=0$.  As previously
mentioned, when $\Lambda_0\in\GL_D(\complex)$ and
$R\in\GL_D(\integers)$, the module $\Shd(\Lambda_0,E,\Lambda_1,R)$
turns out to be a free module of rank 1. As a first step in proving
this statement, let us construct a module homomorphism
$\phi:\AhqDF\to\Shd(\Lambda_0,E,\Lambda_1,R)$.

\begin{proposition}\label{prop.algbra.homom.bimodule}
  Let $\Ahq^D(\F)$ be a shift algebra and let
  $\S\subseteq F(\reals^D\times\integers^D)$ be a $\F$-invariant
  subspace.  If $R\in\GL_D(\integers)$ and
  $f(\Phi(x,k,\delta)\hbar)\in\S$, for all $f\in\F$ and
  $\delta\in\reals^D$, then the map
  $\phi:\Ahq^D(\F)\to\Shd(\Lambda_0,E,\Lambda_1,R)$, defined as
  \begin{align}\label{eq:hom.alg.module}
    \phi(f_nU^n)(x,k) = q^{-N(\Phi(x,k,\delta),R^{-1}k)}
    f_{-R^{-1}k}\paraa{\Phi(x,k,\delta)\hbar},
  \end{align}
  is a left module homomorphism. Moreover, if $\S$ is a
  $\Ahq^D(\F)$-$\Ahq^D(\F)$-bimodule, as defined in
  Proposition~\ref{prop:bimodule}, with $P=-R$ and
  $\epsilon=\delta$, then $\phi$ is a bimodule homomorphism.
\end{proposition}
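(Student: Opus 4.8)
The plan is to verify directly that the map $\phi$ defined in \eqref{eq:hom.alg.module} intertwines the algebra multiplication on the left with the module action on $\Shd(\Lambda_0,E,\Lambda_1,R)$, and then, under the bimodule hypothesis, that it also intertwines right multiplication with the right module action. First I would check that $\phi$ is well-defined, i.e. that $\phi(f_nU^n)(x,k)$ lies in $\S$: since $f_{-R^{-1}k}\in\F$ whenever $R^{-1}k\in\integers^D$ (which holds because $R\in\GL_D(\integers)$), the function $x\mapsto f_{-R^{-1}k}(\Phi(x,k,\delta)\hbar)$ is in $\S$ by the hypothesis $f(\Phi(x,k,\delta)\hbar)\in\S$, and the scalar factor $q^{-N(\Phi(x,k,\delta),R^{-1}k)}$ does not affect membership (it has modulus one and, expanded in $x$, it is a complex exponential times a constant, so $\F$-invariance of $\S$ handles it). Linearity in the coefficient functions is immediate from the formula.

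For the left module homomorphism property, the core computation is to expand $\phi\paraa{(f_nU^n)\cdot(g_mU^m)}(x,k)=\phi\paraa{q^{N(n,m)}f_n(\Sh^ng_m)U^{n+m}}(x,k)$ using \eqref{eq:hom.alg.module}, and separately expand $\paraa{(f_nU^n)\phi(g_mU^m)}(x,k)$ using the left action formula from Proposition~\ref{prop:left.module}. On the first side one gets a coefficient evaluated at $\Phi(x,k,\delta)\hbar$ with $U$-index $-R^{-1}k$ applied to $n+m$, producing the constraint $n+m=-R^{-1}k$, hence $m=-R^{-1}k-n$; the $\Sh^n$ shifts the argument of $g_m$ by $n\hbar$. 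On the second side, the left action first shifts $(x,k)\mapsto(x+En,k+Rn)$ inside $\phi(g_mU^m)$, so that the relevant $U$-index becomes $-R^{-1}(k+Rn)=-R^{-1}k-n$, matching $m$, and the argument of $g_m$ becomes $\Phi(x+En,k+Rn,\delta)\hbar=(\Phi(x,k,\delta)+n)\hbar$ by $\Lambda_0E+\Lambda_1R=\mid$, matching the $\Sh^n$-shifted argument. The remaining task is to show the accumulated powers of $q$ agree; this is a linearity-of-$N$ bookkeeping exercise, combining $q^{N(n,m)}$, $q^{-N(\Phi(x,k,\delta),R^{-1}k)}$ from $\phi$, and the factors $q^{N(\Phi(x,k,\delta),n)}$ and $q^{-N(\Phi(x+En,k+Rn,\delta),R^{-1}(k+Rn))}$ from the second side, using $N(\cdot,\cdot)$ bilinearity and $\Phi(x+En,k+Rn,\delta)=\Phi(x,k,\delta)+n$.

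For the bimodule statement, with $P=-R$ and $\epsilon=\delta$, I would similarly compare $\phi\paraa{(f_nU^n)\cdot(g_mU^m)}$ — now viewing $g_mU^m$ as acting on the right — with $\paraa{\phi(f_nU^n)\,g_mU^m}$ computed via the right action \eqref{eq:right.module.def}, where by Lemma~\ref{lemma:bimodule.parameters} one has $\Gamma_0=\Lambda_0$, $\Gamma_1=\Lambda_1-R^{-1}$, $F=\Lambda_0^{-1}\Lambda_1R$ and $P=-R$, so that $\Psi(x,k,\delta)=\Lambda_0x+(\Lambda_1-R^{-1})k+\delta=\Phi(x,k,\delta)-R^{-1}k$. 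The right action shifts $(x,k)\mapsto(x-Fn,k-Pn)=(x-Fn,k+Rn)$, and the key identity $\Psi(x-Fn,k+Rn,\delta)=\Psi(x,k,\delta)$ (equivalently $\Gamma_0F+\Gamma_1P=\mid$ rearranged, which is \eqref{eq:bimodule.cond.1}) together with $\Lambda_0(-F)+\Lambda_1R=\Lambda_0E+\Lambda_1R-\mid\cdot?$ — more precisely the condition $\Lambda_0F+\Lambda_1P=0$ from \eqref{eq:bimodule.cond.0} — makes the $\Phi$-argument inside $\phi$ transform correctly. Again the $U$-index matches because $-R^{-1}(k+Rn)=-R^{-1}k-n$.

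The main obstacle I anticipate is purely the $q$-power bookkeeping: one must carefully track several $N(\cdot,\cdot)$ contributions that involve the composite argument $\Phi(x,k,\delta)$ (which itself contains $k$, hence interacts with $R^{-1}k$ through $N$), and verify that all cross-terms cancel using only bilinearity of $N$ and the matrix relations $\Lambda_0E+\Lambda_1R=\mid$ (and, for the bimodule part, $\Lambda_0F+\Lambda_1P=0$, $\Gamma_0E+\Gamma_1R=0$). Everything else — well-definedness, linearity, the index-matching — is routine; the substantive content is confirming that these exponent sums collapse to equality, which is where the specific form of the cocycle correction $q^{-N(\Phi(x,k,\delta),R^{-1}k)}$ in the definition of $\phi$ is forced.
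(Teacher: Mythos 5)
Your approach matches the paper's: expand $\phi(f\cdot g)$ and $f\cdot\phi(g)$ (respectively $\phi(g\cdot f)$ and $\phi(g)\cdot f$) and verify equality term by term, using the index match $-R^{-1}(k+Rl)=-R^{-1}k-l$ and $\Phi(x+El,k+Rl,\delta)=\Phi(x,k,\delta)+l$ for the left part, and $\Phi(x-Fn,k-Pn,\delta)=\Phi(x,k,\delta)$ (from $\Lambda_0F+\Lambda_1P=0$) together with the relation $\Psi=\Phi-R^{-1}k$ for the right part. Two issues keep this from being a complete proof. First, the identity you flag as ``key'' in the bimodule part, namely $\Psi(x-Fn,k+Rn,\delta)=\Psi(x,k,\delta)$, is false: since $\Gamma_0F+\Gamma_1P=\mid$ and $P=-R$, one has $\Psi(x-Fn,k-Pn,\delta)=\Psi(x,k,\delta)-n=\Psi(x,k,\delta-n)$. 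Fortunately this mistaken identity plays no role; the argument inside $\phi(g)(x-Fn,k-Pn)$ is $\Phi$, not $\Psi$, and the invariance $\Phi(x-Fn,k-Pn,\delta)=\Phi(x,k,\delta)$ comes from $\Lambda_0F+\Lambda_1P=0$, which you do correctly invoke in your follow-up clause. Second, and more substantively, the entire $q$-exponent verification is deferred as ``bookkeeping.'' That is where the actual content of the proof lies: for the left part one must show that $q^{N(l,-R^{-1}k-l)}q^{-N(\Phi,R^{-1}k)}$ equals $q^{N(\Phi,l)}q^{-N(\Phi+l,R^{-1}k+l)}$, and for the bimodule part one needs a change of summation index $l=-R^{-1}k-n$ plus repeated use of $\Psi(x,k,\delta)+R^{-1}k=\Phi(x,k,\delta)$ to collapse the exponents. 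You correctly identify that the cocycle correction $q^{-N(\Phi(x,k,\delta),R^{-1}k)}$ in the definition of $\phi$ is forced by this cancellation, but without carrying out the computation the proposal remains an outline that still needs its core step supplied.
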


\begin{proof}
  Let us start by showing that $\phi$ is a left module
  homomorphism. To this end we compute
  \begin{align*}
    \phi&\paraa{f\cdot g}(x,k)
    = \sum_{n,l\in\integers^D}\phi\paraa{q^{N(l,n-l)}f_l(\Sh^lg_{n-l})U^n}(x,k)\\
    &= \sum_{l\in\integers^D}q^{-N(\Phi(x,k,\delta),R^{-1}k)}q^{N(l,-R^{-1}k-l)}f_l(\Phi(x,k,\delta)\hbar)
      g_{-R^{-1}k-l}(\Phi(x,k,\delta)\hbar+l\hbar),
  \end{align*}
  as well as
  \begin{align*}
    \paraa{f&\cdot\phi(g)}(x,k)
    = \sum_{l\in\integers^D}q^{N(\Phi(x,k,\delta),l)}
              f_l\paraa{\Phi(x,k,\delta)\hbar}\phi(g)(x+El,k+Rl)\\
            &= \sum_{l\in\integers^D}
              q^{N(\Phi(x,k,\delta),l)}
              f_l\paraa{\Phi(x,k,\delta)\hbar}
              q^{-N(\Phi(x+El,k+Rl,\delta),R^{-1}(k+Rl))}\times\\
            &\hspace{15mm}
              \times g_{-R^{-1}(k+Rl)}\paraa{\Phi(x+El,k+Rl,\delta)\hbar}\\
            &= \sum_{l\in\integers^D}q^{-N(\Phi(x,k,\delta),R^{-1}k)}q^{-N(l,R^{-1}k+l)}
              f_l\paraa{\Phi(x,k,\delta)\hbar}g_{-R^{-1}k-l}\paraa{\Phi(x,k,\delta)\hbar+l\hbar}\\
            &=\paraa{f\cdot\phi(g)}(x,k)
  \end{align*}
  by using that $\Phi(x+El,k+Rl,\delta)=\Phi(x,k,\delta)+l$. Hence,
  $\phi$ is a left module homomorphism.   Now,
  assume that $\S$ is a bimodule as defined in
  Proposition~\ref{prop:bimodule} with
  $R\in\GL_D(\integers)$, $P=-R$ and
  $\epsilon=\delta$. In this case, it follows from
  Lemma~\ref{lemma:bimodule.parameters} that
  \begin{align*}
    \Psi(x,k,\delta)=\Gamma_0x+\Gamma_1k+\delta
    =\Lambda_0x+\Lambda_1k-R^{-1}k+\delta=
    \Phi(x,k,\delta)-R^{-1}k.
  \end{align*}
  Let us now show that $\phi$ is also a right module homomorphism. One
  computes
  \begin{align*}
    \phi(g&\cdot f)(x,k)
            = \sum_{n,l\in\integers^D}\phi(q^{N(l,n-l)}g_l(\Sh^lf_{n-l})U^n)(x,k)\\
          &=\sum_{l\in\integers^D}q^{-N(\Phi(x,k,\delta),R^{-1}k)}q^{-N(l,R^{-1}k+l)}
            g_l\paraa{\Phi(x,k,\delta)\hbar}
            f_{-R^{-1}k-l}\paraa{\Phi(x,k,\delta)\hbar+l\hbar}
  \end{align*}
  as well as
  \begin{align*}
    \paraa{\phi(g)&\cdot f}(x,k)
                    = \sum_{n\in\integers^D}q^{N(\Psi(x,k,\delta-n),n)}
                    f_n(\Psi(x,k,\delta-n)\hbar)\phi(g)(x-Fn,k-Pn)\\
                  &=\sum_{n\in\integers}q^{N(\Psi(x,k,\delta-n),n)}
                    q^{-N(\Phi(x-Fn,k-Pn,\delta),R^{-1}(k-Pn))}
                    f_n(\Psi(x,k,\delta-n)\hbar)\times\\
                  &\hspace{20mm}\times
                    g_{-R^{-1}(k-Pn)}\paraa{\Phi(x-Fn,k-Pn,\delta)\hbar}
  \end{align*}
  which, by using that $\Lambda_0F+\Lambda_1P=0$ and $P=-R$, becomes
  \begin{align*}
    \sum_{n\in\integers}q^{N(\Psi(x,k,\delta-n),n)}
                    q^{-N(\Phi(x,k,\delta),R^{-1}k+n)}
    f_n(\Psi(x,k,\delta-n)\hbar)
    g_{-R^{-1}k-n}\paraa{\Phi(x,k,\delta)\hbar}
  \end{align*}
  By changing the summation index to $l=-R^{-1}k-n$, and using that
  $\Psi(x,k,\delta)+R^{-1}k=\Phi(x,k,\delta)$, one obtains
  \begin{align*}
    \sum_{l\in\integers^D}&
    q^{N(\Phi(x,k,\delta)+l,-R^{-1}k-l)}
    q^{-N(\Phi(x,k,\delta),-l)}
    f_{-R^{-1}k-l}(\Phi(x,k,\delta)\hbar+l\hbar)
    g_l\paraa{\Phi(x,k,\delta)\hbar}\\
                          &=\sum_{l\in\integers^D}
                            q^{N(\Phi(x,k,\delta),-R^{-1}k)}
                            q^{N(l,-R^{-1}k-l)}
                            f_{-R^{-1}k-l}(\Phi(x,k,\delta)\hbar+l\hbar)
                            g_l\paraa{\Phi(x,k,\delta)\hbar}\\
                          &=\phi(g\cdot f)(x,k),
  \end{align*}
  showing that $\phi$ is indeed a right module homomorphism under the
  above assumptions.
\end{proof}

\noindent
In principle, one can now show that if $\Lambda_0\in\GL_D(\complex)$,
then the homomorphism in Proposition~\ref{prop.algbra.homom.bimodule}
is in fact an isomorphism. However, there are a few technical
assumptions that one needs in order for the homomorphism to be
surjective. This statement is made precise in the next result.

\begin{proposition}\label{prop:bimodule.iso.free}
  Let $\S$ be a $\F$-invariant subspace such that
  \begin{enumerate}
  \item $\xi$ has compact support for all $\xi\in\S$,
  \item $\xi(Au+\lambda,n)\in\F$ for all $\xi\in\S$, $A\in\Mat_D(\complex)$,
    $\lambda\in\reals^D$ and $n\in\integers^D$,
  \item  $f(\Phi(x,k,\delta)\hbar)\in\S$, for all $f\in\F$ and
    $\delta\in\reals^D$.
  \end{enumerate}
  If $\Lambda_0\in\GL_D(\complex)$ then
  $\Shd(\Lambda_0,E,\Lambda_1,R)\simeq\AhqDF$ as a left
  module. Furthermore, if $\S$ is a
  $\Ahq^D(\F)$-$\Ahq^D(\F)$-bimodule, as defined in
  Proposition~\ref{prop:bimodule}, with $P=-R$ and $\epsilon=\delta$,
  then $\Shd(\Lambda_0,E,\Lambda_1,R)\simeq\AhqDF$ as a bimodule.
\end{proposition}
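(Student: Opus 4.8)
The plan is to show that the left-module homomorphism $\phi:\AhqDF\to\Shd(\Lambda_0,E,\Lambda_1,R)$ constructed in Proposition~\ref{prop.algbra.homom.bimodule} is a bijection, so that $\AhqDF\simeq\Shd(\Lambda_0,E,\Lambda_1,R)$ as left modules (and as bimodules once the extra bimodule hypotheses are in force). Injectivity should be the easy direction: if $\phi(f_nU^n)=0$ then for all $(x,k)$ one has $q^{-N(\Phi(x,k,\delta),R^{-1}k)}f_{-R^{-1}k}(\Phi(x,k,\delta)\hbar)=0$; since $R\in\GL_D(\integers)$ the assignment $k\mapsto -R^{-1}k$ ranges over all of $\integers^D$, and since $\Lambda_0\in\GL_D(\complex)$ the map $x\mapsto\Phi(x,k,\delta)\hbar$ is onto $\reals^D$ (for fixed $k$), so each Fourier-type coefficient $f_m$ vanishes identically; hence $f=0$.

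The substance is surjectivity, which is where assumptions (1)–(3) enter. Given $\xi\in\S$, I want to produce $f=f_nU^n\in\AhqDF$ with $\phi(f)=\xi$. Reading off \eqref{eq:hom.alg.module}, the natural guess is to define, for each $n\in\integers^D$, the function $f_n$ by inverting the substitution: set $k=-Rn$ so that $R^{-1}k=-n$, and then $\phi(f_nU^n)(x,-Rn)=q^{N(\Phi(x,-Rn,\delta),n)}f_n(\Phi(x,-Rn,\delta)\hbar)$ must equal $\xi(x,-Rn)$. Since $\Lambda_0$ is invertible, for fixed $n$ the affine map $x\mapsto \Phi(x,-Rn,\delta)\hbar = (\Lambda_0 x + \Lambda_1(-Rn)+\delta)\hbar$ is an invertible affine change of variables on $\reals^D$ with inverse of the form $u\mapsto \Lambda_0^{-1}(u/\hbar - \Lambda_1(-Rn) - \delta)$; so one is forced to define
\begin{align*}
  f_n(u) = q^{-N(u/\hbar,n)}\,\xi\paraa{\Lambda_0^{-1}\bracketa{u/\hbar+\Lambda_1 Rn-\delta},-Rn}.
\end{align*}
Assumption (2) is precisely what guarantees $f_n\in\F$ (it says the relevant affine-precomposed sections of $\S$ land in $\F$), and assumption (1) guarantees that $f_n=0$ for all but finitely many $n$ — since $\xi$ has compact support in the $\reals^D\times\integers^D$ sense, only finitely many "discrete slices" $\xi(\cdot,-Rn)$ are nonzero — so that $f=\sum_n f_nU^n$ is a genuine (finite) element of $\AhqDF$. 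Assumption (3) is what makes $\phi$ well-defined in the first place (already used in Proposition~\ref{prop.algbra.homom.bimodule}). Then one checks $\phi(f)(x,k)=\xi(x,k)$ for all $(x,k)$: writing $n=-R^{-1}k$, the definition of $f_n$ together with $\Phi(x,k,\delta)=\Lambda_0 x+\Lambda_1 k+\delta$ makes the cocycle prefactors cancel and the affine substitutions undo each other, yielding exactly $\xi(x,k)$.

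Finally, the bimodule statement: under the extra hypotheses $P=-R$ and $\epsilon=\delta$, Proposition~\ref{prop.algbra.homom.bimodule} already tells us the \emph{same} map $\phi$ is simultaneously a left- and right-module homomorphism, i.e. a bimodule homomorphism; since bijectivity has nothing to do with which side acts, the $\phi$ just shown to be a left-module isomorphism is automatically a bimodule isomorphism, and $\AhqDF$ with its standard left-right regular action is free of rank $1$ on each side. The main obstacle I anticipate is bookkeeping the cocycle $N$ through the affine substitution — one must verify that $q^{-N(\Phi(x,k,\delta),R^{-1}k)}$ in $\phi$ and the $q^{-N(u/\hbar,n)}$ baked into $f_n$ combine correctly after setting $u=\Phi(x,k,\delta)\hbar$ and $n=-R^{-1}k$, i.e. that $N(\Phi(x,k,\delta),R^{-1}k) = N(\Phi(x,k,\delta),-n)= -N(\Phi(x,k,\delta),n)$, so the two phase factors are mutually inverse — but this is just bilinearity of $N$ and should go through cleanly; likewise one should double-check that compact support of $\xi$ really does restrict the index $n$ to a finite set, using that $k\mapsto -R^{-1}k$ is a bijection of $\integers^D$.
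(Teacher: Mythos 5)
Your proof is correct and takes essentially the same route as the paper's. The paper simply writes down the explicit inverse
\begin{align*}
  \phi^{-1}(\xi)(u)=\sum_{n\in\integers^D}q^{-\frac{1}{\hbar}N(u,n)}\,
  \xi\paraa{\Lambda_0^{-1}(u/\hbar+\Lambda_1Rn-\delta),-Rn}\,U^n,
\end{align*}
which, by bilinearity of $N$, is exactly the $f_n$ you derived, and then invokes hypothesis (1) for finiteness of the sum and hypothesis (2) for $f_n\in\F$, precisely as you did.
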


\begin{proof}
  Let us prove the statements by showing that the homomorphism in
  Proposition~\ref{prop.algbra.homom.bimodule} is a left module
  (resp. bimodule) isomorphism under the given assumptions. Since
  Proposition~\ref{prop.algbra.homom.bimodule} shows that $\phi$ is a
  module homomorphism, it remains to prove that $\phi$ is invertible.

  For $\Lambda_0\in\GL_D(\complex)$ one can explicitly construct the
  inverse of $\phi$ as
  \begin{align*}
    \phi^{-1}(\xi)(u)=
    \sum_{n\in\integers^D}q^{-\tfrac{1}{\hbar}N(u,n)}
    \xi\paraa{\Lambda_0^{-1}(u/\hbar+\Lambda_1Rn-\delta),-Rn}U^n,
  \end{align*}
  and we note that the sum is finite since $\xi$ has compact support,
  by assumption. Moreover, the above function is clearly in $\F$ since
  one assumes that $\xi(Au+\lambda,n)\in\F$ for all $\xi\in\S$,
  $A\in\Mat_D(\complex)$, $\lambda\in\reals^D$ and $n\in\integers^D$.
\end{proof}

\noindent
Note that the hypotheses of Proposition~\ref{prop:bimodule.iso.free}
are fulfilled if, for instance, $\F$ and $\S$ consists of all
functions with compact support on their respective domains.

\section{Representations of shift subalgebras}\label{sec:rep.subalgebras}

\noindent
From this section on we are interested in certain subalgebras of
$\AhqDF$, and their representations in order to construct fuzzy
analogues of a large class of level sets. We are particularly
interested in finite dimensional representations for which the vector
space dimension increases as the parameter $\hbar$ for the algebra
decreases. Such sequences of representations can be associated with a
level set, i.e. a subset of the form
\begin{align*}
 \{(x_1,\ldots,x_n)\in\reals^n : f(x_1,\ldots,x_n)=0\} 
\end{align*}
where $f:\reals^n\to\reals$ is a real valued function. We will also
consider sequences of infinite dimensional representations with a
countable basis, where the parameter $\hbar$ decreases.

For example, in the case $D=1$, one can associate algebra elements
$\sum f_n(u)U^n$ with functions $\sum_nf_n(u)e^{in\varphi}$. 
Then, we shall consider subalgebras generated by two
elements of the form $f(u)U$ and $f(u)U^{-1}$ and the classical limit
of such a subalgebra is a parametrized surface defined by
\begin{equation}
  \begin{split}
    & x=f(u)\cos \varphi \\
    & y=f(u)\sin \varphi \\
    & z=u.
  \end{split}\label{eq:1d.commutative.limit}
\end{equation}
Since the coordinates $x$ and $y$ fulfill
\begin{align} \label{immersion_f}
  & x^2+y^2= f(z)^2,
\end{align}
such equations describe an infinitely long cylinder immersed into
$\reals^3$, when $f(u)$ is real. We will see that for functions
$f(u)$, which become $0$ at some points and where the level set
pinches off, it is possible to construct representations, which
restrict to an interval of $\reals$, where $f(u)\geq0$ for
$u_1 \leq u \leq u_2$ (where possibly $u_1=-\infty$ and/or
$u_2=+\infty$). The classical limit of the construction is an
immersion of a finite cylinder or an infinite half-cylinder in
$\reals^3$. Let us now start by defining the subalgebras we are
interested in. As a standing assumption, we assume that $\F$ separates
points of $\reals^D$.

\begin{definition}\label{def:shift.subalgebra}
  A $\ast$-subalgebra $\A\subseteq\AhqD(\F)$ such that $fa\in\A$ for every
  $f\in\F$ and $a\in\A$ is called \emph{shift subalgebra}.
\end{definition} 

\noindent
In what follows, recall that $\{\ket{n}\}_{n\in\integers^D}$ is a basis of
$\Ssh(\mid,\delta)$, which is also a (left) module for every subalgebra
of $\AhqDF$, and that the action of an algebra element is
given as
\begin{align} \label{module.action}
  f_nU^n\ket{k} = \sum_{n\in\integers^D}q^{N(k-n+\delta,n)}
  f_{n}\paraa{(k-n+\delta)\hbar}\ket{k-n}.
\end{align}
For a function $f\in\F$ and the generators $U^n$ this reduces to
\begin{align*}
   f\ket{k} = f\paraa{(k+\delta)\hbar}\ket{k}\qand
   U^n\ket{k} = q^{N(k-n+\delta,n)}\ket{k-n}.
\end{align*}
In the case when $\F$ separates points, one can show that cyclic
modules, generated by a shift subalgebra acting on $\ket{k}$,
are simple.

\begin{proposition} \label{prop:AhqDFan.simple.module}
  Assume that $\F$ separates points and let $\A$ be a
  shift subalgebra of $\AhqD(\F)$. For $k\in\integers^D$, let
  $\Shd(\ket{k})\subseteq\S_{\hbar}(\mid,\delta)$ denote the
  $\A$-module generated by $\ket{k}$. Then $\Shd(\ket{k})$ is a simple
  $\A$-module.
\end{proposition}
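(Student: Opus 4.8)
The plan is to show that any nonzero element of $\Shd(\ket{k})$ generates the whole module, using the point-separation property of $\F$ in the same spirit as the proof that $\Ssh(\mid,\delta)$ is simple. First I would note that $\Shd(\ket{k})$ is spanned by vectors of the form $a\ket{k}$ with $a\in\A$, and since $\A$ is a shift subalgebra, for any $f\in\F$ and $a\in\A$ we have $fa\in\A$; in particular, because $\A$ is a $\ast$-subalgebra containing at least one nonzero element, one should first pin down which basis vectors $\ket{l}$ actually occur in $\Shd(\ket{k})$. Let $J\subseteq\integers^D$ be the set of $l$ such that $\ket{l}$ appears (with nonzero coefficient) in some element of $\Shd(\ket{k})$; I would argue that in fact $\Shd(\ket{k})$ has $\{\ket{l}\}_{l\in J}$ as a vector space basis, because acting by $f\in\F$ on $\sum_l c_l\ket{l}$ multiplies the coefficient $c_l$ by $f((l+\delta)\hbar)$, and by separation of points one can isolate any single term (on any finite support); hence if $v\in\Shd(\ket{k})$ is nonzero then each individual $\ket{l}$ occurring in $v$ already lies in $\Shd(\ket{k})$.

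Next, let $v=\sum_{l}v_l\ket{l}\in\Shd(\ket{k})$ be an arbitrary nonzero element, with finite support $I=\{l: v_l\neq 0\}$. Pick $l_0\in I$. Since $\F$ separates points, for the finite set $\{(l+\delta)\hbar : l\in I\}$ there exists $f_0\in\F$ with $f_0((l_0+\delta)\hbar)=1$ and $f_0((l+\delta)\hbar)=0$ for all $l\in I\setminus\{l_0\}$ (this uses, as in the earlier simplicity proof in the excerpt, that separating points lets one build functions vanishing on a prescribed finite set and nonzero at one extra point — by taking suitable products and linear combinations of separating functions). Then $f_0 v = v_{l_0}\ket{l_0}$, so $\ket{l_0}\in\Shd(\ket{k})$ and, since $\A v$ is a submodule, $\ket{l_0}$ lies in the submodule generated by $v$. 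Thus every nonzero $v$ generates a submodule containing some basis vector $\ket{l_0}$.

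It then remains to show that $\Shd(\ket{k})$ is already generated (as an $\A$-module) by any single $\ket{l_0}\in\Shd(\ket{k})$. But $\ket{l_0} = g\ket{l_0}$ for a suitable $g\in\F$ (namely with $g((l_0+\delta)\hbar)=1$), and more to the point, $\ket{l_0}$ is obtained from $\ket{k}$ by applying some $a_0\in\A$ followed by an isolating function as above; conversely I would show $\ket{k}$ lies in the $\A$-submodule generated by $\ket{l_0}$. The cleanest route: by definition of $\Shd(\ket{k})$, write $\ket{l_0}$ as the "$\ket{l_0}$-component" of $a_0\ket{k}$; since $\A$ is a $\ast$-algebra and the module structure on $\Ssh(\mid,\delta)$ is such that $U^n$ acts invertibly (indeed $U^n\ket{k}=q^{N(k-n+\delta,n)}\ket{k-n}$ with unimodular coefficient, and $(U^n)^{-1}=U^{-n}\in\A$ whenever $U^n\in\A$), one can transport $\ket{l_0}$ back to $\ket{k}$ provided the relevant shift generator lies in $\A$. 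I expect the main obstacle is precisely this last point — a general shift subalgebra need not contain the pure shifts $U^n$, so "moving between basis vectors" must be done purely with the elements of $\A$ that were used to reach $\ket{l_0}$ from $\ket{k}$ in the first place. The resolution is to argue symmetrically: $\Shd(\ket{k})$ is, by construction, the span of $\{\ket{l} : l\in J\}$ with $J$ the orbit-like set of indices reachable from $k$; the isolating-function argument shows each $\ket{l}$ with $l\in J$ generates every element $a\ket{k}$ that has $\ket{l}$ in its support, and running the argument for the element $a_0\ket{k}$ that produced $\ket{l_0}$ recovers $\ket{k}$ itself up to a nonzero scalar, after isolating its $\ket{k}$-component — so the submodule generated by any nonzero $v$ contains $\ket{k}$, hence equals $\Shd(\ket{k})$. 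Therefore $\Shd(\ket{k})$ is simple.
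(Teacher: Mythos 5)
You correctly identify the strategy (show every nonzero $v$ is cyclic) and the isolating-function mechanism, and you correctly sense where the difficulty lies: pure shifts $U^n$ need not belong to $\A$, so one cannot simply invert the shift that carried $\ket{k}$ to $\ket{l_0}$. However, the ``resolution'' you then sketch is not a proof and in fact does not work. Your claim that ``each $\ket{l}$ with $l\in J$ generates every element $a\ket{k}$ that has $\ket{l}$ in its support'' is unjustified and false in general: the isolating argument produces $\ket{l}$ \emph{from} $a\ket{k}$, not the other way around, so it gives an inclusion pointing in the wrong direction. Likewise, ``running the argument for $a_0\ket{k}$ and isolating its $\ket{k}$-component'' does not recover $\ket{k}$ from the submodule generated by $\ket{l_0}$; at best it recovers $\ket{k}$ from $a_0\ket{k}$, which is useless, and $a_0\ket{k}$ may have no $\ket{k}$-component at all.

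The missing idea is to use the $\ast$-structure of $\A$ on the element $a_v$ itself rather than on a nonexistent pure shift. Writing $v=a_v\ket{k}$ with $a_v=\sum_{n\in I}a_{v,n}U^n\in\A$, choose $n_0\in I$ with $a_{v,n_0}\paraa{(k-n_0+\delta)\hbar}\neq 0$ and an isolating $p_1\in\F$ so that $p_1v$ is a nonzero scalar multiple of $\ket{k-n_0}$. Now apply $a_v^\ast$, which lies in $\A$ because $\A$ is a $\ast$-subalgebra. A direct computation shows that the $\ket{k}$-component of $a_v^\ast p_1v$ has coefficient proportional to $\abs{a_{v,n_0}\paraa{(k-n_0+\delta)\hbar}}^2\neq 0$. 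A second isolating function $p_2\in\F$ then projects onto $\ket{k}$, so $a=\lambda\, p_2\,a_v^\ast\, p_1\in\A$ (for the appropriate nonzero scalar $\lambda$) satisfies $av=\ket{k}$. This is exactly the step your sketch replaces with hand-waving, and it is the point where the hypotheses ``$\A$ is a $\ast$-subalgebra'' and ``$fa\in\A$ for $f\in\F$, $a\in\A$'' are actually used.
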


\begin{proof}
  Let us show that $\Shd(\ket{k})$ is a simple module by showing that
  every vector is cyclic. We achieve this by showing that for every
  $v\in\Shd(\ket{k})$, there exists $a\in\A$ such that $av=\ket{k}$.

  To this end, let $v\in\Shd(\ket{k})$ be an arbitrary (nonzero) element. Since
  the module is generated by $\ket{k}$, there exists $a_v\in\A$ such
  that $v=a_v\ket{k}$. Writing
  \begin{align*}
    a_v = \sum_{n\in I}a_{v,n}U^n
  \end{align*}
  with $I\subseteq\integers^D$ such that $|I|<\infty$, one obtains
  \begin{align*}
     v=\sum_{n\in I} q^{N(k-n+\delta,n)} a_{v,n} \paraa{(k-n+\delta)\hbar} \ket{k-n},
  \end{align*}
  and, since $v\neq 0$, there exists $n_0\in I$ such that
  $a_{v,n_0} \paraa{(k-n_0+\delta)\hbar}\neq 0$. Now, since $\F$ is
  assumed to separate points, there exist $p_1,p_2\in\F$ such that
  \begin{align*}
     &p_1\paraa{(k-m+\delta)\hbar} =
     \begin{cases}
        1 &\text{if } m \in I\text{ and }m=n_0 \\
        0 &\text{if } m \in I\text{ and } m\neq n_0
      \end{cases}\\
    &p_2\paraa{(k-n_0+m+\delta)\hbar} =
     \begin{cases}
        1 &\text{if } m \in I\text{ and }m=n_0 \\
        0 &\text{if } m \in I\text{ and } m\neq n_0
      \end{cases},
  \end{align*}
  giving
  \begin{align*}
    p_1v = q^{N(k-n_0+\delta,n_0)}a_{v,n_0}\paraa{(k-n_0+\delta)\hbar}\ket{k-n_0}.
  \end{align*}
  Since $\A$ is a $\ast$-subalgebra, it follows that $a_v^\ast\in\A$ and
  \begin{align*}
    &a_v^\ast p_1v = q^{N(k-n_0+\delta,n_0)}a_{v,n_0}\paraa{(k-n_0+\delta)\hbar}\times\\
    &\quad\qquad\qquad\times\sum_{n\in I}q^{-N(k-n_0+n+\delta,n)}\overline{a_{v,n}\paraa{(k-n_0+\delta)\hbar}}\ket{k-n_0+n}\\
    &p_2a_v^\ast p_1v = q^{-N(n_0,n_0)}|a_{v,n_0}\paraa{(k-n_0+\delta)\hbar}|^2\ket{k}.
  \end{align*}
  Thus, setting
  \begin{align*}
    a = q^{N(n_0,n_0)}|a_{v,n_0}\paraa{(k-n_0+\delta)\hbar}|^{-2}p_2a_v^\ast p_1,
  \end{align*}
  it follows that $av = \ket{k}$. Hence, we have
  shown that every $v\in\Shd(\ket{k})$ is cyclic and, consequently, that
  $\Shd(\ket{k})$ is a simple module.
  
\end{proof}

\noindent
Recall that if the function algebra $\F$ separates points, then
Proposition~\ref{prop:SshRd.not.free} implies that the module $\Shd(\ket{k})$ is not free. 
Furthermore, Proposition~\ref{prop:SshRd.projective}  implies that in the case
when $\F$ separates points,  the module $\Shd(\ket{k})$ is projective, if and only
if there exists $p_0\in\F$ such that
\begin{align*}
    p_0(u) =
 \begin{cases}
      1 &\text{if } u=\hbar\delta\\
      0 &\text{if } u\neq\hbar\delta
    \end{cases}
\end{align*}
In the case of shift subalgebras generated from a finite set 
of algebra elements, we shall see that for particular choices,
modules defined in Proposition~\ref{prop:AhqDFan.simple.module} might
be finite dimensional. Moreover, by carefully choosing $\hbar$ in
relation to the dimension of the representation, one can construct
fuzzy spaces, i.e. sequences of finite dimensional representations of
increasing dimension as $\hbar$ tends to zero.

\subsection{Shift subalgebras for $D=1$} \label{sec:shift.subalgebras.one.dimension}

\noindent
For $f\in\F$, we let $\A_{\hbar}(\F,f)$ denote the shift subalgebra
generated by
\begin{align*}
  A_+=f \left( u-\tfrac{1}{2}\hbar \right) U^{-1}\qand
  A_-= f \left( u+\tfrac{1}{2}\hbar \right) U.
\end{align*}
The generators $A_+$ and $A_-$ satisfy
\begin{align}
  & [A_+,A_-] = f(u-\tfrac{1}{2}\hbar)^2
    -f(u+\tfrac{1}{2}\hbar)^2 \label{A_com} \\
  & A_+A_- + A_-A_+ = f(u-\tfrac{1}{2}\hbar)^2
    +f(u+\tfrac{1}{2}\hbar)^2, \label{A_anticom} 
\end{align}
and the action of $A_+$ and $A_-$ is given by
\begin{align} \label{A+A-action}
  A_+ \ket{n}=f \left( (n+\tfrac{1}{2}+\delta)\hbar \right)\ket{n+1} \qand
  A_- \ket{n}= f \left( (n-\tfrac{1}{2}+\delta)\hbar\right) \ket{n-1}.
\end{align}

\begin{proposition}\label{prop:1d-cases}
  For $f\in\F$ and $k\in\integers$, let $\Shd(\ket{k})$ denote the
  simple left $\A_{\hbar}(\F,f)$-module defined in
  Proposition~\ref{prop:AhqDFan.simple.module}.
  \begin{enumerate}
  \item If $f\paraa{(N+\tfrac{1}{2}+\delta)\hbar}=0$ for some
    $N\in\integers$ and $f\paraa{(n+\tfrac{1}{2}+\delta)\hbar}\neq 0$ for
    all integers $n< N$ then
    \begin{align*}
      \Shd(\ket{N}) =\Big\{\sum_{n\leq N}v_n\ket{n}:
      v_n\in\complex\text{ and }|n:v_n\neq 0|<\infty \Big\}.
    \end{align*}
  \item If $f\paraa{(M-\tfrac{1}{2}+\delta)\hbar}=0$, for some
    $M\in\integers$, and $f\paraa{(n-\tfrac{1}{2}+\delta)\hbar}\neq 0$ for
    all integers $n>M$ then
    \begin{align*}
      \Shd(\ket{M}) =\Big\{\sum_{n\geq M}v_n\ket{n}:
      v_n\in\complex\text{ and }|n:v_n\neq 0|<\infty \Big\}.
    \end{align*}
  \item If $f\paraa{(M-\tfrac{1}{2}+\delta)\hbar}=0$ 
    and $f\paraa{(N+\tfrac{1}{2}+\delta)\hbar}=0$, for some 
    $M,N\in\integers$ with $M<N$, and $f\paraa{(n+\tfrac{1}{2}+\delta)\hbar}\neq 0$ for
    all integers $M\leq n \leq N$ then
    \begin{align*}
      \Shd(\ket{M}) = \Shd(\ket{N}) =\Big\{\sum_{M\leq n\leq N}v_n\ket{n}:
      v_n\in\complex\Big\}.
    \end{align*}\label{item:1d.cases.finite}
  \end{enumerate}
\end{proposition}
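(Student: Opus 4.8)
The plan is to analyze the orbit of $\ket{k}$ under the action of the shift subalgebra $\A_\hbar(\F,f)$, using the explicit action of $A_+$ and $A_-$ given in \eqref{A+A-action} together with the fact (Proposition~\ref{prop:AhqDFan.simple.module}) that the generated module is simple. The crucial observation is that since $\F$ separates points, for any finite set of lattice points one can find a function in $\F$ that is a "bump" separating a chosen point from the others; hence the module generated by $\ket{k}$ contains $\ket{m}$ as soon as $\ket{m}$ can be reached from $\ket{k}$ by a chain of applications of $A_\pm$ in which none of the relevant coefficients $f\big((j\pm\tfrac12+\delta)\hbar\big)$ vanishes. So the proof reduces to a careful bookkeeping of when these coefficients vanish, in each of the three cases.

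First I would record the basic reachability principle: if $v = \sum_n v_n\ket{n}$ lies in $\Shd(\ket{k})$ and $v_{n_0}\neq 0$ for some $n_0$, then by separating $n_0$ from the other indices with a suitable $p\in\F$ (as in the proof of Proposition~\ref{prop:AhqDFan.simple.module}) one gets $\ket{n_0}\in\Shd(\ket{k})$; and conversely, from $\ket{n_0}$ one reaches $\ket{n_0+1}$ precisely when $f\big((n_0+\tfrac12+\delta)\hbar\big)\neq 0$ (apply $A_+$), and $\ket{n_0-1}$ precisely when $f\big((n_0-\tfrac12+\delta)\hbar\big)\neq 0$ (apply $A_-$). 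Thus the set $\{n : \ket{n}\in\Shd(\ket{k})\}$ is exactly the connected component of $k$ in the graph on $\integers$ with an edge $n \sim n+1$ iff $f\big((n+\tfrac12+\delta)\hbar\big)\neq 0$. Moreover $\Shd(\ket{k})$ is the span (with finitely supported coefficients) of the $\ket{n}$ in that component: the span is clearly contained in the module, and it is a submodule since $A_\pm$ preserves it (a vanishing coefficient kills the would-be escaping term), so by simplicity it equals the module, provided it is nonzero — which it is, containing $\ket{k}$.

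With this principle in hand, the three cases become immediate combinatorics. In case (1), the hypothesis says the edge between $N$ and $N+1$ is absent (since $f\big((N+\tfrac12+\delta)\hbar\big)=0$) while all edges $n\sim n+1$ for $n<N$ are present; hence the component of $N$ is exactly $\{n : n\leq N\}$, giving the stated description with finitely supported coefficients. Case (2) is the mirror image: the edge between $M-1$ and $M$ is absent while all edges $n\sim n+1$ for $n>M$ (equivalently $f\big((n-\tfrac12+\delta)\hbar\big)\neq0$ for $n>M$) are present, so the component is $\{n: n\geq M\}$. In case (3), both the edge below $M$ and the edge above $N$ are absent, while all edges strictly between are present, so the component of $M$ (which then also contains $N$) is exactly the finite set $\{M,M+1,\ldots,N\}$ — here the coefficients are automatically finitely supported since the index set is finite, which is why the statement drops that qualifier.

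The only real subtlety — and the step I would treat most carefully — is verifying that the linear span of the $\ket{n}$ over the connected component is genuinely a submodule, i.e. that applying $A_+$ or $A_-$ (or a general algebra element $\sum a_{v,n}U^n$, or elements $f\in\F$) never produces a $\ket{m}$ outside the component. For $A_\pm$ this is exactly the definition of the edges; for a general monomial $f(u)U^n$ one notes $U^n\ket{m} \propto \ket{m-n}$ but such $U^n$ need not lie in the subalgebra $\A_\hbar(\F,f)$, so one should argue only with words in $A_+,A_-$ and elements of $\F$ (which act diagonally), and observe that every element of $\A_\hbar(\F,f)$ is an $\F$-linear combination of such words; since each word moves $\ket{m}$ by steps of $\pm1$ along present edges only, the span is preserved. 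Once that closure is established, simplicity (Proposition~\ref{prop:AhqDFan.simple.module}) forces equality of the submodule with $\Shd(\ket{k})$, completing all three cases.
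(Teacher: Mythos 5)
Your argument is correct and follows essentially the same route as the paper's, which writes out only case~(3) and notes the other two are analogous: identify precisely which of the coefficients $f\big((n\pm\tfrac12+\delta)\hbar\big)$ vanish, conclude that the $A_\pm$-ladder is cut at $M$ and $N$, and hence that the module is the span of $\ket{n}$ over the resulting interval. You are more explicit than the paper about the closure step (that every element of $\A_\hbar(\F,f)$, being an $\F$-linear combination of words in $A_\pm$, preserves that span), which the paper compresses into ``It follows that\ldots''; also note that your final appeal to simplicity is not actually needed once the span is shown to be a submodule containing the generator $\ket{k}$, since then it automatically coincides with $\A_\hbar(\F,f)\ket{k}=\Shd(\ket{k})$.
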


\begin{proof}
  Let us provide a proof of (\ref{item:1d.cases.finite}), since the
  other two cases are proved analogously. The assumptions immediately
  gives that
  \begin{align*}
    &A_+\ket{N} = f\paraa{(N+\tfrac{1}{2}+\delta)\hbar}\ket{N+1} = 0\\
    &A_-\ket{M} = f\paraa{(M-\tfrac{1}{2}+\delta)\hbar}\ket{M-1} = 0.
  \end{align*}
  Moreover,
  \begin{align*}
    A_+\ket{n} &= f\paraa{(n+\tfrac{1}{2}+\delta)\hbar}\ket{n+1}\neq 0
      \quad\text{ for }M\leq n<N\\
    A_-\ket{n} &= f\paraa{(n-\tfrac{1}{2}+\delta)\hbar}\ket{n-1}\\
               &= f\paraa{(n-1+\tfrac{1}{2}+\delta)\hbar}\ket{n-1}\neq 0
      \quad\text{ for }M< n\leq N
  \end{align*}
  since $f\paraa{(n+\tfrac{1}{2}+\delta)\hbar}\neq 0$ for $M\leq n \leq N$,
  by assumption. It follows that
  \begin{equation*}
    \Shd(\ket{M}) = \S_{\hbar}(\ket{N}) =\Big\{\sum_{M\leq n\leq N}v_n\ket{n}:
    v_n\in\complex\Big\}.\qedhere
  \end{equation*}
\end{proof}

\noindent
Note that Proposition~\ref{prop:AhqDFan.simple.module} implies that
the modules in the above result are simple. In the following we shall
mostly be interested in the type of finite dimensional representations
described by (\ref{item:1d.cases.finite}) in Proposition~\ref{prop:1d-cases}.

\begin{example}\label{ex:3d_rep} 
\noindent  Let $\F$ be the polynomial functions on $\reals$ and let
  $f(u)=\frac{9}{4}-u^2$. Let us now construct a three dimensional
  representation of $\A_{\hbar}(\F,f)$ with $\hbar=1$. Since 
  \begin{align*}
    f \parab{\hbar(1+\frac{1}{2})}=f \parab{\hbar(-1-\frac{1}{2})}=0
  \end{align*}
  we choose $\delta=0$, $M=-1$ and $N=1$ and $\S_{1}^0(\ket{1})$ is
  spanned by $\ket{-1},\ket{0}, \ket{1}$ according to
  Proposition~\ref{prop:1d-cases}.  One finds that
  \begin{align*}
    &A_+\ket{-1} = f(-1+\tfrac{1}{2})\ket{0} = 2\ket{0}\quad
      A_+\ket{0} = f(0+\tfrac{1}{2})\ket{1} = 2\ket{1}\\
    &A_-\ket{1} = f(1-\tfrac{1}{2})\ket{0} = 2\ket{0}\quad
      A_-\ket{0} = f(0-\tfrac{1}{2})\ket{-1} = 2\ket{-1}
  \end{align*}
  Identifying
  \begin{align*}
    \ket{-1} \sim
    \begin{pmatrix}
      1 \\ 0 \\ 0
    \end{pmatrix}
    , \quad \ket{0} \sim
    \begin{pmatrix}
      0 \\ 1 \\ 0
    \end{pmatrix}
    , \quad \ket{1} \sim
    \begin{pmatrix}
      0 \\ 0 \\ 1
    \end{pmatrix}
  \end{align*}
  gives the matrix representation
  \begin{align*}
    A_+ =	\begin{pmatrix}
      0 & 0  & 0 \\
      2 & 0 & 0 \\
      0 & 2 & 0 
    \end{pmatrix} \qquad  
              A_- =	\begin{pmatrix}
                0 & 2  & 0 \\
                0 & 0 & 2 \\
                0 & 0 & 0 
              \end{pmatrix}.
  \end{align*}
  Moreover, $g\in\F$ is represented as the diagonal matrix $\hat{g}$,
  given by
  \begin{align*}
    \hat{g}=
    \begin{pmatrix}
      g(-1) & 0  & 0 \\
      0 & g(0) & 0 \\
      0 & 0 & g(1) 
    \end{pmatrix}.               
  \end{align*}
\end{example}

\noindent
Let us now describe how one can construct a sequence of algebras
\begin{align*}
  \A_{\hbar_1}(\F,f), \A_{\hbar_2}(\F,f),\A_{\hbar_3}(\F,f)\ldots 
\end{align*}
together with simple finite dimensional modules of increasing
dimension, providing a ``fuzzy'' analogue of the surface defined by
\eqref{eq:1d.commutative.limit}. To this end, assume that $f\in\F$
such that $f(u_1)=f(u_2)=0$, where $u_1<u_2$, and $f(u)\neq 0$ for
$u_1<u<u_2$. Furthermore, let $N_1,N_2,\ldots$ be an increasing
sequence of positive integers and set
\begin{align} \label{hbar.inverse.N}
  \hbar_k = \frac{u_2-u_1}{N_k}.
\end{align}
Choosing $M_k\in\integers$ and $\delta_k\in[0,1)$ such that
\begin{align*}
  u_1 = \paraa{M_k-\tfrac{1}{2}+\delta_k}\hbar
\end{align*}
one checks that
\begin{align*}
  &f\paraa{(M_k-\tfrac{1}{2}+\delta_k)\hbar} = f(u_1) = 0\\
  &f\paraa{(M_k+N_k-1+\tfrac{1}{2}+\delta_k)\hbar} = f(u_2) = 0
\end{align*}
and Proposition~\ref{prop:1d-cases} implies that
$\S_{\hbar_k}^{\delta_k}(\ket{M_k})$ is a simple $N_k$-dimensional
$\A_{\hbar_k}(\F,f)$-module. Clearly, because of (\ref{hbar.inverse.N}),
$\hbar_k$ decreases as $N_k$ increases. 

\begin{example}\label{ex:1d.quadratic.odd}
  As an illustration of the procedure given above, let us construct a
  sequence of odd-dimensional modules related to the algebra given in
  Example \ref{ex:3d_rep}.  In the notation above, let $u_1=-3/2$,
  $u_2=3/2$ giving $u_2-u_1=3$. For a sequence of positive integers
  $N_1<N_2<\cdots$ we set $\hbar_k=3/N_k$, and find $M_k\in\integers$
  and $\delta_k\in[0,1)$ such that
  \begin{align*}
    u_1 = \paraa{M-\tfrac{1}{2}+\delta_k}\hbar\equivalent
    -\frac{3}{2} = \paraa{M_k-\tfrac{1}{2}+\delta_k}\frac{3}{N_k}
  \end{align*}
  which results in
  \begin{align*}
    \begin{cases}
      M_k=\frac{1-N_k}{2}, \, \delta_k=0, 	& \text{for } N_k \text{ odd} \\
      M_k=-\frac{N_k}{2}, \, \delta_k=\frac{1}{2}, 	& \text{for } N_k \text{ even}.
    \end{cases}
  \end{align*} 
  With these values, $\S_{\hbar_k}^{\delta_k}(\ket{M_k})$ is a simple $N_k$-dimensional
  $\A_{3/N_k}(\F,f)$-module for $k\geq 1$. The matrix elements of the
  operators $A_+,A_-$ are easily deduced from
  \begin{align*}
    &A_+\ket{n} = f\paraa{(n+\tfrac{1}{2}+\delta_k)\hbar_k}\ket{n+1}
      = \paraa{\tfrac{9}{4}-(n+\tfrac{1}{2}+\delta_k)^2\tfrac{9}{N_k^2})}\ket{n+1}\\
    &A_-\ket{n} = f\paraa{(n-\tfrac{1}{2}+\delta_k)\hbar_k}\ket{n-1}
      = \paraa{\tfrac{9}{4}-(n-\tfrac{1}{2}+\delta_k)^2\tfrac{9}{N_k^2})}\ket{n-1}.
  \end{align*}
  It is interesting that we see a ``spin'' in $\delta_k$,
  related to even and odd dimension of the module.
  
  In the classical limit, the algebra elements $A_+$ and $A_-$ 
  are identified with the complexified coordinate
  $x+iy$ and its conjugate. The coordinates $x$ and $y$ then fulfill
  \begin{align*} 
    & x^2+y^2= \left( \frac{9}{4}-u^2 \right)^2 
  \end{align*}
  where $u$ is restricted to the interval $[u_1=-\frac{3}{2},u_2=\frac{3}{2}]$. The resulting
  surface is spindle shaped with peaks at $u_1$ and $u_2$ (cf. Figure~\ref{fig:spindle}).
\end{example}

\begin{figure}[h]
  \centering
  \includegraphics[height=5cm]{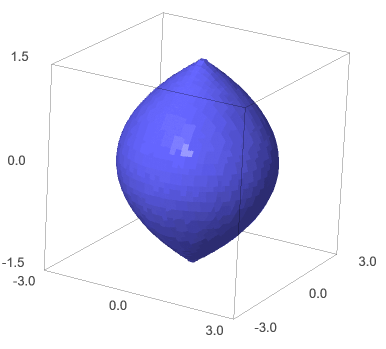}
  \caption{The level set related to the classical limit in
    Example~\ref{ex:1d.quadratic.odd}.}
  \label{fig:spindle}
\end{figure}

\noindent
In the following, we will provide several examples, in which the
generators have the form $A_i=\sqrt{f_n}U^n$ (no summation over $n$),
where $f_n\in\F$ is a \emph{real-valued} function. However, $f_n$ is
not assumed to be strictly positive and we shall use the convention
\begin{align*}
  \sqrt{x}=i \sqrt{|x|}, \, \text{for } x<0.
\end{align*}

\begin{example}[Fuzzy sphere]
\noindent We start with
\begin{align*}
	A_+=\sqrt{R^2-u(u-\hbar)}U^{-1}, \qquad	A_-=\sqrt{R^2-u(u+\hbar)}U
\end{align*}
where $R$ is a real positive number. 
With this definition 
\begin{align*}
	[A_+,A_-] =2\hbar u, \qquad \frac{1}{2} ( A_+A_- + A_-A_+) + 2u^2= 2 R^2,
 \end{align*}
in agreement with the defining relations of the fuzzy sphere.

On the representation $\Shd(\ket{k})$, this becomes
\begin{align*}
  &A_+\ket{n}=\sqrt{R^2-\hbar^2 (n+\delta+1)(n+\delta)}\ket{n+1}\\
  &A_-=\sqrt{R^2-\hbar^2 (n+\delta)(n+\delta-1)}\ket{n-1}.
\end{align*}
The parameters $R$, $\delta$ and $\hbar$ need not be interrelated for infinite
dimensional representations. However, if we demand that there exists
subrepresentations of $\Shd(\ket{k})$ according to Proposition \ref{prop:1d-cases},
then this puts restrictions on the parameters.

In particular, in the notation of Proposition \ref{prop:1d-cases} 
\begin{align*}
	f(u)=\sqrt{R_k^2-\paraa{u+\frac{\hbar}{2}}\paraa{u-\frac{\hbar}{2}}}
\end{align*}
where $k$ is a natural number labeling the representation. 
Requiring $A_+\ket{N_k}=\A_-\ket{M_k}=0$ gives
\begin{align*}
	N_k+\frac{1}{2}+\delta_k =  \pm \sqrt{\frac{R_k^2}{\hbar_k^2}-\frac{1}{4}}, \qquad
	M_k-\frac{1}{2}+\delta_k = \pm \sqrt{\frac{R_k^2}{\hbar^2}-\frac{1}{4}}
\end{align*}
For the choice of different signs of $N_k$ and $M_k$, one can add the two equations resulting in
\begin{align*}
	N_k+M_k+2\delta_k=0
\end{align*}
Since $N_k$ and $M_k$ are integers, $\delta_k$ is either $0$ or $\frac{1}{2}$. Furthermore,
we can set $N_k=k$ and $M_k=-k-2\delta_k$. If we set
\begin{align*}
	S_k =
	\begin{cases}
		 \frac{N_k}{2},  \,	& \text{for } N_k \text{ even} \\
	   	 \frac{N_k+1}{2}, \,  & \text{for } N_k \text{ odd} 
	\end{cases}
\end{align*} 
we arrive at the spin-$S_k$-representations of $su(2)$.
\end{example}

\begin{example}[Fuzzy catenoid]
\noindent  Analogously to the previous example, we start with
\begin{align*}
	A_+=\sqrt{R^2+u(u-\hbar)}U^{-1}, \qquad	A_-=\sqrt{R^2+(u+\hbar)u}U
\end{align*}
where $R>\hbar$ is again a real positive number. Due to the sign change
compared to the previous example, the subalgebra generated by these two
elements fulfills
\begin{align*}
	[A_+,A_-] =- 2\hbar u, \qquad \frac{1}{2} ( A_+A_- + A_-A_+) - u^2= R^2
 \end{align*}
 In the classical limit one can identify $A_+$ and $A_-$ with $x+iy$
 and $x-iy$ and the second equation becomes $x^2+y^2-u^2=R^2$. We can
 think of the subalgebra as a fuzzy version of a catenoid.  The
 representations are infinite dimensional, and one can choose
 arbitrary values for $R$, $\hbar$ and $\delta$.
\end{example}

\begin{example}[Fuzzy cone and fuzzy plane]\label{fuzzy_plane} 
  \noindent In the previous examples, one finds quadratic polynomials
  in the square root.  Here, we investigate a linear polynomial
\begin{align*}
	A_+=\sqrt{u+\hbar c}\,U^{-1}, \qquad	A_-=\sqrt{u+\hbar(c+1)}\,U
\end{align*}
where $c\in\reals$ is a constant. This results in a constant commutator
\begin{align*}
	[A_+,A_-] =- \hbar, \qquad \frac{1}{2} ( A_+A_- + A_-A_+) = u + \hbar (c+\frac{1}{2} )
\end{align*}
In the classical limit the second equation becomes $x^2+y^2=u$. This
is only solvable for $u\geq 0$.  In the representation
$\Shd(\ket{k})$, the action of the generators is
\begin{align*}
	A_+\ket{n}=\sqrt{\hbar(n+\delta+c+1)}\ket{n+1}, \qquad
	A_-=\sqrt{\hbar(n+\delta+c)}\ket{n-1}
\end{align*}
When restricting the representation $\Shd(\ket{k})$ to vectors with
$n\geq N$, where $N$ is an integer, it is necessary that
$\hbar(\delta+c)=N$. The non-integer part of the constant $c$ has to
compensate $\delta$.  Let us assume that $N=0$, since any other $N$
can be reached by redefining $n\rightarrow n-N$.  In the case
$\hbar(\delta+c)=0$, the representation is restricted to $n\geq 0$,
where $\hbar$ can be arbitrary, and the action of the generators
becomes
\begin{align*}
	A_+\ket{n}=\sqrt{\hbar(n+1)}\ket{n+1}, \qquad
	A_-=\sqrt{\hbar n}\ket{n-1}
\end{align*}
The representation restricted to $n\geq 0$ can be seen as a fuzzy version of a cone.
Note that a single cone with an opening angle of 180° is a plane. 
Since 
\begin{align*}
	\rho=\frac{1}{2} ( A_+A_- + A_-A_+) \ket{n}= \hbar(n+\frac{1}{2} ) \ket{n}
\end{align*}
we can identify $\rho$ with $r^2$, where $r$ is the distance to the
origin. In this way, the representation can be viewed as a fuzzy
plane.
\end{example}

\subsection{Shift subalgebras for $D=2$} \label{sec:shift.subalgebras.two.dimensions}

\noindent
In the case when $D=2$, the algebra $\Ahq^2(\F)$ is generated by $U_1,U_2$ together
with functions in an appropriate subalgebra
$\F\subseteq F(\reals^2,\complex)$. We write $u=u_1$ and $v=u_2$ and
consider the shift subalgebra generated by
\begin{align}
 	& U_+=f \left( u-\frac{\hbar}{2},v \right) U_1^{-1}, \qquad
			U_-= f \left( u+\frac{\hbar}{2},v \right) U_1 \label{U_pm_Def} \\
 	& V_+=g \left( u, v-\frac{\hbar}{2} \right) U_2^{-1}, \qquad
			V_-=g \left( u, v+\frac{\hbar}{2} \right) U_2 \label{V_pm_Def}.
\end{align}
for some $f,g\in\F$. In analogy with the one dimensional case, we
denote the shift subalgebra of $\Ahq^2(\F)$ generated by these generators and by 
$f\in\F$ by $\Ahq^2(\F; f, g)$.

The generators of $\Ahq^2(\F; f, g)$ fulfill
  \begin{align*}
	& U_+U_- + U_-U_+ = f^2(u-\frac{\hbar}{2},v)+f^2(u+\frac{\hbar}{2},v)  \\   
	& V_+V_- + V_-V_+ = g^2(u, v-\frac{\hbar}{2})+g^2(u, v+\frac{\hbar}{2}),  
  \end{align*}
  which can be seen as defining
  relations of a fuzzy space. Namely, 
  in the classical limit, when one identifies $U \sim x+iy$ ,
  $V \sim a+ib$, these relations become
\begin{align} 
	x^2+y^2 = f^2(u,v), \qquad a^2+b^2=g^2(u,v)\label{manifold_rel}
\end{align}
Thus, the algebra $\Ahq^2(\F; f, g)$ can be seen as a fuzzy version of a
(generically four dimensional) level set immersed in
$\reals^6$, defined by \eqref{manifold_rel}.

The representation $\Shd(\ket{k})$ of $\Ahq^2(\F)$ has basis vectors
$\ket{n,m}$, where $n,m\in\integers$.  As in the one dimensional case,
it is possible to construct irreducible
representations of the subalgebra $\Ahq^2(\F; f, g)$ by choosing $f$
and $g$ such that their zero loci "cut out" a part of
the $\integers^2$-lattice.  We will show some examples of this in the
following.

\begin{example} 
  \noindent In this example, we shall construct a subalgebra of $\Ahq^2(\F)$
  with finite dimensional representations.  In particular, let $N$ be
  a natural number and let the generators of the subalgebra be defined
  by
\begin{align*}
  & U_+= \sqrt{ \left(u-\hbar \delta_1 \right)  \left(\hbar (\delta_1+\delta_2 + N + 1 )-u-v\right) }
    \tilde{f} \left( u ,v \right) U_1^{-1} \\
  & U_-= \sqrt{ \left(u-\hbar (\delta_1+1) \right)  \left(\hbar (\delta_1+\delta_2 + N )-u-v\right) }
    \tilde{f} \left( u ,v \right) U_1 \\
  & V_+= \sqrt{ \left(v- \hbar \delta_2 \right)  \left(\hbar (\delta_1+\delta_2 + N + 1 )-u-v \right) }
    \tilde{g} \left( u, v \right) U_2^{-1} \\
  & V_-= \sqrt{ \left(v- \hbar \delta_2 \right)  \left(\hbar (\delta_1+\delta_2 + N  )-u-v \right) }
    \tilde{g} \left( u, v \right) U_2,
\end{align*}
where $\tilde{f},\tilde{g}\in\F$ denote arbitrary \emph{positive} functions.
The action of the above generators is given by
\begin{align*}
 	& U_+\ket{n,m} = \hbar \sqrt{(n+1)(N-n-m)}\
			\tilde{f} \paraa{ \hbar(n+1+\delta_1),\hbar(m+\delta_2) } \ket{n+1,m} \\
 	&  U_-\ket{n,m} = \hbar \sqrt{n (N-1 -n-m)}\
			\tilde{f} \paraa{ \hbar(n+\delta_1),\hbar(m+\delta_2) } \ket{n-1,m} \\
 	& V_+\ket{n,m} =  \hbar \sqrt{(m+1)(N-n+m)}\
			\tilde{g} \paraa{\hbar(n+\delta_1),\hbar(m+1+\delta_2) } \ket{n,m+1}  \\
 	 & V_-\ket{n,m} =  \hbar \sqrt{m(N-1-n-m)}\
			\tilde{g} \paraa{ (\hbar(n+\delta_1),\hbar(m+\delta_2) } \ket{n,m-1}.
\end{align*}
The important observations are now that $U_-\ket{0,m}=0$,
$V_-\ket{n,0}=0$ and that
$U_+\ket{\hat{n},\hat{m}}=V_+\ket{\hat{n},\hat{m}}=0$ for
$\hat{n}+\hat{m}=N$.  One sees that when one starts with the
generating vector $\ket{0, 0}$ and applies a monomial in $U_+$, $V_+$,
$U_-$ and $V_-$, it is not possible to leave the part of the lattice
defined by $n \geq 0$, $m \geq 0$ and $n+m \leq N$.  Thus, the
irreducible representation generated from $\ket{0, 0}$ is finite
dimensional and has dimension $\frac{1}{2}(N+1)(N+2)$. For the
classical limit, where $N\rightarrow\infty$, one assumes that
$\hbar N=C$ for some $C\in\reals$, i.e. that when $N$ increases then
$\hbar$ decreases. This is analogous to the fuzzy sphere, where the
assumption that the radius should be constant also interrelates
$\hbar$ with the size of the representation $N$.

In the classical limit, the algebra relations give two equations, which define a
four dimensional level set in $\reals^6$ as follows 
\begin{align*}
	x^2+y^2= u(C-u-v) \tilde{f}^2, \qquad a^2+b^2=v(C-u-v) \tilde{g}^2.
\end{align*}
For instance, when $\tilde{f}=1$, a section through the level set at
constant $v$ is a sphere with radius $(C-v)/2$.
\end{example}

\begin{example}
\noindent Let us now construct a subalgebra with infinite dimensional 
representations. The generators of the subalgebra of $\Ahq^2(\F)$ are
\begin{align*}
 	& U_+= \sqrt{ \left(u-\hbar \delta_1 \right) \left(v- \hbar \delta_2 \right) }
						\tilde{f} \left( u ,v \right) U_1^{-1} \\
 	& U_-= \sqrt{ \left(u-\hbar(1+ \delta_1) \right) \left(v- \hbar \delta_2 \right) }
						\tilde{f} \left( u ,v \right) U_1 \\
 	& V_+= \sqrt{ v-\hbar \delta_2 }
						\tilde{g} \left( u, v \right) U_2^{-1} \\
 	& V_-= \sqrt{ (v-\hbar (\delta_2+1) }
						\tilde{g} \left( u, v \right) U_2
\end{align*}
Again, $\tilde{f},\tilde{g}\in\F$ denote arbitrary positive functions.
We will now show that the irreducible representation generated by
$\ket{0,0}$ is restricted to $\{\ket{n,m}:n\geq 0, \, m\geq 0 \}$.  To
this end, let us calculate the action of the generators
\begin{align*}
 	& U_+\ket{n,m} = \hbar \sqrt{(n+1)m}\
			\tilde{f} \paraa{ \hbar(n+1+\delta_1),\hbar(m+\delta_2) } \ket{n+1,m} \\
 	&  U_-\ket{n,m} = \hbar \sqrt{n m}\
			\tilde{f} \paraa{ \hbar(n+\delta_1),\hbar(m+\delta_2) } \ket{n-1,m} \\
 	& V_+\ket{n,m} =  \sqrt{\hbar(m+1)}\
			\tilde{g} \paraa{\hbar(n+\delta_1),\hbar(m+1+\delta_2) } \ket{n,m+1}  \\
 	 & V_-\ket{n,m} =  \sqrt{\hbar m}\
			\tilde{g} \paraa{ (\hbar(n+\delta_1),\hbar(m+\delta_2) } \ket{n,m-1}.
\end{align*}
It follows that $U_-\ket{0,m}=0$ and that for $n>0$,
$U_-\ket{n,m}\neq 0$ . Vice versa, for $n\geq 0$,
$U_+\ket{n,m}\neq 0$.  In the same way, $V_-\ket{n,0}=0$ and for $m>0$
$V_-\ket{n,m}\neq 0$. Vice versa, for $m\geq 0$ $V_+\ket{n,m}\neq 0$.
Starting from $\ket{0,0}$, every vector $\ket{n,m}$ for $n\geq 0$ and
$m\geq 0$ is reachable by applying $U_+$ and $V_+$. On the other hand,
the vectors $\ket{n,m}$ for $n<0$ and $m<0$ are not reachable from
$\ket{0,0}$, since it is not possible to move outside the lines $n=0$
and $m=0$ in $\integers^2$. Therefore, the representation
$\Shd(\ket{k})$ is infinite dimensional and has basis vectors
$\ket{n,m}$ with $n,m\geq0$.

In the classical limit the algebra relations become
\begin{align*}
x^2+y^2=uv\tilde{f}(u,v), \qquad a^2+b^2=v\tilde{g}(u,v),
\end{align*}
defining a non-compact level set in $\reals^6$.
Note that the restriction to $n\geq 0$ and $m\geq 0$ in
the representation restricts the coordinates in the classical limit to
$u\geq 0$ and $v\geq 0$.  This example is particularly interesting,
since if one sets $u=r+z$, $v=r-z$ and $\tilde{f}=1$, then, in the
classical limit, the algebra relations become
\begin{align*}
x^2+y^2+z^2=r^2, \qquad a^2+b^2=(r-z)\tilde{g}(r,z).
\end{align*}
Since $r=\frac{1}{2}(u+v)$, one may interpret $r$ as a radius since it
is non-negative because of $u\geq 0$ and $v\geq 0$. Additionally, we
can parametrize the coordinates $a$ and $b$ with a compact parameter
$\tau$:
\begin{align*}
  a=\sqrt{(r-z)\tilde{g}(r,z)}\cos\tau, \qquad
  b=\sqrt{(r-z)\tilde{g}(r,z)}\sin\tau
\end{align*}
Thus, the level set can be parametrized by $\reals^3\times S^1$, where
$S^1$ is a circle with radius $\sqrt{(r-z)\tilde{g}(r,z)}$, vanishing
when $r=z$.
\end{example}

\section{Representations of Lie algebras} \label{sec:shift.subalgebras.higher.dimensions}

\noindent
In this section we show that, for any finite dimensional Lie algebra
$\g$, one can define a shift subalgebra where the generators satisfy
the commutation relations of $\g$. Consequently, one may construct
irreducible representations $\Shd(\ket{k})$ of $\g$ by using
Proposition~\ref{prop:AhqDFan.simple.module}.  Moreover, the Lie
algebra generators in the shift subalgebra provide a natural set of
inner derivations, suggesting the interpretation of the shift
subalgebra as a ``noncommutative homogeneous space''.  A lot of
literature exists, in which such fuzzy homogeneous spaces are
constructed. For instance, it is possible to use coherent states
\cite{GP1993}, highest weight representations of Lie algebras
\cite{Alexanian_2002}, \cite{Balachandran_2002} or subspaces of Fock
space representations \cite{Grosse_1996}, \cite{Grosse_1999}. In this
section, we illustrate another way of arriving at such fuzzy
homogeneous spaces.

We start with a shift algebra $\A_{\hbar,q}^D(\F)$, and assume in this
section that $q=1$. For notational convenience, we write
$S_i=S_{\hbar,i}$ (cf. Section~\ref{sec:shift.algebras}) such that
\begin{align*}
  (S_if)(u_1,\ldots,u_D) = f(u_1,\ldots,u_i+\hbar,\ldots,u_D).
\end{align*}

\begin{lemma} \label{deformed_suD} Let $\A_{\hbar,1}^D$ be a shift
  algebra and let $f_i\in\F$, for $i=1,\ldots,D$,
  such that $S_kf_i=f_i$ for $k\neq i$. Then the shift subalgebra
  generated by 
  \begin{align*}
    E_{ij}=f_i U_i^{-1} U_j f_j\qquad\quad (i,j=1,\ldots,D)
  \end{align*}
  (no summation over $i$ and $j$) fulfills the commutator relations
  \begin{align} \label{Eij_comm}
    [E_{ij},E_{kl}]=\delta_{jk}\Delta_j E_{il}-\delta_{il} \Delta_i E_{kj}
  \end{align}
  where $\Delta_i=S_i ( f_i ^2 )  - f_i^2$. 	
\end{lemma}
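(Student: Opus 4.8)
The plan is to establish the relation by a direct computation in the shift algebra $\A^D_{\hbar,1}(\F)$, exploiting that $q=1$ makes the generators $U_1,\dots,U_D$ commute and that the hypothesis on the $f_i$ kills almost all of the shifts that arise. The two rules I would use throughout are $U_mU_n=U_nU_m$ for all $m,n$, and $U_m^{\pm1}g=(S_m^{\pm1}g)U_m^{\pm1}$ for $g\in\F$, both of which are immediate from the defining relations of $\A^D_{\hbar,1}(\F)$. The hypothesis $S_mf_i=f_i$ for $m\neq i$ then says that $S_m$ and $S_m^{-1}$ act trivially on $f_i$, on $f_i^2$ and on $\Delta_i$ whenever $m\neq i$; this is the observation that collapses all the shift operators appearing below.

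First I would bring a general product to normal-ordered form, with all functions to the left and all $U$-generators to the right, by repeatedly applying the second rule:
\begin{align*}
  E_{ij}E_{kl}=f_i U_i^{-1}U_j(f_jf_k)U_k^{-1}U_lf_l
  =f_i\,\bracketa{S_i^{-1}S_j(f_jf_k)}\,\bracketa{S_i^{-1}S_jS_k^{-1}S_lf_l}\,U_i^{-1}U_jU_k^{-1}U_l,
\end{align*}
where the $U$-word may be reordered at will. Using the triviality of the off-diagonal shifts, the function prefactor simplifies, and, more importantly, the $U$-word shortens exactly when indices coincide: if $j=k$ then $U_jU_j^{-1}$ cancels and only $U_i^{-1}U_l$ survives, while if $i=l$ then only $U_jU_k^{-1}$ survives. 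I would then split into cases. If $j\neq k$ and $i\neq l$: comparing the normal forms of $E_{ij}E_{kl}$ and $E_{kl}E_{ij}$ shows they coincide, so the commutator vanishes, in agreement with $\delta_{jk}=\delta_{il}=0$. If $j=k$ and $i\neq l$: the computation gives $E_{ij}E_{jl}=S_j(f_j^2)E_{il}$ and $E_{jl}E_{ij}=f_j^2E_{il}$ (after writing $E_{il}$ itself in normal form to make the identification), so $[E_{ij},E_{jl}]=\paraa{S_j(f_j^2)-f_j^2}E_{il}=\Delta_jE_{il}$; the case $i=l$, $j\neq k$ is symmetric and gives $E_{ij}E_{ki}=f_i^2E_{kj}$, $E_{ki}E_{ij}=S_i(f_i^2)E_{kj}$, hence $[E_{ij},E_{ki}]=-\Delta_iE_{kj}$. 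For $j=k$ and $i=l$ at once, one combines the two contributions, $E_{ij}E_{ji}=S_j(f_j^2)E_{ii}$ and $E_{ji}E_{ij}=S_i(f_i^2)E_{jj}$, and uses $E_{ii}E_{jj}=f_i^2f_j^2=f_j^2f_i^2=E_{jj}E_{ii}$ to rewrite the difference as $\Delta_jE_{ii}-\Delta_iE_{jj}$.

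The argument is mechanical, so the only real obstacle is the bookkeeping of the various ways indices can coincide. In addition to the ``main'' coincidences $j=k$ and $i=l$, one has to handle sub-cases in which a further index agrees (for instance $i=j$, or $j=k=l$), where a shift operator that was trivial now acts nontrivially and the normal form of the relevant $E_{il}$ or $E_{ii}$ must be recomputed; keeping the single general normal-ordering formula above and specializing the indices to it, rather than restarting the manipulation in each case, is the cleanest way to keep this under control.
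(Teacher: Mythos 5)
Your plan follows the paper's proof step for step: normal-order each product $E_{ij}E_{kl}$ using $U_m^{\pm1}g=(S_m^{\pm1}g)U_m^{\pm1}$, use the hypothesis $S_mf_i=f_i$ for $m\neq i$ to kill almost every shift that arises, and then compare the two orderings case by case according to which of $\delta_{jk}$, $\delta_{il}$ are nonzero. This is the same argument as the paper's, just organized a bit more systematically around a single general normal-ordering identity.

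Where I would push back is on your closing remark that the remaining sub-cases with extra index coincidences can be dispatched by ``specializing the indices'' in that identity. In fact the stated formula \eqref{Eij_comm} fails in exactly those sub-cases, and this is a gap that your write-up shares with the paper's own proof (whose case list also never includes them). Take $j=k$ together with $i=j$, so the commutator is $[E_{ii},E_{il}]$ with $l\neq i$. Since $E_{ii}=f_iU_i^{-1}U_if_i=f_i^2$, one has $E_{ii}E_{il}=f_i^2E_{il}$, whereas moving $f_i^2$ to the left through $E_{il}=f_iU_i^{-1}U_lf_l$ produces a genuine shift: $E_{il}E_{ii}=(S_i^{-1}f_i^2)E_{il}$. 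Hence $[E_{ii},E_{il}]=(f_i^2-S_i^{-1}f_i^2)E_{il}=(S_i^{-1}\Delta_i)E_{il}$, not $\Delta_iE_{il}$: the shift $S_i^{-1}$ that you drop in the generic case because $i\neq j$ is precisely the one that survives here, and $S_i^{-1}\Delta_i\neq\Delta_i$ in general. A concrete check with $D=2$, $\hbar=1$, $f_1=u_1$, $f_2=1$ gives $\Delta_1=2u_1+1$ but $[E_{11},E_{12}]=u_1(2u_1-1)\,U_1^{-1}U_2$, whereas $\Delta_1E_{12}=u_1(2u_1+1)\,U_1^{-1}U_2$. The discrepancy is invisible in the paper's application (Proposition~\ref{lie_alg}), because there $f_i=\sqrt{u_i+c_i}$ makes $\Delta_i=\hbar$ a constant and $S_i^{-1}\Delta_i=\Delta_i$; but as a statement for general $f_i$, the lemma needs either this extra hypothesis or a corrected right-hand side. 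So if you carry out your own plan literally, this is the sub-case where ``specializing the normal-ordering formula'' does not return the answer you are trying to prove, and it deserves to be surfaced rather than deferred.
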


\begin{proof}
By direct calculation one shows that for $i\neq j \neq k \neq l$
\begin{align*}
	& E_{ij} E_{kl} =E_{kl} E_{ij}   \implies [ E_{ij}, E_{kl} ]=0 \\
	&  E_{ii} E_{jj} = E_{jj} E_{ii} \implies [E_{ii}, E_{jj}]=0 \\
	& E_{ij} E_{kj} =E_{kj} E_{ij} \implies [E_{ij}, E_{kj}]=0 \\
	& E_{ij} E_{ik} =E_{ik} E_{ij}  \implies [E_{ij}, E_{ik}]=0 \\
	& E_{ij} E_{jk} =  S_j ( f_j^2) E_{ik}, \qquad E_{jk}E_{ij}  =  f_j^2 E_{ik}
		  \implies  [ E_{ij}, E_{jk} ] =  \left( S_j ( f_j^2)  - f_j^2 \right)  E_{ik}\\
	&  E_{ij} E_{jj} =  S_j ( f_j^2) E_{ij}, \qquad   E_{jj} E_{ij} =  f_j^2 E_{ij} 
		 \implies [E_{ij}, E_{jj}] = \left( S_j ( f_j^2 )  - f_j^2 \right)  E_{ij} \\
	& E_{ij} E_{ji} =    f_i^2 \, S_j ( f_j^2 ) = S_j ( f_j^2 )E_{ii} \\
	& \implies [E_{ij}, E_{ji}] = \left( S_j ( f_j^2)  - f_j^2 \right)  E_{ii}
							- \left( S_i ( f_i^2 )  - f_i^2 \right)  E_{jj}
\end{align*}	
These relations can be summarized into (\ref{Eij_comm}).
\end{proof}

\noindent
If $\Delta_i$ is constant and independent of $i$, then in Lemma \ref{deformed_suD},
the commutator relations (\ref{Eij_comm})
reduce to the Lie algebra relations of $u(D)$.  In this case, the shift subalgebra generated by the
$E_{ij}$ is an enveloping algebra of $u(D)$ (but not the universal one).
Setting $\Delta_i=\hbar$ can be achieved by choosing $f_i=\sqrt{u_i+c}$, for
arbitrary $c\in\reals$. In the following we show that this can be
generalized to any finite dimensional Lie algebra $\g$.

\begin{proposition}\label{lie_alg}
  Let $\A_{\hbar,1}^D(\F)$ be a shift algebra such that
  $\sqrt{u_i+c_i}\in\F$ for $c_i\in\reals$ and $i=1,\ldots,D$, and let
  $\g$ denote a finite dimensional Lie algebra with basis
  $\{X^a\}_{a=1}^n$ such that $[X^a, X^b]= f \indices{^{ab}_c} X^c$.
  Furthermore, let $(X^a_{ij})$ denote the matrices
  of a $N$-dimensional representation of $\g$.
  Then the elements
  \begin{align*}
    \hat{X}^a = \sum_{i,j=1}^N X^a_{ij} \sqrt{u_i+c_i} U_i^{-1} U_j \sqrt{u_j+c_j}
  \end{align*}
  satisfy $[\hat{X}^a,\hat{X}^b]=\hbar {f^{ab}}_c\hat{X}^c$ for $a,b=1,\ldots,n$.
\end{proposition}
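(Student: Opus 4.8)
The plan is to reduce the statement to Lemma~\ref{deformed_suD}. First I would set $f_i=\sqrt{u_i+c_i}$ for $i=1,\dots,D$, which lies in $\F$ by hypothesis and depends only on the variable $u_i$; hence $S_kf_i=f_i$ for $k\neq i$, so the hypotheses of Lemma~\ref{deformed_suD} are met. Writing $E_{ij}=f_iU_i^{-1}U_jf_j$ (no summation) as in that lemma, the key observation is that
\[
  \Delta_i=S_i(f_i^2)-f_i^2=(u_i+\hbar+c_i)-(u_i+c_i)=\hbar ,
\]
which is constant and, crucially, independent of $i$. Therefore \eqref{Eij_comm} specialises to
\[
  [E_{ij},E_{kl}]=\hbar\paraa{\delta_{jk}E_{il}-\delta_{il}E_{kj}}
\]
for all $i,j,k,l$. (Here I tacitly take $N\le D$, so that the indices $1,\dots,N$ of the representation all occur among $1,\dots,D$; if necessary one enlarges $D$, which does not affect the subalgebra generated by the $\hat X^a$.)

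Next I would write $\hat X^a=\sum_{i,j=1}^N X^a_{ij}E_{ij}$ and expand the commutator bilinearly, substituting the relation above:
\[
  [\hat X^a,\hat X^b]=\sum_{i,j,k,l}X^a_{ij}X^b_{kl}[E_{ij},E_{kl}]
  =\hbar\sum_{i,j,k,l}X^a_{ij}X^b_{kl}\paraa{\delta_{jk}E_{il}-\delta_{il}E_{kj}} .
\]
Performing the contractions — $\sum_j X^a_{ij}X^b_{jl}=(X^aX^b)_{il}$ in the first term, and $\sum_i X^b_{ki}X^a_{ij}=(X^bX^a)_{kj}$ in the second (after relabelling the summed index) — collapses this to
\[
  [\hat X^a,\hat X^b]=\hbar\sum_{i,l}\paraa{(X^aX^b)_{il}-(X^bX^a)_{il}}E_{il}
  =\hbar\sum_{i,l}\paraa{[X^a,X^b]}_{il}E_{il}.
\]
Finally, since $(X^a_{ij})$ are the matrices of a representation of $\g$, one has $[X^a,X^b]=f\indices{^{ab}_c}X^c$ as matrices, i.e.\ $\paraa{[X^a,X^b]}_{il}=f\indices{^{ab}_c}X^c_{il}$; substituting this back gives $[\hat X^a,\hat X^b]=\hbar f\indices{^{ab}_c}\sum_{i,l}X^c_{il}E_{il}=\hbar f\indices{^{ab}_c}\hat X^c$, which is the claimed identity.

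The bulk of the content sits in Lemma~\ref{deformed_suD}; once the relations $[E_{ij},E_{kl}]=\hbar(\delta_{jk}E_{il}-\delta_{il}E_{kj})$ are in hand, this is just the standard observation that the matrix units span a copy of $\mathfrak{gl}_N$ (here rescaled by $\hbar$) acting on itself, so that any matrix Lie algebra $\g\hookrightarrow\mathfrak{gl}_N$ maps to a Lie subalgebra. The only points requiring a little care are verifying that $f_i=\sqrt{u_i+c_i}$ is exactly the choice making $\Delta_i$ both constant and $i$-independent, keeping the dummy indices straight in the double contraction, and the minor bookkeeping about the index ranges $N$ versus $D$; I do not anticipate any genuine obstacle.
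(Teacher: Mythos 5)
Your proposal is correct and follows the paper's own route: choose $f_i=\sqrt{u_i+c_i}$, check $S_kf_i=f_i$ for $k\neq i$ and $\Delta_i=\hbar$, specialize Lemma~\ref{deformed_suD} to $[E_{ij},E_{kl}]=\hbar(\delta_{jk}E_{il}-\delta_{il}E_{kj})$, and contract with $X^a_{ij}X^b_{kl}$. The only difference is that you spell out the bilinear expansion and index contraction that the paper compresses into ``it follows immediately from \eqref{Eij_comm},'' and you correctly flag the implicit assumption $N\le D$ needed for the generators $U_1,\dots,U_N$ to exist.
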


\begin{proof}
  In the notation of Lemma~\ref{deformed_suD}, one finds
  that $\Delta_i=u_i+\hbar+c_i-u_i-c_i=\hbar$. Defining
  \begin{align} \label{suD_gen}
     E_{ij}=\sqrt{u_i+c_i} U_i^{-1} U_j \sqrt{u_j+c_j}
  \end{align}
  and setting
  \begin{align*}
    \hat{X}^a = \sum_{i,j=1}^N X^a_{ij} E_{ij},
  \end{align*}
  it follows immediately from (\ref{Eij_comm}) that
  \begin{equation*}
    [\hat{X}^a, \hat{X}^b] = \hbar f \indices{^{ab}_c} \hat{X}^c. \qedhere
  \end{equation*}
\end{proof}

\noindent
Thus, Proposition~\ref{lie_alg} implies that the shift subalgebra
generated by $\{\hat{X}^a\}_{a=1}^n$ is an enveloping algebra of the Lie algebra $\g$.

Let us now present the main result of this section, showing that one
can construct irreducible representations of an arbitrary finite
dimensional Lie algebra by considering representations of shift
subalgebras.

\begin{proposition}\label{lie_alg_rep}
  Let $\g$ be a $n$-dimensional Lie algebra, and let $\A$ be the shift
  subalgebra generated by $\{\hat{X}^a\}_{a=1}^n$, as defined in
  Proposition~\ref{lie_alg}. For $n=(n_1,\ldots,n_D)\in\integers^D$,
  the simple $\A$-module $\Shd(\ket{n})$
  (cf. Proposition~\ref{prop:AhqDFan.simple.module}) is an irreducible
  representation of $\g$. Moreover, 
  \begin{align*}
    \Shd(\ket{n_1, \ldots, n_D}) \subseteq
    \operatorname{span}_{\complex}\{\ket{k_1, \ldots, k_D}: k_1+ \cdots +k_D=n_1+ \cdots +n_D\}.
  \end{align*}
  If $\hbar\delta_i+c_i = 0$ and $n_i\geq 0$ for $i=1,\ldots,D$ then
  $\Shd(\ket{n_1, \ldots, n_D})$ is finite dimensional.
\end{proposition}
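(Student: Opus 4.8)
The plan is to obtain the irreducibility from the general theory already in place and to read off the two structural statements from a direct computation of the action of the generators on the basis $\{\ket{k}\}_{k\in\integers^D}$ of $\Ssh(\mid,\delta)$. For the irreducibility itself, I would recall that by Proposition~\ref{lie_alg} the elements $\hat X^a$ satisfy $[\hat X^a,\hat X^b]=\hbar\,{f^{ab}}_c\hat X^c$, so the rescaled operators $\hbar^{-1}\hat X^a$ realize the bracket relations of $\g$; hence every module over the shift subalgebra $\A$ generated by $\{\hat X^a\}$ is a representation of $\g$, and $\A$ is an enveloping algebra of $\g$ (as noted right after Proposition~\ref{lie_alg}). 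Since $\F$ separates points, Proposition~\ref{prop:AhqDFan.simple.module} applies and gives that $\Shd(\ket{n})$ is a simple $\A$-module, which is exactly the assertion that it is an irreducible representation of $\g$.

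Next I would compute, using $q=1$, the identities $U^m\ket{k}=\ket{k-m}$ and $f\ket{k}=f\paraa{(k+\delta)\hbar}\ket{k}$ from \eqref{module.action}, the action of the building blocks $E_{ij}=\sqrt{u_i+c_i}\,U_i^{-1}U_j\sqrt{u_j+c_j}$. Writing $e_i\in\integers^D$ for the $i$-th standard basis vector, one gets $E_{ii}\ket{k}=\paraa{(k_i+\delta_i)\hbar+c_i}\ket{k}$ and, for $i\neq j$, $E_{ij}\ket{k}=\sqrt{(k_i+1+\delta_i)\hbar+c_i}\,\sqrt{(k_j+\delta_j)\hbar+c_j}\,\ket{k+e_i-e_j}$, the only point requiring care being that $\sqrt{u_i+c_i}$ is evaluated after the shift produced by $U_i^{-1}U_j$, hence at $k_i+1$ rather than $k_i$. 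Since the image of $\ket{k}$ under each $E_{ij}$, and therefore under each $\hat X^a=\sum_{i,j}X^a_{ij}E_{ij}$, is supported on indices with the same coordinate sum $k_1+\cdots+k_D$, and since the remaining generators of $\A$ — the functions $f\in\F$ — act diagonally, the submodule $\Shd(\ket{n})$ is contained in $\operatorname{span}_\complex\{\ket{k}:k_1+\cdots+k_D=n_1+\cdots+n_D\}$, which is the ``moreover'' statement.

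For the finite-dimensional case I would impose $\hbar\delta_i+c_i=0$ and $n_i\geq 0$ for all $i$. Then the formula above becomes $E_{ij}\ket{k}=\hbar\sqrt{(k_i+1)k_j}\,\ket{k+e_i-e_j}$ for $i\neq j$ and $E_{ii}\ket{k}=\hbar k_i\ket{k}$; the crucial feature is that the coefficient vanishes whenever $k_j=0$, so no generator can lower the $j$-th coordinate of the index below zero. Setting $K=n_1+\cdots+n_D$ and $V_K=\operatorname{span}_\complex\{\ket{k}: k_i\geq 0\text{ for all }i,\ k_1+\cdots+k_D=K\}$, a space of dimension $\binom{K+D-1}{D-1}<\infty$, I would check that each $\hat X^a$, and multiplication by each $f\in\F$, maps $V_K$ into itself: in the off-diagonal terms either $k_j\geq 1$, so all coordinates of $k+e_i-e_j$ stay non-negative, or the coefficient vanishes. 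Hence $\A$ preserves $V_K$, and since $\ket{n}\in V_K$ we conclude $\Shd(\ket{n})\subseteq V_K$, which is finite-dimensional.

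I do not expect a genuine obstacle here: the irreducibility is essentially handed over by Proposition~\ref{prop:AhqDFan.simple.module}, and the remaining work is the bookkeeping of the shifts in the computation of $E_{ij}\ket{k}$, together with the observation that for $i=j$ this expression degenerates to the diagonal multiplication operator $u_i+c_i$. The mildly delicate point is getting the argument of the square roots right in that computation, since everything else follows formally.
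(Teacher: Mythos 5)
Your plan mirrors the paper's proof in all three parts, and your formula $E_{ij}\ket{k}=\sqrt{(k_i+1+\delta_i)\hbar+c_i}\,\sqrt{(k_j+\delta_j)\hbar+c_j}\,\ket{k+e_i-e_j}$, the conservation of the index sum $k_1+\cdots+k_D$, and the cut-off produced by the vanishing of $\sqrt{(k_i+1)k_j}$ once $\hbar\delta_i+c_i=0$ all match the paper's computation exactly, including the observation that the resulting simplex-shaped index set is finite.

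The irreducibility step, however, contains a genuine logical gap. You write that being a simple $\A$-module ``is exactly the assertion'' that $\Shd(\ket{n})$ is an irreducible $\g$-representation. That is not an equivalence. By Definition~\ref{def:shift.subalgebra}, a shift subalgebra must satisfy $fa\in\A$ for every $f\in\F$, so $\A$ strictly contains the associative algebra generated by the $\hat X^a$ alone: it also carries all of the multiplication operators by functions. A subspace invariant under all $\hat X^a$ need not be invariant under multiplication by $\F$, so $\g$-irreducibility is a priori the \emph{stronger} of the two properties; $\A$-simplicity gives you the implication in the wrong direction. This is not a vacuous worry, because the proof of Proposition~\ref{prop:AhqDFan.simple.module} obtains cyclicity precisely by inserting $p_1,p_2\in\F$ to isolate a basis vector, and those are exactly the operators that sit outside the image of $U(\g)$. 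The paper's own proof does not conflate the two: it appeals to the claim that every vector is cyclic under the $\hat X^a$ themselves, which is the right intermediate statement to aim for, even though the paper leaves its verification implicit. To genuinely close the argument you would want to show that the separating functions used there can be replaced by polynomials in the $\hat X^a$; on a module where $\sum_i k_i$ is fixed, this can be arranged when the diagonal operators $E_{ii}$ (which give a weight decomposition with one-dimensional weight spaces) belong to the algebra generated by the $\hat X^a$ together with the scalar $\sum_i E_{ii}$, as happens for instance in the $su(D)$ examples.
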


\begin{proof}
  We know from Proposition 4.2 that $\Shd(\ket{n_1, \ldots, n_D})$ is
  a simple $\A$-module. Moreover, there it is
  shown that every vector is cyclic, which is also the case for the
  $\hat{X}^a$, which represent the Lie algebra basis
  elements. Therefore, $\Shd(\ket{n_1, \ldots, n_D})$ is also simple
  (or irreducible) as a representation of the Lie algebra $\g$.

  Furthermore, the sum $k_1+k_2+\cdots+ k_D$ is constant under the
  action of the $E_{ij}$ of (\ref{suD_gen}). Namely, for $i\neq j$ it follows that
  \begin{align*}
    & E_{ij} \ket{k_1, \ldots, k_D} = \sqrt { \hbar(k_i+\delta_i +1)+c_i)(\hbar(k_j+\delta_j)+c_j) }
			 \ket{k_1, \ldots, k_i+1, \ldots, k_j-1, \ldots, k_D}  \\
    & E_{ii} \ket{k_1, \ldots, k_D} =  \paraa{ \hbar(k_i+\delta_i)+c_i } \ket{k_1, \ldots, k_D}  
  \end{align*}
  Consequently, the sum $k_1+\cdots+k_D$ is also constant under the
  action of an arbitrary element in the shift subalgebra, since the
  shift subalgebra is generated by the $\hat{X}^a$, which are linear
  combinations of the $E_{ij}$.

  Assuming that $\hbar\delta_i+c_i=0$,  the action
  of  the $E_{ij}$ becomes
  \begin{align*}
    & E_{ij} \ket{k_1,\dots,k_D}= 
      \hbar \sqrt{\paraa{k_i+1} k_j } \ket{k_1,\dots,k_i+1,\dots,k_j-1,\dots, k_D}  \\
    & E_{ii} \ket{k_1,\dots,k_D}= 
      \hbar k_i \ket{k_1,\dots,k_i,\dots, k_D}
  \end{align*}
  From this follows that if $n_i\geq 0$, the module
  $\Shd(\ket{n_1, \ldots, n_D})$ is restricted to a subset of the
  basis elements $\ket{k_1,\ldots, k_D}$ with $k_i\geq 0$, since the
  factor $\sqrt{\paraa{k_i+1} k_j }$ for the operators $E_{ij}$ with
  $i\neq j$ becomes $0$, when applied to a vector with $k_j=0$.  In
  the case $\hbar\delta_i+c_i=0$ and $n_i\geq 0$ this means that
  $\Shd(\ket{n_1, \ldots, n_D})$ is a subspace of the vector space
  spanned by $\ket{k_1, \ldots, k_D}$ where
  $k_1+ \cdots +k_D=n_1+ \cdots +n_D$ and $k_i\geq 0$. These
  $\{k_1, \ldots, k_D\}$ form a finite subset of $\integers^D$.

  The shift subalgebra $\A$ generated by the $\hat{X}^a$ is a subalgebra of
  the shift subalgebra generated by the $E_{ij}$ , since the $\hat{X}^a$ are linear combinations
  of the $E_{ij}$.  Thus, because the module $\Shd(\ket{n_1, \ldots, n_D})$ 
  is finite dimensional for the shift subalgebra generated by the $E_{ij}$, this is
  also the case for $\A$.
\end{proof}

\noindent
It is an interesting observation that the hyperplane in $\integers^D$, which is defined 
by $k_1+ \cdots +k_D=N$ with $N$ some integer forms a triangular lattice.
The condition $k_i\geq 0$ cuts out a regular $D$-simplex from this hyperplane.

Let us end with two examples as an illustration of the above results.

\begin{example}[$su(2) \simeq so(3)$] 
  We explore the simplest non-trivial example, the Lie algebra $su(2)$ and its
  fundamental representation, which is provided by the Pauli matrices
  \begin{align*}
    \sigma_{1}=\left(\begin{array}{cc} 0 & 1\\ 1 & 0 \end{array}\right),
                                                   \quad
                                                   \sigma_{2}=\left(\begin{array}{cc} 0 & -i\\ i & 0 \end{array}\right),\quad
                                                                                                   \sigma_{3}=\left(\begin{array}{cc} 1 & 0\\ 0 & -1 \end{array}\right)
  \end{align*}
  This is a two-dimensional representation, implying that the subalgebra
  defined in Proposition~\ref{lie_alg} is a shift subalgebra of
  $\A_{h,1}^2(\F)$. The generators are given by
  \begin{align*}
    & X_{1}= \sqrt{u_1+c_1} U_1^{-1} U_2 \sqrt{u_2+c_2} + \sqrt{u_2+c_2} U_2^{-1} U_1 \sqrt{u_1+c_1}  \\
    & X_{2}=-i\sqrt{u_1+c_1} U_1^{-1} U_2 \sqrt{u_2+c_2} + i\sqrt{u_2+c_2} U_2^{-1} U_1 \sqrt{u_1+c_1} \\
    & X_{3}=u_1+c_1-u_2-c_2
  \end{align*}
  We know from Proposition \ref{lie_alg_rep} that the representation $\Shd(\ket{n_1, n_2})$ is restricted at least to the vectors
  $\ket{k, N-k}$ with $N=n_1+n_2$. The operators become
  \begin{align*}
    & A_+\ket{k, N-k} = \sqrt{\paraa{\hbar(k+1+\delta_1)+c_1}{\hbar(N-k+\delta_2)+c_2}} \ket{k+1, N-k-1} \\
    & A_-  \ket{k, N-k} = \sqrt{  \paraa{\hbar(N-k+1+\delta_2)+c_2} \paraa{\hbar(k+\delta_1)+c_1}} \ket{k-1, N-k+1} \\
    & Z \ket{k, N-k} = \paraa{\hbar(k+\delta_1)+c_1-\hbar(N-k+\delta_2)-c_2} \ket{k, N-k}
  \end{align*}
where we have introduced the ladder operators
  \begin{align*}
     A_+=\frac{1}{2}(X+iY), \quad A_- = \frac{1}{2i}(X-iY), \quad Z=X_{3}
  \end{align*}
  Depending on $k$, the expressions below the square roots can be positive and negative.
  As shown in Proposition \ref{lie_alg_rep} we see that a restricted representation
  is only possible, when the expression below the square root
  can become $0$ for some $k$, which is only possible, when the $c_i$ compensate
  the $\delta_i$. Let us therefore assume that $\hbar\delta_i+c_i=0$. We then arrive at
  \begin{align*}
    & A_+\ket{k, N-k} = \hbar \sqrt{\paraa{k+1}\paraa{N-k}} \ket{k+1, N-k-1} \\
    & A_- \ket{k, N-k} = \hbar \sqrt{ \paraa{N-k+1} k} \ket{k-1, N-k+1} \\
    & Z \ket{k, N-k}=\hbar(2k-N) \ket{k, N-k}
  \end{align*}
  The expressions below the square roots are positive for $0<k<N$ otherwise they are negative.
  Furthermore, $A_+ \ket{N, 0}=0$ and $A_- \ket{0, N}=0$. The irreducible representations
  are the representations $\Shd(\ket{N, 0})$ and are up to a shift the representations
  of the fuzzy sphere.
\end{example}

\begin{example}[$su(D)$]
  To generalize the previous example, for $su(D)$ we can directly use
  the algebra elements $E_{ij}$ from Proposition \ref{lie_alg_rep}.
  From this proposition we know that the representation
  $\Shd(\ket{n_1, \ldots, n_D})$ can be restricted to basis vectors
  $\ket{k_1, \ldots, k_D}$ with $k_1+\dots+k_D=n_1+\dots+n_D=N$. Since
  for $k_i>0$, the factors under the square root of the $E_{ij}$ are
  all greater than $0$, the representation sweeps out
  the complete set $k_1+\dots+k_D=N$ restricted to $k_i>0$. For the
  same $N$ the representations are equivalent. For every $N$,
  one obtains an irreducible representation of $su(D)$.

  The operators $E_{ii}$ in the representation are linearly dependent. They
  correspond to the diagonal generator $Z$ of the previous example. 
  Since the factors under the square root of the operators $E_{ij}$ with $i\neq j$
  are all non-negative for all $k_1,\dots, k_D$ with $k_i \geq 0$ it follows that
  the operators $E_{ij}$ are conjugate to the operators $E_{ji}$.
  These operators correspond to the conjugate generators $A_+$ and $A_-$
  of the previous example. 

  The representations generated in this way are usually denoted by
  $[N, 0, \dots, 0]$, where each of the numbers in the square brackets
  indicates a highest weight vector of $su(D)$. It is known that these
  representations are fuzzy complex projective spaces;
  see e.g. \cite{Alexanian_2002,Balachandran_2002}.
\end{example}

\section*{Acknowledgments}

\noindent
J.A. is supported by the Swedish Research Council grant 2017-03710.

\bibliographystyle{alpha}
\bibliography{references}

\end{document}